\theoremstyle{plain}
\newtheorem{theorem}{Theorem}[section]
\newtheorem{example}{Example}[section]
\newtheorem{lemma}[theorem]{Lemma}
\newtheorem{proposition}[theorem]{Proposition}
\newtheorem{corollary}[theorem]{Corollary}
\newtheorem{remark}[theorem]{Remark}
\theoremstyle{definition}
\theoremstyle{remark}
\numberwithin{equation}{section}
\newcommand{\T}{\mathcal{T}}
\newcommand{\Fzero}{\mathscr{F}_0}
\newcommand{\B}{\mathcal{B}}
\newcommand{\ho}{\A^{\mathrm{hom}}}
\newcommand{\sub}{\subset}
\newcommand{\ep}{\varepsilon}
\newcommand{\e}{\varepsilon}
\newcommand{\ffi}{\varphi}
\newcommand{\R}{\mathbb{R}}
\newcommand{\N}{\mathbb{N}}
\newcommand{\Z}{\mathbb{Z}}
\newcommand{\E}{\mathfrak{E}}
\newcommand{\AS}{\mathcal{AF}}
\newcommand{\A}{\mathbb{A}}
\newcommand{\di}{\textrm{dist}}
\newcommand{\ud}{\;\mathrm{d}}
\newcommand{\Om}{\Omega}
\newcommand{\supp}{\mathrm{supp}\,}
\newcommand{\M}{\mathcal{M}}
\newcommand{\AF}{\mathcal{AF}}
\newcommand{\Huno}{\mathcal H^1}
\newcommand{\weakly}{\rightharpoonup}           
\newcommand{\weakstar}{\stackrel{*}{\weakly}}   
\newcommand{\fla}{\stackrel{\mathrm{flat}}{\rightarrow}}
\newcommand{\flt}{\mathrm{flat}}
\newcommand{\rad}{\mathcal Rad}
\newcommand{\sr}{\mathcal{R}}
\newcommand{\flap}{{\mathrm{flat}}}
\newcommand{\Ccal}{{\mathcal{C}}}
\newcommand{\cu}{\mathrm{curl}\;}
\newcommand{\Fs}{\mathscr{F}}
\newcommand{\precrw}{F_{\delta_\varepsilon}}
\newcommand{\pregl}{G\!L_{\ep}}
\newcommand{\precr}{\mathscr{F}_{\ep}}
\newcommand{\FF}{F}
\newcommand{\con}{\mathcal{G\!L}_{\ep}}
\def\red#1{{#1}}
\def\Xint#1{\mathchoice
   {\XXint\displaystyle\textstyle{#1}}%
   {\XXint\textstyle\scriptstyle{#1}}%
   {\XXint\scriptstyle\scriptscriptstyle{#1}}%
   {\XXint\scriptscriptstyle\scriptscriptstyle{#1}}%
   \!\int}
\def\XXint#1#2#3{{\setbox0=\hbox{$#1{#2#3}{\int}$}
     \vcenter{\hbox{$#2#3$}}\kern-.5\wd0}}
\def\dashint{\Xint-}
\newcommand{\newatop}{\genfrac{}{}{0pt}{1}}
\def\@splitop#1#2\@nil{$\mathscr{#1}\!\!$\calligra#2\,\,}
\newcommand*\DeclareCursiveOperator[2]{%

 \newcommand#1{\mathop{\mbox{\@splitop#2\@nil}}\nolimits}}
\DeclareCursiveOperator{\Anew}{A}
\DeclareCursiveOperator{\Bnew}{B}
\DeclareCursiveOperator{\Cnew}{C}
\DeclareCursiveOperator{\Dnew}{D}
\DeclareCursiveOperator{\Enew}{E}
\DeclareCursiveOperator{\Qnew}{Q}
\title[Topological Singularities in Periodic Media]{Topological singularities in periodic media:  Ginzburg-Landau and core-radius approaches}
\author[R. Alicandro]
{R. Alicandro}
\address[Roberto Alicandro]{DIEI, Universit\`a di Cassino e del Lazio meridionale, via Di Biasio 43, 03043 Cassino (FR), Italy}
\email[R. Alicandro]{alicandr@unicas.it}
\author[A. Braides]{A. Braides}
\address[Andrea Braides]{Dipartimento di Matematica, Universit\`a di Roma `Tor Vergata', via della Ricerca Scientifica, 00133 Rome, Italy}
\email[A. Braides]{braides@mat.uniroma2.it}
\author[M. Cicalese]{M. Cicalese}
\address[Marco Cicalese]{Zentrum Mathematik - M7, Technische Universit\"at M\"unchen, Boltzmannstrasse 3, 85747 Garching, Germany}
\email[M. Cicalese]{cicalese@ma.tum.de}
\author[L. De Luca]
{L. De Luca}
\address[Lucia De Luca]{Istituto per le Applicazioni del Calcolo ``M. Picone'', IAC-CNR, via dei Taurini 19, 00185 Rome, Italy}
\email[L. De Luca]{lucia.deluca@cnr.it}
\author[A. Piatnitski]
{A. Piatnitski}
\address[Andrey Piatnitski]{The Arctic University of Norway, campus Narvik,
P.O. Box 385, 8505,
Narvik, Norway and Institute for Information Transmission Problems of RAS, Bolshoy Karetny per., 19, Moscow, Russia, 127051}
\email[A. Piatnitski]{apiatni@iitp.ru}
\begin{document}
\begin{abstract}
We describe the emergence of topological singularities in periodic media within the Ginzburg-Landau model and the core-radius approach.
The energy functionals of both models are denoted by $E_{\ep,\delta}$, where $\ep$ represent the coherence length (in the Ginzburg-Landau model) or the core-radius size (in the core-radius approach) and  $\delta$ denotes the periodicity scale.
We carry out the $\Gamma$-convergence analysis of $E_{\ep,\delta}$  as $\e\to 0$ and $\delta=\delta_\e\to 0$ in the $|\log\e|$ scaling regime, showing that the $\Gamma$-limit consists in the energy cost of finitely many vortex-like point singularities of integer degree. After introducing the scale parameter (upon extraction of subsequences)
$$
\lambda=\min\Bigl\{1,\lim_{\e\to0} {|\log \delta_\e|\over|\log\e|}\Bigr\},
$$
we show that in a sense we always have a separation-of-scale effect: at scales less than $\e^\lambda$ we first have a concentration process around some vortices whose location is subsequently optimized, while for scales larger than $\e^\lambda$ the concentration process takes place ``after'' homogenization.



\vskip5pt
\noindent
\textsc{Keywords}: Ginzburg-Landau Model; Core-Radius Approach; Topological Singularities;
Homogenization;  $\Gamma$-convergence. 
\vskip5pt
\noindent
\textsc{AMS subject classifications:}  
49J45   
74Q05 
35Q56 
49J10   
74B15   
\end{abstract}
\maketitle
\tableofcontents
\section*{Introduction}
Phase transitions mediated by the formation of topological defects characterize several physical
phenomena such as superfluidity, superconductivity and plasticity (see \cite{Lo1, Lo2, Mermin, HL, Kle_Lav, HB}). The study of such topological defects has become an extremely active research field in mathematics after the progresses achieved in the analysis of the Ginzburg Landau (GL) energy functional in the last decades (see e.g.~\cite{BBH, SS2}). In \cite{ACP} it has been proved that the GL functional, originally introduced to model the phenomenology of phase transitions in Type-II superconductors through the formation of vortex singularities of a complex order parameter, provides a good variational description for the emergence of vortices in $XY$ spin systems and of screw dislocations in crystal plasticity (see also \cite{AC, P, ADGP, DL, BCKO}).  The results obtained in \cite{ACP} suggest to exploit the GL theory for a phenomenological alternative description of several material-dependent variational models,
opening the way to a number of new mathematical problems involving the analysis of this functional. For instance, in the modeling of materials, one needs to suitably modify it to include the usual
kinematic constraints and material constants which are specific of crystal structures. As a first step in this direction, here we study a variant of the GL energy functional to include heterogeneities of the medium.

\par

Before describing the case of heterogeneous media, we briefly recall the analysis in the homogeneous case. Let $\Omega\subset\R^{2}$ be an open bounded set and let $\e$ denote the {\em coherence length} of the GL energy (proportional to the length scale of the core of a screw dislocation in a plastic crystal or to the lattice spacing in a $XY$ spin system). Let $a>0$ and let $\con: H^1(\Omega;\R^2)\to \R$ be  the Ginzburg-Laundau functional defined as
\begin{equation}\label{intro:GL-con}
\con(v):= a\int_\Om |\nabla v(x)|^{2} \ud x+\frac{1}{\ep^2}\int_\Om (1-|v(x)|^{2})^{2} \ud x\,.
\end{equation}
The asymptotic behavior of $\con$ as $\e\to 0$ has been studied in order to give an energetic description of the onset of vortices (see for instance \cite{BBH,SS2}). A prototypical vortex of degree $z\in\Z\setminus\{0\}$ at a point $x_{0}\in\Omega$ can be thought of as the point singularity of a vectorial order parameter $\bar v_\ep:\Omega\to\R^{2}$ which, outside the ball of radius $\e$ centered at $x_{0}$, winds around the center as $(\frac{x-x_{0}}{|x-x_{0}|})^{z}$. The energy of $\bar v_{\e}$ diverges at order $|\log\e|$ as $\ep\to 0$. As a consequence, to detect the effective energy cost of finitely many vortex singularities, one needs to study the $\con$ energy at a {\em  logarithmic scaling}; that is, to consider the asymptotic behavior of functionals $\frac{\con(v)}{|\log\e|}$. It has been proved in \cite{JS,ABO} that a sequence $\{v_{\e}\}$, along which these energy functionals are equi-bounded, has Jacobians $Jv_\e$ that, up to a subsequence, converge in the flat sense (see Section \ref{sec:npr}) to an atomic measure $\mu=\sum_{i=1}^{n}z_{i}\delta_{x_{i}}$ whose support represents the position of the limiting vortices. The $\Gamma$-limit of $\frac{\con}{|\log\e|}$ with respect to this convergence at $\mu$ is then given by $2\pi a\sum_{i=1}^n |z_i|$ (supposing $x_i\neq x_j$ if $i\neq j$). This value can be rewritten as $2\pi a|\mu|(\Omega)$ and thought of as a functional depending on the total variation $|\mu|(\Omega)$ of $\mu$ in $\Omega$.

Now, if more in general $\Omega$ is regarded as a reference configuration of a heterogeneous material, described by periodic heterogeneities at a length scale $\delta_{\e}$, we may consider the energies $\pregl: H^1(\Omega;\R^2)\to \R$ defined as
\begin{equation}\label{intro:GL0}
\pregl(v):=\int_\Om a\Bigl(\frac{x}{\delta_{\e}}\Bigr)|\nabla v(x)|^{2} \ud x+\frac{1}{\ep^2}\int_\Om (1-|v(x)|^{2})^{2} \ud x\,,
\end{equation}
where $a:\R^{2}\to [\alpha,\beta]$ ($0<\alpha<\beta$) is a $(0,1)^{2}$-periodic function describing the material properties of the media. Note that the energy $\pregl$ is controlled from (above and) below by a multiple of the GL energy $\con$ above. Therefore, setting
\begin{eqnarray*}
X(\Omega):=\Bigl\{\mu\in\M(\Omega)\,:\,\mu=\sum_{i=1}^{n}z_{i}\delta_{x_{i}}\,,\, n\in\N\,,\,z_{i}\in\Z\setminus\{0\}\,,\,x_{i}\in\Omega\Bigr\}\,,
\end{eqnarray*}
the following compactness result holds true.
\begin{theorem}\label{compgl}
\rm{Let $\left\{v_\ep\right\}_\ep\subset H^1(\Omega;\R^2)$ be such that $\pregl(v_\ep)\le C|\log\ep|$. Then, there exists $\mu\in X(\Omega)$ such that, up to subsequences, $Jv_\ep\fla\pi\mu$\,.}
\end{theorem}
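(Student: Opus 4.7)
The plan is to bypass any new analysis by reducing the heterogeneous problem to the already-understood homogeneous one. The key observation is that the coefficient $a(\cdot/\delta_\ep)$ is pinched between the two positive constants $\alpha$ and $\beta$, independently of $\ep$ and $\delta_\ep$, so $\pregl$ controls the standard Ginzburg--Landau functional from below up to a multiplicative constant. Concretely, I would start by remarking that
\begin{equation*}
\pregl(v) \;\ge\; \min\{\alpha, 1\} \left( \int_\Om |\nabla v|^2 \ud x \;+\; \frac{1}{\ep^2}\int_\Om (1-|v|^2)^2 \ud x \right),
\end{equation*}
so that the assumed bound $\pregl(v_\ep)\le C|\log\ep|$ immediately yields the very same type of estimate for the classical, unit-coefficient Ginzburg--Landau energy evaluated at $v_\ep$, with a new constant $C'$ independent of $\ep$.

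From this point the argument is essentially black-box. The classical compactness theorem for the Jacobians of Ginzburg--Landau sequences satisfying a $|\log\ep|$ energy bound (due to Jerrard--Soner and Alberti--Baldo--Orlandi, cited in the introduction as \cite{JS, ABO}) applies verbatim to $\{v_\ep\}$, and delivers a measure $\mu\in X(\Omega)$ together with a subsequence along which $Jv_\ep\fla\pi\mu$.

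Since neither the periodicity of $a$ nor the ratio $\delta_\ep/\ep$ plays any role in this reduction, there is no substantive obstacle. The only item I would verify carefully is that the notion of flat convergence and the class $X(\Om)$ used in the statement coincide with those employed in \cite{JS,ABO}, so that the transfer is lossless; this is a matter of bookkeeping rather than of analysis. The subtler interaction between the two small scales $\ep$ and $\delta_\ep$, and the appearance of a homogenized coefficient in place of the constant $a$, will only enter later in the $\Gamma$-convergence analysis (in the matching lower and upper bounds), not in the compactness statement itself.
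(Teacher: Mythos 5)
Your reduction is exactly what the paper does: the authors note (just before the statement of the theorem) that $\pregl$ is controlled from below by a constant multiple of the homogeneous Ginzburg--Landau energy because $a\ge\alpha>0$, and they then invoke the classical $|\log\ep|$-compactness for Jacobians from \cite{JS,ABO} (see also the explicit reference to \cite[Theorem 4.1]{AP} in the proof of Theorem \ref{mainthmgldopo}). Your proof is correct and coincides with the paper's argument.
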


Assuming $\delta_{\e}\to 0$ as $\e\to 0$ we expect the effective limiting energy at the vortex scaling to be a homogeneous energy combining both homogenization and concentration effects. As these effects depend on the mutual rate of convergence of the vanishing parameters $\e$ and $\delta_{\e}$, different regimes are possible. Heuristically, at some extreme regimes we will have ``separation of scales''. Namely, if $\e$ tends to $0$ sufficiently fast with respect to $\delta=\delta_\e$ then we expect that $\delta$ can be thought of as an independent variable, the dependence on which separately dealt with after letting $\e\to 0$ with fixed $\delta$. In this case, the limit as $\e\to0$ with $\delta$ fixed gives an energy of the form $2\pi\sum_{i=1}^n |z_i|a\bigl({x_i\over\delta}\bigr)$, and the optimization of the location of vortices at minimum points for $a$ (we may assume here that $a$ be continuous), which tend to be dense as $\delta\to0$, finally provides a limit of the form   $$2\pi\min a\sum_{i=1}^n |z_i|. $$
Note that in order that this argument may work, the energy of a recovery sequence should be concentrated on a $\mathrm{o}(\delta)$-neighborhood of a minimum point of $a$. This gives a condition
\begin{equation}\label{intro:co1}
{|\log \delta|\over|\log\e|} \ll 1
\end{equation}
by testing with functions winding as $\frac{x-x_i}{|x-x_i|}$ around a vortex $x_i$.

Conversely, if $\delta=\delta_\e$ tends to $0$ sufficiently fast with respect to $\e$, we expect that the variable $\e$ be considered as fixed and a homogenization process may be first performed with $\delta\to0$. In this case, moreover, since the potential term in \eqref{intro:GL0} forces $v_\ep$ to have modulus equal to one as $\ep\to 0$, (neglecting for a moment the effect of singularities) we may regard the homogenization process to be restricted to the first part of the energy in \eqref{intro:GL0}, which can be written as
\begin{equation*}
G_{\delta}(u):=\int_{\Omega}a\Big(\frac {x} {\delta}\Big)|\nabla u|^2\ud x\,,
\end{equation*}
where $u$ is the lifting of $v$, i.e., $v=e^{\imath u}$\,.
The homogenization of functionals of this form has been extensively studied in terms of $\Gamma$-convergence (see \cite{BrDef}) and it has been shown that $G_\delta\overset{\Gamma}{\longrightarrow}G_0$ as $\delta\to 0$\,, where
$$
G_0(u):=\int_{\Omega}\langle \ho\nabla u,\nabla u\rangle \ud x,
$$
and $\ho$ is the  two-by-two symmetric matrix defined by
\begin{equation}\label{defahom}
\langle\ho \xi,\xi\rangle:=\inf\left\{\int_{(0,1)^{2}}a(y)|\xi+\nabla\ffi(y)|^2\ud y\,:\,\ffi\in W^{1,\infty}_{\mathrm{per}}((0,1)^{2})\right\}\,.
\end{equation}
At this point, the subsequent analysis involves the study of the $\Gamma$-limit as $\e\to0$ of a homogeneous but anisotropic energy functional related to $G_0$ at scale $|\log\e|$.
The validity of this separation of scales  can be formalized by using a coarea formula-type argument, which shows that the $\Gamma$-convergence of $\pregl$ can be obtained working within another well-known framework in the analysis of topological singularities; i.e., the so-called  {\it core-radius approach}. That approach consists in computing the gradient term in the energy outside small regions -- the {\it cores} -- around the singularities, and allows to directly work with $\mathcal S^1$-valued order parameters (see e.g.~\cite{BBH,AP}). In this framework, we may 
describe the energy around a vortex of degree $z$ by an asymptotic formula of the type
\begin{equation}\label{intro:psifor}
\psi(z)=\lim_{\frac{R}{r}\to+\infty}{1\over \log\frac{R}{r}}\min\Bigl\{\int_{B_R\setminus B_r}\langle \ho\nabla u,\nabla u\rangle \ud x: u\in H^1(B_R\setminus B_r),\ {\rm deg}\,(e^{\imath  u};B_r)=z\Bigr\},
\end{equation}
from which the $\Gamma$-limit is obtained by locally optimizing the degree (possibly approximating a vortex by more vortices). A computation eventually allows to conclude that the limit energy has the form
$$2\pi\sqrt {{\rm det}\,\ho}\sum_{i=1}^n |z_i|. $$
In order that this argument work, minimum problems in \eqref{intro:psifor} should be seen as limits of minimum problems
\begin{equation}\label{intro:psifor_e}
\min\Bigl\{\int_{B_R\setminus B_r}a\Bigl({x\over\eta}\Bigr)|\nabla u|^2 \ud x: u\in H^1(B_R\setminus B_r),\ {\rm deg}\,(e^{\imath  u};B_r)=z\Bigr\},
\end{equation}
for some choice of $r$ and $R$ with $R/r\to+\infty$, and $\eta\to0$. This can be done by a scaling argument if $\delta\ll\e$.  An approximation argument, more in general, allows to extend this result to
\begin{equation}\label{intro:co2}
|\log \delta|\ge|\log\e|
\end{equation}
using the scaling properties of the energies.

It is interesting to note that there is a scale gap between the two separation-of-scale regimes given by \eqref{intro:co1} and  \eqref{intro:co2}; i.e., when
\begin{equation}\label{intro:co3}
\lim_{\e\to0} {|\log \delta|\over|\log\e|}=\lambda\in (0,1)
\end{equation}
(the existence of the limit is not restrictive up to extraction of subsequences). In this case the behavior of the $\Gamma$-limit is a convex combination of the extreme ones. Recovery sequences are constructed with vortices concentrating close to minimum points for $a$, while they optimize oscillations at scales between $\e$ and $1$ so as to obtain a homogenized overall behavior at those scales. The final form of the $\Gamma$-limit is then
  $$2\pi\Big((1-\lambda)\min a+\lambda\sqrt {{\rm det}\,\ho}\Big)\sum_{i=1}^n |z_i|, $$
which comprises also the extreme cases, upon setting
\begin{equation}\label{intro:co4}
\lambda=\lim_{\e\to0} {|\log \delta|\over|\log\e|}\wedge 1.
\end{equation}

Note that, since in the logarithmic regime, the $GL_{\e}$ energies concentrate at any scale between $\ep$ and $1$, their behavior is very different from that of the corresponding scalar version, the inhomogeneous Cahn-Hilliard functionals given (after scaling) by
$$
{\rm C H}_\e(u):=\e\int_\Om a\Big(\frac{x}{\delta_{\e}}\Big)|\nabla u(x)|^{2} \ud x+\frac{1}{\ep}\int_\Om (1-|u(x)|^{2})^{2} \ud x\,  \qquad u\in H^1(\Omega),
$$
which concentrate at scale $\e$ producing sharp-interface models. In that case separation of scale occurs for $\e\ll\delta_\e$ and $\delta_\e\ll\e$, while in the {\em critical regime} $\delta_\e\sim\e$ the effective surface tension is described by an optimal-profile problem depending on $K:=\lim_{\e\to0}\delta_\e/\e$ (see \cite{ABCP}). In a sense, in the GL case we do not have a critical behavior and we always have separation of scales. The parameter $\lambda$ above can be seen as describing a threshold scale above and below which the two types of separation of scales take place.

\bigskip

Although suggested by the heuristics, the computation of the $\Gamma$-limits described above is highly non-trivial and needs several new ideas in order to combine techniques from GL and homogenization theories. We briefly outline some of the most relevant technical issues, and state the main results of the paper formalizing the heuristic description given above, subdividing the analysis in the cases $\delta_\ep\lesssim\ep$ and $\delta_\ep\gg\ep$

The following result is proven in Section \ref{sec:mainthmgl}.
\begin{theorem}\label{intro: mainthmgl}{If $\limsup\limits_{\ep\to 0}\frac{\delta_\ep}{\ep}< +\infty$,  then the following $\Gamma$-convergence result holds true.
\begin{itemize}
\item[(i)] (\,$\Gamma$-liminf inequality)
Let $\{v_\ep\}_\ep\subset H^1(\Omega;\R^2)$ be such that $J v_\ep\fla\pi\mu$ for some $\mu\in X(\Omega)$\,. Then
\begin{equation*}
\liminf_{\ep\to 0}\frac{\pregl(v_\ep)}{|\log\ep|}\ge 2\pi\sqrt{\det\ho}|\mu|(\Omega).
\end{equation*}
\item[(ii)](\,$\Gamma$-limsup inequality) For every $\mu\in X(\Omega)$, there exists a sequence $\{v_\ep\}_\ep\subset H^1(\Omega;\R^2)$ such that  $Jv_\ep\fla\pi\mu$ and
\begin{equation*}
\limsup_{\ep\to 0}\frac{\pregl(v_\ep)}{|\log\ep|}\le2\pi\sqrt{\det\ho}|\mu|(\Omega)\,.
\end{equation*}
\end{itemize}
}
\end{theorem}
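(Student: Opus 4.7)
\textbf{Plan for Theorem \ref{intro: mainthmgl}.} The hypothesis $\limsup_\ep \delta_\ep/\ep < +\infty$ places us in the regime where the periodicity is faster than the coherence length, so the proof reflects the fact that homogenization takes place \emph{before} concentration around vortices.

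\emph{Lower bound (i).} I would adapt the classical Jerrard--Sandier ball construction to the heterogeneous weight $a(x/\delta_\ep)$, combined with a mesoscopic homogenization step. A standard truncation argument (exploiting the potential term $\ep^{-2}\int(1-|v_\ep|^2)^2$) lets us assume that $|v_\ep|$ is close to $1$ outside a negligible set, so that $v_\ep = e^{\imath u_\ep}$ on the annuli of interest. Since $a\ge\alpha>0$, running the ball construction for the ambient energy $\alpha\int|\nabla v_\ep|^2+\ep^{-2}\int(1-|v_\ep|^2)^2$ produces, for each atom $x_i$ of $\mu=\sum_i z_i\delta_{x_i}$, a nested family of dyadic annuli $A_{i,k}^\ep = B_{2^{k+1}r_\ep}(x_i)\setminus B_{2^k r_\ep}(x_i)$, $k=0,\ldots,K_\ep=O(|\log\ep|)$, on whose union the energy concentrates and which carry prescribed degrees. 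The new ingredient is a \emph{scale-by-scale homogenization lower bound}: on each $A_{i,k}^\ep$, whose inner radius dominates $\delta_\ep$ (crucially using $\delta_\ep \lesssim \ep \ll r_\ep$),
$$
\int_{A_{i,k}^\ep} a(x/\delta_\ep)|\nabla u_\ep|^2 \ud x \ge \int_{A_{i,k}^\ep} \langle \ho \nabla u_\ep, \nabla u_\ep\rangle \ud x - o_\ep(1),
$$
via the corrector from \eqref{defahom} together with a standard div--curl or two-scale argument. The linear change of variables $y=\ho^{1/2}x$ reduces the resulting anisotropic energy to the isotropic one, extracting a factor $\sqrt{\det\ho}$, after which the classical GL lower bound with prescribed degree applies. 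Summing over $i$ and $k$ yields $2\pi\sqrt{\det\ho}\,|\mu|(\Omega)|\log\ep|$ up to $o(|\log\ep|)$.

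\emph{Upper bound (ii).} By localization and gluing, it suffices to treat a single vortex $\mu=z\delta_{x_0}$. I would build $v_\ep$ on three scales: in the core $B_\ep(x_0)$ put $v_\ep(x)=(x-x_0)/\ep$; outside a fixed small ball $B_\rho(x_0)$ extend by any smooth $\mathcal S^1$-valued map of degree $z$ with bounded energy. On the annulus $B_\rho(x_0)\setminus B_\ep(x_0)$, set $v_\ep=e^{\imath u_\ep}$ with $u_\ep(x)=u_0(x)+\delta_\ep\,\ffi(x/\delta_\ep)$, where $u_0$ is the \emph{anisotropic vortex phase} $u_0(x)=z\arg\bigl(\ho^{1/2}(x-x_0)\bigr)$ (engineered so that $\int_{B_\rho\setminus B_\ep}\langle \ho\nabla u_0,\nabla u_0\rangle\,\ud x = 2\pi\sqrt{\det\ho}\,z^2\log(\rho/\ep)$) and $\ffi$ is a periodic corrector adapted to the slowly varying direction $\nabla u_0(x)$ via a partition of unity at an intermediate scale. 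A direct computation using the defining identity $\int_{(0,1)^2}a(y)|\xi+\nabla\ffi_\xi(y)|^2\ud y=\langle\ho\xi,\xi\rangle$ and periodic averaging of $a(\cdot/\delta_\ep)$ gives $\pregl(v_\ep)\le 2\pi\sqrt{\det\ho}\,|z|\,|\log\ep|+O(1)$.

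\emph{Main obstacle.} The delicate point is the simultaneous validity of the ball construction and of homogenization on the same dyadic annuli $A_{i,k}^\ep$. Because there are $O(|\log\ep|)$ such scales, the per-scale homogenization error must be genuinely $o(1)$ rather than merely bounded, which requires a uniform quantitative corrector estimate and a careful choice of the mesoscopic parameter $r_\ep$. The condition $\delta_\ep\lesssim\ep$ is precisely what guarantees $\delta_\ep\ll r_\ep$ on every dyadic scale and is the essential analytical input separating this regime from the complementary one $\delta_\ep\gg\ep$ treated in the next part of the paper.
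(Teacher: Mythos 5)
Your overall architecture (truncation to reduce to $\mathcal S^1$-valued maps, Jerrard--Sandier ball construction on dyadic annuli, homogenization scale-by-scale, extraction of $\sqrt{\det\ho}$ by the linear change of variables $y=\sqrt{\ho}\,x$, and a three-zone recovery sequence) is aligned in spirit with the paper, which first bounds $\pregl^W$ from below by the core-radius functional $\precr(\mu_\ep,\B_\ep)$ and then invokes Proposition \ref{mainthmcranew}, whose engine is Proposition \ref{homdeg} applied to a family of ``fat'' annuli produced by the ball construction. However there is a genuine gap in your lower-bound step, and a clarification is in order on the upper bound.

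\textbf{The gap in the lower bound.} You claim the pointwise inequality
\[
\int_{A_{i,k}^\ep} a\!\left(\tfrac{x}{\delta_\ep}\right)|\nabla u_\ep|^2\,\ud x
\;\ge\;
\int_{A_{i,k}^\ep}\langle \ho\nabla u_\ep,\nabla u_\ep\rangle\,\ud x - o_\ep(1),
\]
with \emph{the same} $u_\ep$ on both sides, justified by ``a standard div--curl or two-scale argument.'' This inequality is false as stated. Take $a$ a checkerboard with values $\alpha<\beta$, so $\ho=\sqrt{\alpha\beta}\,\mathrm I$. A competitor $u_\ep$ produced by the ball construction is arbitrary in its degree class, and nothing prevents $\nabla u_\ep$ from oscillating at scale $\delta_\ep$ so as to concentrate where $a(\cdot/\delta_\ep)=\alpha$; then the left-hand side behaves like $\alpha\int|\nabla u_\ep|^2$ while the right-hand side is $\sqrt{\alpha\beta}\int|\nabla u_\ep|^2$, and since $\int|\nabla u_\ep|^2\sim |\log\ep|$ the difference is $O(|\log\ep|)$, not $o_\ep(1)$. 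The $\Gamma$-liminf of $F_\delta$ to $F_{\hom}$ controls the limit energy in terms of the \emph{weak limit} of the sequence, not in terms of $u_\ep$ itself; here $\nabla u_\ep$ diverges in $L^2$ so there is no weak limit to fall back on. The paper circumvents this precisely by never comparing the energy of $u_\ep$ to its own homogenized energy: instead it lower-bounds the energy on each fat annulus by the \emph{infimum} of $F_{\delta_\ep}$ over the degree class, and Proposition \ref{homdeg} (built on Corollary \ref{homdeg0}, i.e.\ convergence of minima under the degree constraint, plus a dyadic-annulus subdivision and the scaling $y=x/(R^{\bar k-1}\ep^{s_2})$) identifies this infimum with $(s_2-s_1)\psi(z;T\!f_{\hom})|\log\ep|+o(|\log\ep|)$. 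Your argument needs this detour through infima; the pointwise inequality cannot be made to work.

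\textbf{Upper bound.} Your two-scale corrector ansatz $u_\ep=u_0+\delta_\ep\ffi(\cdot/\delta_\ep)$ with $u_0$ the anisotropic vortex phase is conceptually fine but requires quantitative corrector estimates because $\nabla u_0$ varies on scale $|x-x_0|$, which is comparable to $\delta_\ep$ near the inner radius $\ep$ (note the minor sign slip: one wants $u_0(x)=z\arg(\ho^{-1/2}(x-x_0))$ to produce $\langle\ho\nabla u_0,\nabla u_0\rangle=z^2/|\ho^{-1/2}(x-x_0)|^2$). The paper avoids this delicacy entirely: on $A_{\ep,\ep^s}(x_i)$ it uses the canonical phase $z_i\theta$ (cost $2\pi\beta z_i^2(1-s)|\log\ep|$, negligible as $s\to 1$), on $A_{\ep^s,\bar\rho}(x_i)$ it plugs in an abstract quasi-minimizer $w^i_{\ep,s}\in\widetilde\Anew_{\ep^s,2\bar\rho}(z_i)$ whose energy is controlled by Lemma \ref{lm:glim0} and Proposition \ref{homdeg}, and it never needs the explicit corrector. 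Your route could in principle be made to work, but it is more delicate and the uniformity of corrector errors over $O(|\log\ep|)$ scales is exactly the point you flag as the ``main obstacle''; the paper's use of quasi-minimizers and convergence of minima sidesteps it cleanly.
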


Within the core-radius approach,
we carry out the $\Gamma$-convergence analysis for more general quadratic functionals than the one in the leading term of \eqref{intro:GL0}.
Specifically,
let $f:\R^2\times \R^{2\times 2}\to [0,+\infty)$ be a Carath\'eodory function satisfying the following assumptions:
\begin{align}\label{Pprop}\tag{{\bf P}}
&\textrm{$f(\cdot,M)$ is $(0,1)^2$-periodic for every $M\in\R^{2\times 2}$;}\\ \label{Gprop}\tag{{\bf G}}
&\textrm{there exist two constants $\alpha,\beta$ with $0<\alpha\le \beta$ such that }\\ \nonumber
&
 \alpha|M|^2\le f(y,M)\le \beta|M|^2\,,\quad\textrm{for every }M\in\R^{2\times 2} \textrm{ and for almost every }y\in\R^2\,;\\ \label{Hprop}\tag{{\bf H}}
&\textrm{$f(y,\cdot)$ is homogeneous of degree $2$ for almost every $y\in\R^{2}$}.
\end{align}
We describe the asymptotic behavior in the logarithmic regime of the functionals
\begin{equation*}
\mathcal{F}_{\ep,\delta_\ep}(\mu,w):=\int_{\Omega_\ep(\mu)}f\Big(\frac{x}{\delta_\ep},\nabla w(x)\Big)\ud x\,,
\end{equation*}
defined for  $\mu=\sum_{i=1}^{n}z_i\delta_{x_i}\in X(\Omega)$ and $w\in \AS_\ep(\mu)$\,, with
\begin{equation}\label{admst}
\AS_\ep(\mu):=\{w\in H^1(\Omega_\ep(\mu);\mathcal{S}^1)\,:\,\deg(w,\partial B_\ep(x_{i}))=z_{i}\quad\textrm{for every }i=1,\ldots,n\}\,,
\end{equation}
where $\Omega_\ep(\mu):=\Omega\setminus \bigcup_{i=1}^{n} \overline B_\ep(x_{i})$\,.
Like the Jacobians in the GL theory, $\mu$ is the relevant parameter to keep track of energy concentration. Therefore, we let the functional depend only on $\mu$ by setting
\begin{equation*}
\precr(\mu):=\inf_{w\in \AS_\ep(\mu)}\mathcal F_{\ep,\delta_\ep}(\mu;w)\,.
\end{equation*}

We prove that, for $\delta_\ep\lesssim \ep$, the functional $\precr$ asymptotically behaves as $\pregl$, namely, the homogenization process takes place ``before'' the concentration effect. This implies that the effective cost of a singularity depends on the homogenized energy associated to the functionals $\FF_\delta(\cdot;E)$ defined for any open set $E$ as
\begin{equation}\label{intro:cra_en_0}
\FF_\delta(w;E):=\int_{E}f\Big(\frac{x}{\delta},\nabla w(x)\Big)\ud x\,,\qquad w\in H^1(E;\mathcal S^1)\,.
\end{equation}
Notice indeed that 
\begin{equation}\label{intro:cra_en}
\precr(\mu)=\inf_{w\in \AS_\ep(\mu)}\precrw(w;\Omega_\ep(\mu))\,.
\end{equation}
In \cite{BM} it has been proved that, as $\delta\to 0$, the functionals $\FF_\delta$ $\Gamma$-converge to the homogenized functional $F_{\hom}(\cdot;E):H^{1}(E;\mathcal S^1)\to[0,+\infty)$ defined as
\begin{equation*}
F_{\hom}(w;E):=\int_{E}T\!f_{\hom} (w(x),\nabla w(x) )\ud x\,.
\end{equation*}
In the formula above the energy density $T\!f_{\hom}$ is the tangential homogenization of the function $f$ (see formula \eqref{genhom1}).

In order to make the core-radius functionals non-trivial, we define $\precr$ only on the set
\begin{equation}\label{measep}
X_{\ep}(\Omega):=\Bigl\{\mu=\sum_{i=1}^{n}z_{i}\delta_{x_{i}}\in X(\Omega)\,:\,\min_{i\neq j}\Bigl\{\frac  1 2{|x_{i}-x_{j}|}\,,\di(x_{i},\partial\Omega)\Bigr\}\ge 2\ep\Bigr\}\,.
\end{equation}
In view of  assumption \eqref{Gprop}  and of the classical results on the core-radius approach functional (see for instance \cite[Theorem 3.2]{AP}), we have that the functionals $\precr$ satisfy compactness properties analogous to the ones established in Theorem \ref{compgl}.
\begin{theorem}{Let $\left\{\mu_\ep\right\}_\ep\subset X(\Omega)$ be such that $\mu_\ep\in X_\ep(\Omega)$ for every $\ep>0$
and that $\precr(\mu_\ep)\le C|\log\ep|$. Then, there exists $\mu\in X(\Omega)$ such that, up to subsequences, $\mu_\ep\fla\mu$.}
\end{theorem}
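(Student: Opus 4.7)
The plan is to reduce the statement to the analogous compactness result for the classical (homogeneous, isotropic) core-radius model, quoted in the paper as \cite[Theorem~3.2]{AP}.

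First I exploit the coercivity assumption \eqref{Gprop}. For every admissible $w\in\AS_\ep(\mu_\ep)$ the integrand satisfies $f(x/\delta_\ep,\nabla w)\ge\alpha|\nabla w|^2$ almost everywhere, hence
$$\int_{\Omega_\ep(\mu_\ep)}f\Big(\tfrac{x}{\delta_\ep},\nabla w(x)\Big)\ud x\ \ge\ \alpha\int_{\Omega_\ep(\mu_\ep)}|\nabla w(x)|^2\ud x.$$
Passing to the infimum over $w\in\AS_\ep(\mu_\ep)$ and using the assumption $\precr(\mu_\ep)\le C|\log\ep|$, I obtain
$$\alpha\,\inf_{w\in\AS_\ep(\mu_\ep)}\int_{\Omega_\ep(\mu_\ep)}|\nabla w|^2\ud x\ \le\ \precr(\mu_\ep)\ \le\ C|\log\ep|.$$
Thus, up to dividing the constant by $\alpha$, the sequence $\{\mu_\ep\}$ satisfies the logarithmic energy bound associated with the classical isotropic core-radius functional.

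At this point I invoke \cite[Theorem~3.2]{AP}, which states exactly that under such a logarithmic bound for the isotropic core-radius functional, a sequence of measures in $X_\ep(\Omega)$ admits a subsequence that converges in the flat sense to a limit $\mu\in X(\Omega)$. This concludes the proof.

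For the reader's orientation, the nontrivial content of the cited classical result is a ball-construction argument yielding a linear-in-$|\mu_\ep|$ lower bound of the form $\pi|\mu_\ep|(\Omega)|\log\ep|-O(1)$, which forces $|\mu_\ep|(\Omega)$ to stay uniformly bounded; the separation $|x_i-x_j|\ge 4\ep$ and $\di(x_i,\partial\Omega)\ge 2\ep$ encoded in the definition of $X_\ep(\Omega)$ is precisely what is needed in order to run this construction. Once the total variation is controlled, weak-$*$ compactness of bounded Radon measures produces a limit, and the integer-valued degree constraint combined with a localization argument guarantees that no mass is lost and that the limit remains atomic with integer weights; finally, one upgrades weak-$*$ to flat convergence using that atomic masses are equi-controlled. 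In the present setting no new idea beyond the reduction via \eqref{Gprop} is required, and the only (mild) point to check is that the coercivity constant $\alpha$ suffices to transfer the energy bound, which is immediate.
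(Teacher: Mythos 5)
Your proposal is correct and follows essentially the same reduction the paper itself uses: the paper states this compactness result as an immediate consequence of the coercivity assumption \eqref{Gprop} together with \cite[Theorem~3.2]{AP}, and your explicit rewriting of the bound $f(x/\delta_\ep,\nabla w)\ge\alpha|\nabla w|^2$ to transfer the logarithmic energy estimate to the isotropic core-radius functional is exactly the intended argument.
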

The following theorem on the asymptotic limit of the functionals $\precr$ is proved in Section \ref{sec:mainthmcra}.
\begin{theorem}\label{mainthmcra}{ If $\limsup_{\ep\to 0}\frac{\delta_\ep}{\ep}< +\infty$, then the following $\Gamma$-convergence result holds true.
\begin{itemize}
\vskip4pt
\item[(i)] (\,$\Gamma$-liminf inequality)
For any family $\left\{\mu_\ep\right\}_\ep\subset X(\Omega)$ such that $\mu_\ep\in X_\ep(\Omega)$ for every $\ep>0$ and
$\mu_\ep\fla\mu$ with $\mu\in X(\Omega)$  we have
\begin{equation*}
\liminf_{\ep\to 0}\frac{\precr(\mu_\ep)}{|\log\ep|}\ge\Fzero(\mu).
\end{equation*}
\item[(ii)](\,$\Gamma$-limsup inequality) For every $\mu\in X(\Omega)$, there exists a sequence
$\left\{\mu_\ep\right\}_\ep\subset X(\Omega)$ with $\mu_\ep\in X_\ep(\Omega)$ for every $\ep>0$
such that  $\mu_\ep\fla\mu$ and
\begin{equation*}
\limsup_{\ep\to 0}\frac{\precr(\mu_\ep)}{|\log\ep|}\le\Fzero(\mu)\,.
\end{equation*}
\end{itemize}
}
\end{theorem}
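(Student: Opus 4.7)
The plan is to use the identity \eqref{intro:cra_en} to rewrite $\precr(\mu_\ep)$ as a constrained minimization of $\FF_{\delta_\ep}$ on the perforated domain, and to combine the homogenization result of \cite{BM} (the $\Gamma$-convergence $\FF_\delta\to F_{\hom}$ with tangential density $T\!f_{\hom}$) with a standard core-radius analysis in the spirit of \cite{AP}. Since $\delta_\ep\lesssim\ep$, on every annulus whose inner radius is much larger than $\delta_\ep$ the coefficient $f(\cdot/\delta_\ep,\cdot)$ effectively homogenizes, and the cost of a vortex of degree $z$ should be captured by a cell formula of the form
\begin{equation*}
\psi(z):=\lim_{R/r\to+\infty}\frac{1}{\log (R/r)}\min\Bigl\{\int_{B_R\setminus B_r}T\!f_{\hom}(w,\nabla w)\ud x : w\in H^1(B_R\setminus B_r;\mathcal S^1),\ \deg(w;\partial B_r)=z\Bigr\},
\end{equation*}
so that $\Fzero(\mu)=\sum_i\psi(z_i)$.

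For the $\Gamma$-liminf, given $\mu_\ep\fla\mu$ with $\precr(\mu_\ep)\le C|\log\ep|$, I would pick nearly optimal $w_\ep\in\AS_\ep(\mu_\ep)$. The first step is a Jerrard--Sandier ball construction separating the vortices: for each $\sigma>0$ it produces disjoint balls $B_{\rho_i^\ep}(x_i^\ep)$ containing $\spt\mu_\ep$, with $\rho_i^\ep\to 0$ and $x_i^\ep\to x_i$. Fix a small $\rho>0$ so that the balls $B_\rho(x_i)$ are disjoint and contained in $\Omega$, and cover each $B_\rho(x_i)\setminus B_{\rho_i^\ep}(x_i^\ep)$ by dyadic annuli $A_i^k:=B_{2^{k+1}\rho_i^\ep}(x_i^\ep)\setminus B_{2^k\rho_i^\ep}(x_i^\ep)$, retaining only those with $2^k\rho_i^\ep\gg\delta_\ep$; the discarded scales between $\ep$ and a large multiple of $\delta_\ep$ contribute only $O(1)$ and thus vanish after division by $|\log\ep|$. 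A blow-up rescales $A_i^k$ to an annulus of fixed radial ratio $2$, where the oscillation scale of the coefficient is $\delta_\ep/(2^k\rho_i^\ep)\to 0$, so by \cite{BM} the energy on $A_i^k$ is bounded below by $(1-o(1))$ times the $T\!f_{\hom}$-minimum with the same degree constraint. Summing over $k$ and using the definition of $\psi$, then dividing by $|\log\ep|$ and sending $\rho\to 0$, yields the bound $\sum_i\psi(z_i)=\Fzero(\mu)$.

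For the $\Gamma$-limsup, by subadditivity of $\Fzero$ with respect to concentration of vortices at distinct points it is enough to construct a recovery sequence for $\mu=z\delta_{x_0}$. Fix $R>0$ small enough that $B_R(x_0)\subset\Omega$, and choose a mesoscopic scale $\sigma_\ep$ with $\delta_\ep\ll\sigma_\ep\ll 1$. On $B_R(x_0)\setminus B_{\sigma_\ep}(x_0)$, I would apply the $\Gamma$-limsup from \cite{BM} to a near-optimal competitor $\tilde w$ for the cell problem defining $\psi(z)$: lifting to a phase, adding the homogenization corrector and projecting back to $\mathcal S^1$ produces $w_\ep\in H^1(B_R(x_0)\setminus B_{\sigma_\ep}(x_0);\mathcal S^1)$ of the prescribed degree whose energy $\FF_{\delta_\ep}(w_\ep)$ converges to $\int T\!f_{\hom}(\tilde w,\nabla\tilde w)\ud x\sim\psi(z)\log(R/\sigma_\ep)$. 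Between $\partial B_{\sigma_\ep}(x_0)$ and $\partial B_\ep(x_0)$ I interpolate with the standard degree-$z$ winding map, whose cost is $O(\log(\sigma_\ep/\ep))$ and is negligible under a diagonal choice of $\sigma_\ep$. Outside $B_R(x_0)$ I extend by any fixed $\mathcal S^1$-valued map matching the boundary trace, whose cost is $O(1)$. Dividing by $|\log\ep|$ and sending $\sigma_\ep/\delta_\ep\to\infty$ and $R\to 0$ gives the sharp upper bound.

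The most delicate step will be the lower-bound argument on each mesoscopic annulus, which requires reconciling the $\Gamma$-convergence result of \cite{BM} (stated without topological constraints) with the degree condition prescribed on the inner boundary. Since the degree is not weakly continuous in $H^1$ in general, its persistence through the blow-up must be ensured by slicing in the radial direction: for almost every intermediate radius the trace belongs to $H^{1/2}$ with controlled norm, hence carries a well-defined degree which is preserved along the $\Gamma$-convergence recovery. Implementing this slicing uniformly over the dyadic scales $2^k\rho_i^\ep$, and gluing the resulting lower bounds to obtain a total contribution of order $\psi(z_i)\log(\rho/\ep)\sim\psi(z_i)|\log\ep|$, is where the main technical work lies.
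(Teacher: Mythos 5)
The central gap is that you have misidentified the limit functional. In the paper, $\Fzero(\mu)=\sum_i\Psi(z_i;T\!f_{\hom})$, where $\Psi$ is the \emph{subadditive relaxation} of $\psi$ defined in \eqref{defPsi}, not $\sum_i\psi(z_i)$ as you state. These differ: Proposition \ref{prop:psisimplf} shows that for $f(x,M)=a(x)|M|^2$ one has $\psi(z;T\!f_{\hom})=2\pi\sqrt{\det\ho}\,|z|^2$ while $\Psi(z;T\!f_{\hom})=2\pi\sqrt{\det\ho}\,|z|$, so already $|z|=2$ gives $\psi>\Psi$. The distinction is not cosmetic. A recovery sequence for $z\delta_{x_0}$ may split the vortex into $|z|$ unit vortices that separate on a mesoscale, producing energy $\approx\Psi(z)|\log\ep|<\psi(z)|\log\ep|$; hence a $\Gamma$-liminf of $\sum_i\psi(z_i)$ is simply false, and your dyadic decomposition of $B_\rho(x_i)\setminus B_{\rho_i^\ep}$ cannot produce it. In fact the degree on the inner circle of a dyadic annulus equals the full $z_i$ only outside the smallest ball containing all the split vortices; at finer scales it decomposes into subdegrees $z_{i,m}$ with $\sum_m z_{i,m}=z_i$, and each scale contributes $\psi(z_{i,m})$. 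The paper's Proposition \ref{mainthmcranew} tracks exactly this multiscale structure: after the ball construction it studies the connected-component count $g_\ep(q)$ of $\bigcup_l B_{\ep^q}(x_\ep^l)$, identifies limiting jump exponents $q^i$, associates annuli $C_\ep^{i,m}$ carrying degrees $z_{i,m}$, applies Proposition \ref{homdeg} (which itself keeps the annulus ratio $R$ as a free parameter and sends $R\to\infty$ — your fixed ratio 2 only captures $\psi_{1,2}\le\psi$), and \emph{then} invokes the relaxation $\Psi$ via \eqref{defPsi}. It also needs the extension of $w_\ep$ across zero-degree balls (via the no-merging annulus found in Lemma \ref{lemmaus} and a lifting/cutoff) so that the lower bound can be localized to $B_{2\rho}(x_0)$; that step does not appear in your sketch.

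Your limsup sketch is closer to the paper's construction \eqref{reccra} (winding map from $\ep$ to $\ep^s$ glued to a near-minimizer of the $F_{\delta_\ep}$ cell problem, then diagonalized), and your concern about the degree constraint surviving weak $H^1$ convergence is legitimate — the paper resolves it in Theorem \ref{homogen_thm} by precisely the Fubini-slicing argument you describe, which is then used through Corollary \ref{homdeg0}. But because you target $\sum_i\psi(z_i)$ rather than $\sum_i\Psi(z_i)$, you do not include the essential reduction the paper performs at the start of the $\Gamma$-limsup proof: $\Fzero$ is the lower semicontinuous envelope (in the flat topology) of $\widetilde{\mathscr F}_0(\mu)=\sum_i\psi(z_i)$, so one first replaces $\mu$ by a nearby measure whose degrees realize the minimum in \eqref{defPsi} and then builds the recovery sequence for that measure. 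Without the $\Psi$-relaxation on both sides, the two inequalities you would obtain are not matching and do not establish $\Gamma$-convergence.
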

In the statement above $\Fzero:X(\Omega)\to [0,+\infty)$ is the functional defined as
\begin{equation}\label{gali}
\mathscr{F}_0(\mu):=\sum_{i=1}^{n}\Psi(z_i;T\!f_{\hom})\qquad \textrm{for every }\mu=\sum_{i=1}^n z_i\delta_{x_i}\in X(\Omega)\,,
\end{equation}
where $\Psi(z\,;T\!f_{\hom})$, introduced in \eqref{defPsi}, is the asymptotic energy cost of a singularity of degree $z$ in a homogeneous medium whose energy is $F_{\hom}$.
The function $\Psi(z\,;T\!f_{\hom})$  is obtained via an asymptotic cell-problem formula and a relaxation procedure.
Loosely speaking, we first introduce the minimal $F_{\hom}$ energy in an annulus around a singularity with degree $z$ and we show that such a quantity admits a finite limit, denoted by $\psi(z;T\!f_{\hom})$, when the quotient of the radii goes to $+\infty$\,. Then $\Psi(\cdot;T\!f_{\hom})$ is obtained as the relaxation of the function $\psi(\cdot\,;T\!f_{\hom})$ on $\Z$\, (see formula \eqref{defPsi}), accounting for the fact that a singularity of degree $z$ can be approximated by a family of singularities of degree $z_j$ with $\sum_{j}z_j=z$\,. In the simple case that $f(x,M)=a(x)|M|^{2}$ with $a\in[\alpha,\beta]$, we actually prove that $\Psi(z\,;T\!f_{\hom})=2\pi\sqrt{\det\ho}|z|$ (see Proposition \ref{prop:psisimplf}).

Using a scaling argument, in Proposition \ref{homdeg} we show that $\psi(\cdot;T\!f_{\hom})$ is also the asymptotically minimal $\precr$ energy on ``fat'' annuli around a vortex of degree $z$. Here, ``fat'' stays for thick enough to contain infinitely many $\delta_{\e}$-periodicity cells. Such a property allows to apply the
 homogenization result in \cite{BM} that we show to hold even if the functionals are subject to a degree constraint 
 (see Theorem \ref{homogen_thm}  \& Corollary \ref{homdeg0}).

A further technical aspect of our analysis is the use, in the proof of the lower bound, of a refinement of the celebrated {\it ball construction} introduced in \cite{S, J}. This method allows to find a one-parameter family $\B_\ep(t)$ of growing and merging balls, that in turn identify a family of annuli where the energy concentrates. In our case, using a strategy similar to \cite{DGP}, we stop the process at an appropriate ``time'' $t_{\e}$ at which the constructed family of annuli is ``fat'' enough to apply the analysis described above and to obtain the desired lower bound.

\par

Using the same strategy exploited for $\delta_\ep\lesssim\ep$ we are able to study the asymptotic behavior of the core-radius approach and of the Ginzburg-Landau functionals also for $\delta_\ep\gg\ep$ (see also \cite{DOn} for an example in this case).
More precisely, we assume that $\delta_\ep\to 0$ as $\ep\to 0$ and that
\begin{equation}\label{limlog}
\lambda:=\lim_{\ep\to 0}\frac{|\log\delta_\ep|}{|\log\ep|}\in [0,1)\,,
\end{equation}
which implies that $\lim_{\ep\to 0}\frac{\delta_\ep}{\ep}=+\infty$\,.
Furthermore, we assume that
\begin{equation}\label{aform}
\FF_{\delta}(w;E):=\int_{E}a\Big(\frac{x}{\delta}\Big)|\nabla w|^2\ud x\,,\qquad w\in H^1(E;\mathcal S^1)\,,
\end{equation}
for some measurable $(0,1)^2$-periodic function $a$ with $a(y)\in [\alpha,\beta]\subset(0,+\infty)$ for a.e. $y\in\R^2$.
The main results in this scaling regime are the following two theorems proven in Section \ref{sec:delta>ep}.
\begin{theorem}\label{cratbw}
If
\eqref{limlog} is satisfied,
then the following statements hold true.
\begin{itemize}
\vskip4pt
\item[(i)] (\,$\Gamma$-liminf inequality)
For any family $\left\{\mu_\ep\right\}_\ep\subset X(\Omega)$ such that $\mu_\ep\in X_\ep(\Omega)$ for every $\ep>0$ and
$\mu_\ep\fla\mu$ with $\mu\in X(\Omega)$  we have
\begin{equation*}
\liminf_{\ep\to 0}\frac{\precr(\mu_\ep)}{|\log\ep|}\ge2\pi\bigg(\Big(1-\lambda)\mathrm{ess}\inf a+ \lambda\sqrt{\det\ho}\bigg)|\mu|(\Omega)\,.
\end{equation*}
\item[(ii)](\,$\Gamma$-limsup inequality) For every $\mu\in X(\Omega)$, there exists a sequence
$\left\{\mu_\ep\right\}_\ep\subset X(\Omega)$ with $\mu_\ep\in X_\ep(\Omega)$ for every $\ep>0$
such that  $\mu_\ep\fla\mu$ and
\begin{equation*}
\limsup_{\ep\to 0}\frac{\precr(\mu_\ep)}{|\log\ep|}\le2\pi\bigg(\Big(1-\lambda)\mathrm{ess}\inf a+ \lambda\sqrt{\det\ho}\bigg)|\mu|(\Omega)\,.
\end{equation*}
\end{itemize}
\end{theorem}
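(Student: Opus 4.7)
The proof follows the separation-of-scales heuristic described in the introduction: in the regime $\delta_\ep\gg\ep$ the relevant transition scale around each vortex is $\delta_\ep$, below which the medium looks essentially constant to the core and above which it is averaged out by homogenization. Accordingly, I would split each vortex annulus into an inner part from radius $\ep$ up to $\rho_\ep:=\delta_\ep/\omega_\ep$ and an outer part from $\sigma_\ep:=\delta_\ep\omega_\ep$ up to a fixed small radius $r_0$, where $\omega_\ep\to+\infty$ slowly enough that $\log\omega_\ep=\mo(|\log\ep|)$. The logarithmic widths of the two parts are then $(1-\lambda)|\log\ep|+\mo(|\log\ep|)$ and $\lambda|\log\ep|+\mo(|\log\ep|)$ respectively, while the transition annulus has logarithmic width $\mo(|\log\ep|)$ and will yield only a remainder.

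For the $\Gamma$-limsup I would first reduce to the case $|z_i|=1$ by splitting. Fixing $\eta>0$ and a Lebesgue density point $y^*\in[0,1)^2$ of the set $\{y:a(y)\le \mathrm{ess}\inf a+\eta\}$, I set $x_i^\ep$ to be the point of $\delta_\ep y^*+\delta_\ep\Z^2$ closest to $x_i$, so that $\mu_\ep:=\sum_i z_i\delta_{x_i^\ep}\fla\mu$. On each inner annulus I take the radial winding $w_\ep(x):=e^{\imath z_i\theta_i(x)}$ centered at $x_i^\ep$; after the change of variables $y=(x-x_i^\ep)/\delta_\ep$ the corresponding energy equals $\int a(y^*+y)/|y|^2\,\ud y$ over an annulus contained in $B_{1/\omega_\ep}(0)$, and a dyadic decomposition together with the Lebesgue point property at $y^*$ (on each dyadic annulus, Chebyshev applied to the set $\{|a-a(y^*)|>\sqrt{\eta}\}$ bounds the oscillation of $a$) yields
\begin{equation*}
\int_{B_{\rho_\ep}(x_i^\ep)\setminus B_\ep(x_i^\ep)}a(x/\delta_\ep)|\nabla w_\ep|^2\,\ud x \le (\mathrm{ess}\inf a+C\sqrt{\eta})\,2\pi(1-\lambda)|\log\ep|+\mo(|\log\ep|).
\end{equation*}
On the outer annulus I would borrow the recovery-sequence construction from the proof of Theorem \ref{mainthmcra}, applied with $\sigma_\ep$ in place of $\ep$: since $\sigma_\ep/\delta_\ep\to+\infty$, the smallness hypothesis on $\delta$ of that theorem is satisfied in the rescaled annulus, and the cell-corrector-based vortex profile coming from \eqref{defahom} provides an admissible map with energy at most $2\pi\sqrt{\det\ho}\log(r_0/\sigma_\ep)+\mo(|\log\ep|)=2\pi\sqrt{\det\ho}\,\lambda|\log\ep|+\mo(|\log\ep|)$. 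A Lipschitz interpolation in the transition corona has gradient $\mo(1/\rho_\ep)$ over an area $\mo(\sigma_\ep^2)$, hence contributes $\mo(|\log\ep|)$. Summing over vortices and sending $\eta\to 0$ after $\ep\to 0$ gives the sharp upper bound.

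For the $\Gamma$-liminf I adapt the refined ball construction used in the $\delta_\ep\lesssim\ep$ analysis, running it in two stages separated by the scale $\delta_\ep$. In the first stage I grow the balls from initial radius $\ep$ according to the Sandier-Jerrard scheme until the family $\B_\ep(t_1)$ has total radius of order $\delta_\ep$; on every annulus produced at time $t<t_1$ the pointwise inequality $a(\cdot/\delta_\ep)\ge \mathrm{ess}\inf a$ combined with the classical Dirichlet lower bound on annular regions with prescribed degree yields a cumulative contribution of at least $2\pi\,\mathrm{ess}\inf a\cdot|\mu|(\Omega)\cdot(1-\lambda)|\log\ep|-\mo(|\log\ep|)$. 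In the second stage I restart the construction from the balls of $\B_\ep(t_1)$ and grow up to a fixed radius $r_0$. Every annulus opened in this stage has inner radius $\gtrsim\delta_\ep$ and hence contains many periodicity cells of $a(\cdot/\delta_\ep)$; I then invoke the homogenized lower bound underlying Theorem \ref{mainthmcra}, namely the cell-problem formula \eqref{intro:psifor} combined with the identification $\Psi(z;Tf_{\hom})=2\pi\sqrt{\det\ho}|z|$ from Proposition \ref{prop:psisimplf}, to obtain a contribution of at least $2\pi\sqrt{\det\ho}\cdot|\mu|(\Omega)\cdot\lambda|\log\ep|-\mo(|\log\ep|)$. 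Summing the two stages yields the liminf inequality.

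The main obstacle I expect is degree preservation at the interface between the two ball-construction stages: when balls merge during stage one, the absolute value of the total degree of the merged ball can strictly drop, so naively part of $|\mu|(\Omega)$ could leak out before stage two starts. Following \cite{DGP}, I would choose the stopping time $t_1$ so that $\B_\ep(t_1)$ consists of well-separated balls whose individual degrees (summing in absolute value to $|\mu|(\Omega)-\mo(1)$) serve as the effective initial data of the second stage, and verify that the energy in the thin corona between the two stages is negligible.
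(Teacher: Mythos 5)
Your limsup construction matches the paper's proof of Theorem \ref{cradelta>ep}(ii) in all essential respects: reduce to unit degrees, pick a Lebesgue density point $y^*$ of $\{a\le\mathrm{ess}\inf a+\eta\}$, snap the vortex centers to the scaled lattice $\delta_\ep y^*+\delta_\ep\Z^2$, use the radial winding below the period scale (estimated by a dyadic decomposition and the density-point property), and a near-optimal competitor for the homogenization problem above it. You split at $\delta_\ep/\omega_\ep$ and $\delta_\ep\omega_\ep$ with a narrow matching corona; the paper splits once at $\delta_\ep^s$ (which is $\gg\delta_\ep$) and sends $s\to 1$ afterwards, absorbing the resulting error $2\pi\beta(1-s)\lambda|\mu|(\Omega)$. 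Both choices reach the sharp constant.

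The liminf argument as written has a genuine gap. The ball construction's shell estimate, Proposition \ref{ballconstr}(4), is derived from a per-annulus bound \eqref{lbastratto} that must hold for arbitrarily \emph{thin} annuli, and for annuli of width comparable to or smaller than $\delta_\ep$ the integrand $a(\cdot/\delta_\ep)|\nabla w|^2$ admits no lower bound better than $\alpha|\nabla w|^2$, since there is no room to average over period cells. Restarting the Sandier--Jerrard scheme at scale $\delta_\ep$ therefore only reproduces the constant $\alpha$, not $\sqrt{\det\ho}$, in the second stage; the ball construction cannot be "upgraded" to the homogenized density. What the paper actually does (Proposition \ref{mainthmcranewdelta>ep}, mirroring Proposition \ref{mainthmcranew}) is to run the ball construction once, for auxiliary purposes only --- to bound $|\mu_\ep(p)|(\Omega)$, to extend $w_\ep$ inside zero-degree balls, and to find a merge-free stopping time via Lemma \ref{lemmaus}. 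The quantitative lower bound then comes from a separate construction of finitely many disjoint \emph{fat} annuli $C_\ep^{i,m}$ obtained from the discontinuity exponents of the connected-component-counting function $g_\ep$; their degrees sum to the limiting degree at every scale, and their logarithmic widths are positive fractions of $|\log\ep|$. One applies the pointwise bound $a\ge\mathrm{ess}\inf a$ on the annuli at exponents above the threshold $\lambda$, and Proposition \ref{homdeg} --- whose hypotheses are precisely "fat annulus with many cells" --- on those below $\lambda$. The degree-cancellation issue you flag is indeed present but is handled by the flat convergence $\mu_\ep\fla\mu$ and formula \eqref{dsum42}; the missing ingredient in your sketch is the fat-annulus construction, not a second ball-construction pass.
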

\begin{theorem}\label{gltbw}
If \eqref{limlog} is satisfied,
then the following $\Gamma$-convergence result holds true.
\begin{itemize}
\item[(i)] (\,$\Gamma$-liminf inequality)
Let $\{v_\ep\}_\ep\subset H^1(\Omega;\R^2)$ be such that $J v_\ep\fla\pi\mu$ for some $\mu\in X(\Omega)$\,. Then
\begin{equation*}
\liminf_{\ep\to 0}\frac{\pregl(v_\ep)}{|\log\ep|}\ge 2\pi\bigg(\Big(1-\lambda)\mathrm{ess}\inf a+ \lambda\sqrt{\det\ho}\bigg)|\mu|(\Omega).
\end{equation*}
\item[(ii)](\,$\Gamma$-limsup inequality) For every $\mu\in X(\Omega)$, there exists a sequence $\{v_\ep\}_\ep\subset H^1(\Omega;\R^2)$ such that  $Jv_\ep\fla\pi\mu$ and
\begin{equation*}
\limsup_{\ep\to 0}\frac{\pregl(v_\ep)}{|\log\ep|}\le2\pi\bigg(\Big(1-\lambda)\mathrm{ess}\inf a+ \lambda\sqrt{\det\ho}\bigg)|\mu|(\Omega)\,.
\end{equation*}
\end{itemize}
\end{theorem}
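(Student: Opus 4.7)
The plan is to derive Theorem \ref{gltbw} from the core-radius Theorem \ref{cratbw} applied to $f(y,M) = a(y)|M|^2$, via the standard equivalence between the Ginzburg--Landau and core-radius approaches. Since Theorem \ref{cratbw} already produces the sharp factor $(1-\lambda)\mathrm{ess}\inf a + \lambda\sqrt{\det\ho}$, it suffices to match, up to errors negligible at scale $|\log\ep|$, the GL energy $\pregl(v_\ep)$ with the core-radius energy $\precr(\mu_\ep)$ of a suitable associated atomic measure $\mu_\ep$.

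For the $\Gamma$-limsup in (ii), fix $\mu = \sum_{i=1}^n z_i \delta_{x_i} \in X(\Omega)$ and let $\mu_\ep \in X_\ep(\Omega)$, $\mu_\ep \fla \mu$, with $w_\ep \in \AF_\ep(\mu_\ep)$ realize the $\limsup$ of Theorem \ref{cratbw}(ii); write $\mu_\ep = \sum_{i=1}^n z_i \delta_{x_i^\ep}$. Define $v_\ep := w_\ep$ on $\Omega_\ep(\mu_\ep)$ and extend inside each core ball $B_\ep(x_i^\ep)$ by the standard radial vortex profile
$$
v_\ep(x) := \frac{|x-x_i^\ep|}{\ep}\Bigl(\frac{x-x_i^\ep}{|x-x_i^\ep|}\Bigr)^{z_i},
$$
identifying $\R^2$ with $\C$. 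A direct computation shows that the gradient contribution of $v_\ep$ inside $B_\ep(x_i^\ep)$ is bounded by a constant depending only on $|z_i|$ and $\beta$, the potential term contributes $O(1)$ uniformly in $\ep$, and $Jv_\ep \fla \pi\mu$. Hence
$$
\pregl(v_\ep) \le \precrw(w_\ep;\Omega_\ep(\mu_\ep)) + O(1) = \precr(\mu_\ep) + o(|\log\ep|),
$$
and dividing by $|\log\ep|$ yields (ii).

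For the $\Gamma$-liminf in (i), let $\{v_\ep\}$ satisfy $\pregl(v_\ep) \le C|\log\ep|$ and $Jv_\ep \fla \pi\mu$. I would apply the refined ball construction of Section \ref{sec:mainthmcra} adapted to the heterogeneous GL energy, stopping at a scale $\sigma_\ep\to 0$ with $|\log\sigma_\ep|/|\log\ep|\to 1$, producing disjoint closed balls $\overline{B}_{r_i^\ep}(y_i^\ep)$ with $r_i^\ep\ge\sigma_\ep$ and degrees $z_i^\ep$ such that $\mu_\ep := \sum_i z_i^\ep \delta_{y_i^\ep} \in X_{\sigma_\ep}(\Omega)$ and $\mu_\ep \fla \mu$. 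Since the potential term forces $|v_\ep|\to 1$ off the balls, a Fubini--coarea argument on level sets of $1-|v_\ep|^2$ selects slightly larger radii $\tilde r_i^\ep$ on whose boundary circles $|v_\ep|\ge 1-o(1)$; combining the pointwise estimate $|\nabla v_\ep|^2\ge |v_\ep|^2\,|\nabla(v_\ep/|v_\ep|)|^2$ with the bound $a\le\beta$ yields
$$
\pregl(v_\ep) \ge (1-o(1))\int_{\Omega\setminus\bigcup_i B_{\tilde r_i^\ep}(y_i^\ep)} a\Bigl(\frac{x}{\delta_\ep}\Bigr)\Bigl|\nabla \frac{v_\ep}{|v_\ep|}\Bigr|^2\,dx - o(|\log\ep|) \ge \precr(\mu_\ep) - o(|\log\ep|),
$$
where the last inequality uses that $v_\ep/|v_\ep|\in\AF_{\sigma_\ep}(\mu_\ep)$ on $\Omega_{\sigma_\ep}(\mu_\ep)$ (after a further coarea selection to trade $\tilde r_i^\ep$ for $\sigma_\ep$). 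Applying Theorem \ref{cratbw}(i) at core-radius scale $\sigma_\ep$ (for which $|\log\delta_\ep|/|\log\sigma_\ep|\to\lambda$ is preserved) and dividing by $|\log\ep|\sim|\log\sigma_\ep|$ gives the desired lower bound.

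The hard part will be the sharpness of the coarea reduction in the lower bound: a naive slicing at $|v_\ep|=1/2$ would lose a factor $1/4$ and destroy the precise constant $(1-\lambda)\mathrm{ess}\inf a + \lambda\sqrt{\det\ho}$. The standard remedy, following \cite{JS,ABO}, is to optimize the cutoff level using the potential term $\ep^{-2}\int(1-|v_\ep|^2)^2$ to pick a level so close to $1$ that the energy lost to modulus variation is $o(|\log\ep|)$. Here this must be implemented while keeping control of the oscillating coefficient $a(x/\delta_\ep)$ with $\delta_\ep \sim \ep^\lambda$ and choosing the stopping scale $\sigma_\ep$ compatibly, so that the combined concentration--homogenization splitting producing the factor $(1-\lambda)\mathrm{ess}\inf a + \lambda\sqrt{\det\ho}$ in Theorem \ref{cratbw} is not disturbed by the passage from $v_\ep$ to $v_\ep/|v_\ep|$.
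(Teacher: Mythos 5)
Your upper-bound argument is sound and matches the paper's strategy: take a recovery measure $\mu_\ep$ and near-minimizer $w_\ep$ for the core-radius functional, fill in the cores $B_\ep(x_i^\ep)$ with the radial profile $\frac{|x-x_i^\ep|}{\ep}\bigl(\frac{x-x_i^\ep}{|x-x_i^\ep|}\bigr)^{z_i}$, observe that the potential and Dirichlet contributions inside the cores are $O(1)$, and check $Jv_\ep\fla\pi\mu$. The paper implements the same idea directly from its explicit function $u_{\ep,\eta,s}$ in \eqref{reccra2} and the bound \eqref{lisu}, but quoting a near-minimizer of Theorem \ref{cratbw}(ii) is an equivalent route.

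Your lower-bound argument has a genuine gap at the point where you try to pass from the ball construction to Theorem \ref{cratbw}(i). You write
\[
\int_{\Omega\setminus\bigcup_i B_{\tilde r_i^\ep}(y_i^\ep)} a\Bigl(\tfrac{x}{\delta_\ep}\Bigr)\Bigl|\nabla \tfrac{v_\ep}{|v_\ep|}\Bigr|^2\,dx \ge \precr(\mu_\ep) - o(|\log\ep|),
\]
justified by the claim that $v_\ep/|v_\ep|\in\AF_{\sigma_\ep}(\mu_\ep)$ on $\Omega_{\sigma_\ep}(\mu_\ep)$. That claim is not available: $v_\ep/|v_\ep|$ need not be defined (let alone $H^1$) on $\Omega_{\sigma_\ep}(\mu_\ep)\setminus\bigl(\Omega\setminus\bigcup_i B_{\tilde r_i^\ep}\bigr)$, since $v_\ep$ can vanish in the annuli $B_{\tilde r_i^\ep}\setminus B_{\sigma_\ep}$. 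And the domain inclusion runs the wrong way for a lower bound: if $\sigma_\ep\le\tilde r_i^\ep$, then $\Omega_{\sigma_\ep}(\mu_\ep)\supset\Omega\setminus\bigcup_i B_{\tilde r_i^\ep}$, so the infimum $\precr(\mu_\ep)$, taken over the \emph{larger} domain, can exceed the integral over the smaller one. What one actually obtains from the ball construction is a bound below by the core-radius infimum \emph{relative to the constructed ball family}, $\precr(\mu_\ep,\B_\ep)$ as in \eqref{cra_new}. The paper closes precisely this gap by proving Proposition \ref{mainthmcranewdelta>ep}, a version of the $\Gamma$-liminf inequality stated for general disjoint ball families $\B_\ep$ subject only to $\rad(\B_\ep)\le C\ep|\log\ep|$ and $|\mu_\ep|(\Omega)\le C|\log\ep|$, without the $X_{\sigma_\ep}$-type separation assumption. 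The paper's proof of Theorem \ref{gldelta>ep}(i) then runs the coarea/ball-construction reduction verbatim from Theorem \ref{mainthmgldopo}(ii) and applies that proposition; this is the step you would need to substitute for the invocation of Theorem \ref{cratbw}(i) at an auxiliary scale $\sigma_\ep$. (Even setting aside the domain issue, you would also need to verify the separation constraint in the definition \eqref{measep} of $X_{\sigma_\ep}(\Omega)$ — i.e.\ $|x_i-x_j|\ge 4\sigma_\ep$ and distance $\ge 2\sigma_\ep$ to $\partial\Omega$ — which the ball construction does not automatically furnish for its centers.)
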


\par


We conclude the introduction with a few comments and remarks about perspectives.
A natural follow-up of our results  is the extension of our analysis to GL energies with more
general integrands in the leading term as those considered in the core-radius approach. A necessary first step in this direction is the proof of a homogenization result for energies defined on maps taking values in a tubular neighborhood of $\mathcal S^1$. More specifically, one could relax the $\mathcal S^1$-constraint in the functionals $F_\delta(\cdot;E)$ in \eqref{intro:cra_en_0}, assuming the latter to be defined on  $H^1(E;B_{1+\tau}\setminus B_{1-\tau})$ for some $\tau\in (0,1)$, and then study their asymptotic behavior  when both $\delta$ and $\tau$ tend to $0$\,.
Another possible extension of our model is the analysis of the case of energy density $f$ satisfying mild coercivity assumptions.
This would allow to analyze for instance the problem of topological singularities in presence of soft inclusions of the inhomogeneous material. In this respect, an analysis on the behavior of minimizers of GL functionals in perforated domains has been carried out in \cite{BeM}.
Another challenging issue is to look at a higher-order description of the functionals $\pregl$ and $\precr$, that in the homogeneous case leads to the so-called {\it renormalized energy} governing the dynamics of the singularities (see for instance \cite{SS1} for the GL theory and \cite{ADGP} for discrete models exhibiting topological singularities). In our case of vanishing inhomogeneities we expect the corresponding renormalized energy to depend on the functional $T\!f_{\hom}$.
Furthermore, we believe that some of the techniques developed in this paper can also be used to make progress in studying stochastic homogenization problems in concentration theory, as for instance those in which the energy density $f$ is replaced by a stationary random potential.

We finally note that inhomogeneities in the GL theory can also be introduced in the potential term; e.g., considering energies of the form
\begin{equation}\label{intro:GL7}
\pregl(v):=\int_\Om |\nabla v(x)|^{2} \ud x+\frac{1}{\ep^2}\int_\Om \Bigl(a\Bigl(\frac{x}{\delta_{\e}}\Bigr)-|v(x)|^{2}\Bigr)^{2} \ud x\,.
\end{equation}
For some homogenization results for energies \eqref{intro:GL7} see \cite{BCG, BR, DSMM, DS} and the references therein. The results obtained in those papers differ from ours, since the energy in \eqref{intro:GL7} describes a different physical system, namely Type II superconductors in presence of small impurities. Note that a complete study of energies of the form \eqref{intro:GL7} may require a very complex multi-scale analysis even in the scalar case (see e.g.~\cite{DLN,BZe,CFHP}).

\vskip5pt

\textsc{Acknowledgements:} The hospitality of the Scuola Internazionale Superiore di Studi Avanzati (SISSA) where part of this research was done is gratefully acknowledged. R. Alicandro, {A. Braides}, and L. De Luca are members of the Gruppo Nazionale per l'Analisi Matematica,
la Probabilit\`a e le loro Applicazioni (GNAMPA) of the Istituto Nazionale di Alta Matematica (INdAM).
A. Braides acknowledges the MIUR Excellence Department Project awarded to the Department of Mathematics, University of Rome Tor Vergata, CUP E83C18000100006.
M. Cicalese was supported by the DFG Collaborative Research Center TRR 109, ``Discretization in Geometry and Dynamics''.
%


\section {Notation and preliminary results}\label{sec:npr}
\noindent {\bf Basic notation.}
\noindent Given two vectors $x,y\in \R^2$, $x \cdot y$ denotes their scalar product. As usual, the norm of $x$ is denoted by $|x| = \sqrt{x \cdot x}$. For every $r>0$ and $x\in\R^{2}$,  $B_{r}(x)$ denotes the open ball of radius $r$ centered at $x$. For $x=0$ we also write $B_{r}$ in place of $B_{r}(0)$.  $\mathcal S^{1}$ denotes the boundary of $B_{1}$, namely the unit circle in $\R^2$.
Given $a\in\R$, $\lfloor a\rfloor:=\max\{z\in\Z:\, z\le a\}$ and $\lceil a \rceil:= \min\{z\in\Z:\, z\ge a\}$ denote the integer parts of $a$ from below and from above, respectively. The imaginary unit is denoted by  $\iota\in\mathbb C$ and the complex number $e^{\iota a}=\cos a+\iota \sin a\in\mathbb C$ is identified with the Euclidean vector $(\cos a,\sin a)\in\R^{2}$. The identification extends to all $\mathcal S^1$-valued maps that can be intended as complex functions as well, if needed. In particular, for every $z\in\Z$, by $(x/|x|)^z$ we mean the complex function obtained by taking the $z$-th complex power of the function $x/|x|$. We say that a family $\{g_\eta\}_{\eta}$ converges to $g_0$ as $\eta\to 0$ in the topology $\mathscr T$, and we write $g_{\eta}\overset{\mathscr{T}}{\longrightarrow} g_0$ whenever $g_{\eta_n}\overset{\mathscr{T}}{\longrightarrow} g_0$ for any null sequence $\{\eta_n\}_{n\in\N}$. With a little abuse of terminology the family $\{g_\eta\}_{\eta}$\, is still called a sequence. The letter $C$ denotes a positive constant whose value may change each time we write it. \\

\noindent {\bf Weak star and flat convergence.}
Let $\Omega\subset\R^{2}$ be an open and bounded set with Lipschitz boundary.  $C_c(\Omega)$ denotes the space of continuous functions compactly supported in $\Omega$
endowed with the supremum norm. We say that a sequence $\{\mu_n\}_{n\in\N}$ of measures converges weakly star in $\Omega$ to a measure $\mu$, and we write $\mu_{n}\weakstar\mu$ if for any $\ffi\in C_c(\Omega)$
$$
\langle\mu_n,\ffi\rangle\to\langle\mu,\ffi\rangle\qquad\textrm{as }n\to+\infty\,.
$$
$C^{0,1}(\Om)$ denotes the space of Lipschitz continuous functions on $\Om$  endowed with the following norm
$$
\|\psi\|_{C^{0,1}} := \sup_{x\in\Om} |\psi(x)|+ \sup_{\newatop{x,y\in\Om}{  x\neq y}} \frac{|\psi(x) - \psi(y)|}{|x-y|},
$$
and we let $C^{0,1}_c(\Om)$ be its subspace of functions with compact support.
The norm in the dual of $C_c^{0,1}(\Omega)$ will be denoted by $\|\cdot\|_\flap$ and referred to as {\it{flat norm}}, while
$\fla$ denotes the convergence with respect to this norm.\\

\noindent {\bf Jacobian, current, degree.}
Given $v=(v^1,v^2)\in H^1(\Om;\R^2)$, the Jacobian $J v$ of $v$ is the $L^1$
function defined as follows
$$
J v:= \text{det} \nabla v.
$$
For every $v\in H^1 (\Om;\R^2)$, we can interpret $J v$ as an
element of the dual of $C^{0,1}_c(\Om)$ by setting
$$
\langle J v , \psi\rangle := \int_{\Om} J v  \, \psi \, \ud x\,, \qquad \text{ for any }
\psi\in C^{0,1}_c({\Om}).
$$
Notice that $Jv$ can be written in a
divergence form as $J v= \text{div } (v^1 v^2_{x_2}, - v^1 v^2_{x_1})$, i.e., for any $\psi \in C^{0,1}_c(\Om)$,
\begin{equation}\label{jwds}
\langle J v, \psi\rangle= - \int_{\Om} v^1 v^2_{x_2} \psi_{x_1} - v^1
v^2_{x_1} \psi_{x_2} \ud x.
\end{equation}
Equivalently,   we have $J v = \cu (v^1 \nabla v^2)$ and $J v = \frac{1}{2} \cu j(v)$,
where
$$
j(v):= v^1 \nabla v^2 - v^2 \nabla v^1
$$
is the so-called {\it current} associated to $v$.

Let $A\subset \Om$ be an open set with Lipschitz boundary, and let $h\in H^{\frac12}(\partial A; \R^2)$ with $|h|\ge c >0$. The {\it degree} of $h$ is defined as
\begin{equation*}
{{\deg(h,\partial A):= \frac{1}{2\pi} \int_{\partial A} \frac{h}{|h|}\cdot \frac{\partial}{\partial \tau} \Big(\frac{h_2}{|h|},- \frac{h_1}{|h|}\Big)\ud\Huno\,,}}
\end{equation*}
where $\tau$ is the tangent field to $\partial A$ {{and the product in the above formula is understood in the sense of the duality between $H^{\frac 1 2}$ and $H^{-\frac{1}{2}}$}}\,.
In \cite{Bo,BN} it is proven that the definition above is well-posed, it is  stable with respect to the strong convergence in $H^\frac12(\partial A;\R^2\setminus B_c)$ and that  $\deg(h,\partial A)\in\Z$.
Moreover, if $v\in H^1(A;\R^2\setminus B_c)$ then $\deg(v,\partial A) = 0$ (here and in  what follows we identify $v$ with its trace). Finally, if {$v\in H^1(A;\R^2)$ and}  $|v|=1$ on $\partial A$, by Stokes' theorem (and by approximating $v$ with smooth functions) one has that
\begin{equation}\label{defcur}
\int_A Jv \ud x= \frac{1}{2} \int_A \cu j(v)\ud x := \frac{1}{2} \int_{\partial A} j(v) \cdot \tau \ud\Huno= \pi\deg(v,\partial A).
\end{equation}

Note that any $v\in H^1(A;\R^2\setminus B_c)$ can be written in polar coordinates as $v(x) = \rho(x) e^{\iota u(x)}$ on $\partial A$ with $|\rho|\ge c$. The function $u$ is said to be a {\it lifting } of $v$. By \cite{BZ} (see also \cite[Theorem 3 and Remark 3]{BB}), if $A$ is simply connected, then  $\deg(v,\partial A)=0$ and  the lifting can be selected in $H^{\frac12} (\partial A)$ with the map $v\mapsto u$ being continuous. For $A$ not necessarily simply connected, if $\Gamma$ is a connected component of $\partial A$ and the degree of $v$ on $\Gamma$ is equal to $z\in \Z$, then the lifting jumps on $\Gamma$ by $2\pi z$, but it can be locally selected to belong to $H^{\frac12}$. For $0<r<R$ and $\xi\in\R^2$, let $A_{r,R}(\xi):=B_R(\xi)\setminus\overline{B}_r(\xi)$ be the annulus of radii $r$ and $R$ centered at $\xi$, and let $v\in H^1(A_{r,R}(\xi);\mathcal S^1)$. Then for every cut $L$ such that $A_{r,R}(\xi)\setminus L$ is a simply connected set, there exists a lifting $u\in H^{1}(A_{r,R}(\xi)\setminus L)$ of $v$. Hence, $j(v)=\nabla u$ and from \eqref{defcur} it follows that
\begin{equation*}
\deg(v,\partial B_r(\xi))=\frac {1} {2\pi}\int_{\partial B_r(\xi)}\nabla u\cdot\tau\ud\Huno\,.
\end{equation*}
\par
We introduce a  notion of modified Jacobian (a variant of the notion introduced  in \cite{ABO}),  which  we will use in our $\Gamma$-convergence results.
Given $0<\zeta<1$ we define for $\rho\in [0,+\infty)$ the function $T_\zeta(\rho):= \min\left\{\frac{\rho}{\zeta}, 1\right\}$. If $v\in H^1(\Om;\R^2)$ we set
\begin{equation}\label{moja}
v_{\zeta}:= T_\zeta(|v|) \frac{v}{|v|} \qquad \mbox{ and }\qquad J_\zeta v: = J v_\zeta.
\end{equation}
Note that, for every $v:=(v^1, v^2)$ and $w:=(w^1, w^2)$ belonging  to $H^{1}(\Om;\R^2)$
it holds
\begin{equation}\label{for}
J v - J w = \frac{1}{2} \big(J (v^1 - w^1, v^2 + w^2) - J
(v^2-w^2,v^1+w^1)\big).
\end{equation}
Gathering together \eqref{jwds} and \eqref{for} one deduces the following lemma.
\begin{lemma}\label{cosu}
There exists a universal constant $C>0$ such that for any $v,w\in H^1(\Om;\R^2)$ it holds
$$
\|J v - J w\|_\flap\le C\,\|v - w\|_{2} (\|\nabla v\|_2 + \|\nabla w\|_{2})\,.
$$
\end{lemma}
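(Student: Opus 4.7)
The plan is to combine the polarization-type identity \eqref{for} with the distributional representation \eqref{jwds} and estimate directly against Lipschitz test functions. The point of \eqref{for} is to rewrite $Jv-Jw$ as a sum of Jacobians of pairs in which one component is the small quantity $v^i-w^i$, while the other component $v^j+w^j$ only needs to be controlled in $H^1$.

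More concretely, I would fix $\psi\in C_c^{0,1}(\Omega)$ with $\|\psi\|_{C^{0,1}}\le 1$ and, given $V=(V^1,V^2)\in H^1(\Omega;\R^2)$, start from the identity
\begin{equation*}
\langle JV,\psi\rangle = -\int_\Omega \bigl(V^1 V^2_{x_2}\psi_{x_1}-V^1 V^2_{x_1}\psi_{x_2}\bigr)\ud x
\end{equation*}
provided by \eqref{jwds}. Two applications of Hölder's inequality then yield
\begin{equation*}
|\langle JV,\psi\rangle|\le 2\|\nabla\psi\|_\infty \|V^1\|_2\|\nabla V^2\|_2\le 2\|\psi\|_{C^{0,1}} \|V^1\|_2\|\nabla V^2\|_2.
\end{equation*}

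I would then apply this bound to the two pieces arising from \eqref{for}, namely to $V=(v^1-w^1,v^2+w^2)$ and to $V=(v^2-w^2,v^1+w^1)$. This produces
\begin{equation*}
|\langle Jv-Jw,\psi\rangle|\le \|\psi\|_{C^{0,1}}\Bigl(\|v^1-w^1\|_2\|\nabla(v^2+w^2)\|_2+\|v^2-w^2\|_2\|\nabla(v^1+w^1)\|_2\Bigr).
\end{equation*}
Using $\|v^i-w^i\|_2\le\|v-w\|_2$ and $\|\nabla(v^j+w^j)\|_2\le \|\nabla v\|_2+\|\nabla w\|_2$, and taking the supremum over admissible $\psi$, the stated flat-norm estimate follows with $C=2$.

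No real obstacle is expected: the only point to double-check is that the integration-by-parts formula \eqref{jwds} genuinely applies to all $V\in H^1(\Omega;\R^2)$ (which it does, since the products $V^1 V^2_{x_j}$ are in $L^1$ by Cauchy--Schwarz, and $\psi$ is Lipschitz with compact support, so no boundary term appears). Everything else is a direct Hölder-type bookkeeping, so the proof is short and self-contained.
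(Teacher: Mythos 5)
Your proof is correct and follows exactly the route the paper indicates: the paper itself gives no written proof of Lemma~\ref{cosu} but simply says it is obtained by ``gathering together \eqref{jwds} and \eqref{for},'' and your argument is precisely that, with the Hölder estimates supplied explicitly. The bookkeeping is right (the $\tfrac12$ in \eqref{for} cancels against the factor $2$ from the two partial derivatives, leaving $C=2$), and your remark that \eqref{jwds} holds for all $V\in H^1(\Omega;\R^2)$ because $\psi$ is Lipschitz and compactly supported is the correct justification.
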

As a corollary of Lemma \ref{cosu} we obtain  the following proposition.
\begin{proposition}\label{sempre}
Let $\{v_\ep\}_{\ep}$ be a sequence in $H^1(\Om;\R^2)$ such that $GL_\ep(v_\ep) \le C|\log\e|$, and let $\eta\in(0,\frac 12)$.
Then there exists $C_\eta>0$ such that
\begin{align*}
 \sup_{ \zeta\in(\eta,1-\eta)}\| J v_\ep - J_{\zeta} v_\ep\|_{\flap}\le  C_\eta\,\ep|\log\ep|, \qquad
 \sup_{ \zeta\in(\eta,1-\eta)}|J_{\zeta}v_\ep|(\Omega)\le C_\eta |\log\ep|\,.
\end{align*}
\end{proposition}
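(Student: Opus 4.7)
The plan is to deduce both estimates from Lemma \ref{cosu} applied with $w=v_{\zeta,\ep}:=T_\zeta(|v_\ep|)\,v_\ep/|v_\ep|$, which is the auxiliary map whose Jacobian is exactly $J_\zeta v_\ep$. The two inputs we need to control uniformly in $\zeta\in(\eta,1-\eta)$ are $\|v_\ep-v_{\zeta,\ep}\|_2$ and $\|\nabla v_{\zeta,\ep}\|_2$; all the needed coercivity comes from the hypothesis $GL_\ep(v_\ep)\le C|\log\ep|$, which gives both $\int_\Omega|\nabla v_\ep|^2\,\mathrm{d}x\le C|\log\ep|$ and the potential bound $\int_\Omega(1-|v_\ep|^2)^2\,\mathrm{d}x\le C\ep^2|\log\ep|$.

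For the $L^2$ distance, I would split $\Omega$ into $\{|v_\ep|\ge\zeta\}$ and $\{|v_\ep|<\zeta\}$. On the first set $v_{\zeta,\ep}=v_\ep/|v_\ep|$, hence $|v_\ep-v_{\zeta,\ep}|=\bigl|1-|v_\ep|\bigr|=(1-|v_\ep|^2)/(1+|v_\ep|)\le(1-|v_\ep|^2)/(1+\zeta)$, so integration gives $\int_{\{|v_\ep|\ge\zeta\}}|v_\ep-v_{\zeta,\ep}|^2\,\mathrm{d}x\le C\ep^2|\log\ep|$. On the complementary set $|v_\ep-v_{\zeta,\ep}|\le1-\zeta$ pointwise, while a Chebyshev-type estimate applied to $(1-|v_\ep|^2)^2\ge(1-\zeta^2)^2$ bounds the measure of $\{|v_\ep|<\zeta\}$ by $C\ep^2|\log\ep|/(1-\zeta^2)^2$. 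Since $\zeta\in(\eta,1-\eta)$ forces $1-\zeta^2\ge\eta$, both contributions give $\|v_\ep-v_{\zeta,\ep}\|_2\le C_\eta\,\ep\sqrt{|\log\ep|}$.

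For $\|\nabla v_{\zeta,\ep}\|_2$, writing $v_\ep=\rho\,\varphi$ in polar form on $\{\rho>0\}$ and using $|\nabla v_\ep|^2=|\nabla\rho|^2+\rho^2|\nabla\varphi|^2$, a short direct computation (separately on $\{\rho\ge\zeta\}$, where $T_\zeta'=0$ and $T_\zeta=1$, and on $\{\rho<\zeta\}$, where $T_\zeta'=1/\zeta$ and $T_\zeta(\rho)=\rho/\zeta$) yields the pointwise bound $|\nabla v_{\zeta,\ep}|^2\le\zeta^{-2}|\nabla v_\ep|^2\le\eta^{-2}|\nabla v_\ep|^2$. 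Integrating gives $\|\nabla v_{\zeta,\ep}\|_2\le C_\eta\sqrt{|\log\ep|}$. Plugging these two bounds into Lemma \ref{cosu} immediately produces $\|Jv_\ep-J_\zeta v_\ep\|_\flap\le C_\eta\ep|\log\ep|$, with the constant depending only on $\eta$ because all estimates use only $\zeta\ge\eta$ and $1-\zeta\ge\eta$.

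The second inequality is then a one-line consequence: for any $2\times2$ matrix $M$ one has $|\det M|\le\tfrac12|M|^2$, so $|J_\zeta v_\ep|(\Omega)=\int_\Omega|Jv_{\zeta,\ep}|\,\mathrm{d}x\le\tfrac12\int_\Omega|\nabla v_{\zeta,\ep}|^2\,\mathrm{d}x\le C_\eta|\log\ep|$ by the gradient bound just obtained. The one subtle point, and essentially the only place where care is needed, is ensuring that both the numerator (from the potential term) and the denominator factors $1-\zeta^2$, $\zeta$ are controlled uniformly in $\zeta\in(\eta,1-\eta)$; restricting $\zeta$ to this interval is exactly what allows a single constant $C_\eta$ to absorb them simultaneously, giving the claimed supremum in $\zeta$.
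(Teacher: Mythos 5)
Your proof is correct and follows exactly the route the paper intends: the paper presents Proposition \ref{sempre} as an immediate corollary of Lemma \ref{cosu} without spelling out the computations, and you have supplied precisely the two estimates ($\|v_\ep-(v_\ep)_\zeta\|_{2}\le C_\eta\,\ep\sqrt{|\log\ep|}$ via the potential term and a Chebyshev bound, and $\|\nabla (v_\ep)_\zeta\|_2\le C_\eta\sqrt{|\log\ep|}$ via the pointwise inequality $|\nabla v_\zeta|^2\le\zeta^{-2}|\nabla v|^2$) needed to make that corollary explicit. The second bound then follows from $|\det M|\le\tfrac12|M|^2$, as you note.
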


\noindent {\bf Periodic homogenization of energies defined on $\mathcal{S}^1$-valued maps.}
In the following paragraph we state some useful propositions regarding the periodic homogenization of energy functionals defined on maps from $\R^{2}$ to $\mathcal{S}^1$. The propositions below have been proven in \cite{BM} in the more general case of manifold-valued maps defined on $\R^d$ with $d\in\N$. We specialize them here in the $\mathcal{S}^1$-version that we exploit in the following sections.

 Let $f:\R^2\times \R^{2\times 2}\to [0,+\infty)$ be a Carath\'eodory function satisfying assumptions \eqref{Pprop} and \eqref{Gprop}.
For every $\delta>0$ and for every open bounded set $E\subset\R^2$ we define the functional $\FF_\delta(\cdot,E): L^2(E;\R^2)\to [0,+\infty]$ as
\begin{equation}\label{manhom0}
\FF_{\delta}(v;E)=\begin{cases}
\displaystyle \int_{E}f\Big(\frac x \delta, \nabla v\Big)\ud x&\textrm{if }v\in H^1(E;\mathcal{S}^1)\,,\\
+\infty&\textrm{otherwise}.
\end{cases}
\end{equation}
For every $s=(s^1,s^2)\in \mathcal{S}^1$ we set $s^{\perp}=(-s^2,s^1)$ and
 $\T_s(\mathcal S^1)=\R s^\perp=\{\lambda s^\perp\,:\,\lambda\in\R\}$
denotes the tangent space of $\mathcal S^1$ at the point $s$. We also introduce the set
 $$
 \T\mathcal S^1:=\{(s,M):s\in\mathcal S^1,\,M=s^\perp\otimes\xi, \xi\in\R^2\}
 $$
  and for every $(s,M)=(s,s^\perp\otimes\xi)\in \T\mathcal{S}^1$ we define
\begin{equation}\label{genhom1}
\begin{aligned}
T\!f_{\hom}(s,M):=&\lim_{t\to +\infty}\inf\Big\{\frac{1}{t^2}\int_{tQ}f(y,M+\nabla \phi(y))\ud y\,:\,\phi\in W_0^{1,\infty}(tQ;\T_s(\mathcal{S}^1))\Big\}\,\\
=&\lim_{t\to +\infty}\inf\Big\{\frac{1}{t^2}\int_{tQ}f(y,s^\perp\otimes(\xi+\nabla \ffi(y)))\ud y\,:\,\ffi\in W_0^{1,\infty}(tQ)\Big\}\,,
\end{aligned}
\end{equation}
where $Q:=(0,1)^2$\,.
The function $T\!f_{\hom}$ is called the {\sl tangential homogenization} of the function $f$.

 The function $T\!f_{\hom}$ is a {\sl tangentially quasi-convex} function according to the following definition. We say that a Borel function $h:\T\mathcal{S}^1\to[0,+\infty)$ is tangentially quasi-convex if for all $(s,M)\in\T\mathcal{S}^1$ and all $\varphi\in W^{1,\infty}_{0}(Q;\T_{s}\mathcal{S}^1)$ it holds
\begin{equation}\label{def:3QC}
h(s,M)\leq\int_{Q}h(s,M+\nabla \phi(y))\ud y.
\end{equation}

We note that the function $T\!f_{\hom}$ satisfies the following property:
\begin{eqnarray}\label{2C}
\alpha|M|^2\le T\!f_{\hom}(s,M) \le \beta|M|^2\qquad\textrm{for every }(s,M)\in\T\mathcal S^1\,.
\end{eqnarray}
Moreover, if $f(x,\cdot)$ is $2$-homogeneous for almost every $x\in\R^2$, i.e., if $f(x,\lambda M)=\lambda^2 f(x,M)$ for almost every $x\in\R^2$ and every $M\in\R^{2\times 2},\,\lambda\in\R$,
 then also $T\!f_{\hom}$ satisfies that
\begin{equation*}
T\!f_{\hom}(s,\lambda M)=\lambda^2\, T\!f_{\hom}(s,M)\quad\textrm{for every }(s,M)\in\T\mathcal S^1,\,\lambda\in\R.
\end{equation*}

We define the functional $F_{\hom}(\cdot; E): L^2(E;\R^2)\to [0,+\infty]$ as
\begin{equation}\label{manhom00}
F_{\hom}(v;E)=\begin{cases}
\displaystyle \int_{E}T\!f_{\hom} (v(x),\nabla v(x))\ud x&\textrm{if }v\in H^1(E;\mathcal{S}^1)\,,\\
+\infty&\textrm{otherwise}.
\end{cases}
\end{equation}
The following theorem has been proven in \cite[Theorem 1.1]{BM}.

\begin{theorem}\label{homogen_thm0}
Let $\{\FF_{\delta}(\cdot;E)\}_{\delta}$ be the sequence of functionals defined in \eqref{manhom0}. Then, as $\delta\to 0$, $\{\FF_{\delta}(\cdot;E)\}_{\delta}$ $\Gamma$-converge with respect to the strong $L^2$-convergence to the functional $F_{\hom}(\cdot; E)$ in \eqref{manhom00}.
\end{theorem}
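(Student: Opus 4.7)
The plan is to establish Γ-convergence in the strong $L^2$-topology by separately proving the liminf and limsup inequalities, while handling the manifold constraint $|v|=1$ via local tangent-space linearisation.

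\textbf{Compactness and setup.} First, any sequence $\{v_\delta\}\subset H^1(E;\mathcal S^1)$ with $\sup_\delta \FF_\delta(v_\delta;E)<+\infty$ satisfies $\|\nabla v_\delta\|_{L^2}\le C$ by \eqref{Gprop}; since $|v_\delta|=1$ a.e., Rellich compactness gives $v_\delta\to v$ in $L^2$ along a subsequence with $v\in H^1(E;\mathcal S^1)$ and $v_\delta\weakly v$ in $H^1$.

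\textbf{Liminf inequality.} I would use the blow-up method of Fonseca–Müller. Set $\mu_\delta:=f(\cdot/\delta,\nabla v_\delta)\mathcal L^2\res E$; up to a subsequence $\mu_\delta\weakstar\mu$ as positive measures on $E$. It suffices to show that for $\mathcal L^2$-a.e. $x_0\in E$ one has
\begin{equation*}
\frac{\id\mu}{\id\mathcal L^2}(x_0)\ge T\!f_{\hom}\bigl(v(x_0),\nabla v(x_0)\bigr).
\end{equation*}
Fix such an $x_0$, set $s:=v(x_0)\in\mathcal S^1$ and $M:=\nabla v(x_0)=s^\perp\otimes\xi$ (the latter identity follows from $|v|=1$). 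On $B_r(x_0)$ with $r$ small, $v$ is close to $s$, so after a standard cut-off one can write $v_\delta= s\, e^{\iota\varphi_\delta}$ near $x_0$ with a scalar lifting $\varphi_\delta$ satisfying $\nabla\varphi_\delta\approx\xi$ in $L^2$-average and producing energy density comparable to $f(\cdot/\delta, s^\perp\otimes(\xi+\nabla\tilde\varphi_\delta))$ for a small perturbation $\tilde\varphi_\delta$. A diagonal extraction together with the scaled cell problem \eqref{genhom1} then yields the inequality. Throughout, the key structural input is that $\nabla v(x_0)\in\T\mathcal S^1$ almost everywhere, which allows the tangentially quasiconvex envelope $T\!f_{\hom}$ to play the role of the homogenized density.

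\textbf{Limsup inequality.} By density (and the continuity of the target functional with respect to strong $H^1$-convergence, which is standard for tangentially quasiconvex integrands satisfying \eqref{2C}), it suffices to produce a recovery sequence for $v$ that is piecewise ``exponentially affine'', i.e.\ of the form $v(x)=s_k\,e^{\iota\xi_k\cdot(x-x_k)}$ on small cubes $Q_k$ partitioning $E$. On each $Q_k$, pick a near-optimal $\ffi_k\in W^{1,\infty}_0(t_kQ)$ in \eqref{genhom1} at scale $t_k\to+\infty$, extend it by $Q$-periodicity, rescale to $\delta$-periodicity and set $v_\delta(x):=s_k\,e^{\iota(\xi_k\cdot(x-x_k)+\delta\,\ffi_k(x/\delta))}$ on the interior of $Q_k$. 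By construction $v_\delta\in H^1(Q_k;\mathcal S^1)$ and a direct computation plus periodicity give $\FF_\delta(v_\delta;Q_k)\to \int_{Q_k}T\!f_{\hom}(v,\nabla v)\ud x$. The pieces are glued by a De Giorgi–type fundamental-estimate/partition-of-unity argument applied to the phases: in a thin transition layer between adjacent cubes one interpolates the scalar phases while keeping values in $\mathcal S^1$; the energy of the layer is controlled by the $L^\infty$ bound on $\nabla\ffi_k$ and the thinness of the layer, and can be made negligible by a standard averaging on a grid of cut-offs.

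\textbf{Main obstacle.} The decisive difficulty is the incompatibility between the unconstrained, linear nature of cell problems in \eqref{genhom1} (test functions in $W^{1,\infty}_0$) and the nonlinear $\mathcal S^1$-constraint on competitors in $\FF_\delta$. The device that overcomes it, both for the blow-up linearisation in the liminf and the gluing in the limsup, is to pass to the scalar phase via the exponential parameterisation $v=s\,e^{\iota\varphi}$, which is legitimate only where $v$ is close to a constant $s$ and requires a careful control of the lifting—this is where the tangential structure $M=s^\perp\otimes\xi$ enters critically and where the full argument of \cite{BM} is needed.
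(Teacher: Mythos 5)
The paper does not actually prove Theorem \ref{homogen_thm0}: it is stated and immediately attributed to \cite[Theorem~1.1]{BM} (Babadjian--Millot), and the text moves on. So there is no ``paper's proof'' to compare against; you have sketched a proof of a result that is merely imported.

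That said, your architecture --- Fonseca--M\"uller blow-up for the lower bound and a piecewise ``exponentially affine'' corrector construction with De Giorgi gluing for the upper bound --- is the expected shape of a proof of this kind of manifold-constrained homogenization statement, and you correctly isolate the central structural fact (that $\nabla v(x_0)\in\T\mathcal{S}^1$ a.e., so the tangentially relaxed density $T\!f_{\hom}$ is the right candidate) and the central obstruction (the mismatch between unconstrained cell competitors and the $\mathcal{S}^1$ constraint).

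Where the sketch has genuine gaps. In the liminf step you assert that ``after a standard cut-off one can write $v_\delta = s\,e^{\iota\varphi_\delta}$ near $x_0$'', but $v_\delta\to v$ only in $L^2$ and the oscillation of $v_\delta$ on a fixed small ball is not controlled pointwise; the existence of a single-valued lifting requires zero degree on the boundary of the blow-up domain, which must be extracted by Fubini/good-radii arguments and is exactly the kind of point that cannot be waved away. In the limsup step, the construction $v_\delta=s_k\,e^{\iota(\xi_k\cdot(x-x_k)+\delta\ffi_k(x/\delta))}$ has gradient $v_\delta^\perp\otimes(\xi_k+\nabla\ffi_k(x/\delta))$, \emph{not} $s_k^\perp\otimes(\xi_k+\nabla\ffi_k(x/\delta))$; since $f$ depends on the full matrix, you must additionally invoke uniform continuity of $f$ in the matrix variable on compacta and the smallness of $|v_\delta-s_k|$ on a fine grid to replace $v_\delta^\perp$ by $s_k^\perp$ with a controllable error. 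Likewise the gluing of liftings across cubes requires the phases to be compatible modulo $2\pi$, a topological point that needs the cubes small and $v$ smooth (so one must first reduce to smooth $\mathcal{S}^1$-valued maps, using the fact that in two dimensions smooth maps are $H^1$-dense in $H^1(E;\mathcal{S}^1)$). None of these gaps is conceptual, and you acknowledge that ``the full argument of \cite{BM} is needed'', but as written the proposal is a roadmap rather than a proof; the paper's choice to cite \cite{BM} is precisely because these technicalities are substantial.
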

\begin{remark}\label{cutannforpsi}
{\rm
Note that if $f$ is of the form
\begin{equation}\label{simplf}
 f(y,M)=a(y)|M|^2 \quad\textrm{ for some $Q$-periodic Borel function } a:\R^2\to [\alpha,\beta],
 \end{equation}
then for every $(s,M)=(s,s^\perp\otimes\xi)\in\T\mathcal S^1$ we have that $f(y,M)=a(y)|\xi|^2$. Therefore, by \eqref{genhom1} and by standard homogenization results of quadratic forms (see \cite[Theorem 14.7]{BrDef}),
$$
T\!f_{\hom}(s,M)=\langle \ho\xi,\xi\rangle,
$$
where $\ho$ is the symmetric matrix defined in \eqref{defahom}.
}
\end{remark}
For every $0<r<R$ and for every $x\in\R^2$ we set $A_{r,R}(x):=B_R(x)\setminus\overline{B}_r(x)$ and  $A_{r,R}:=A_{r,R}(0)$.
Moreover, for every $z\in\Z\setminus\{0\}$ we define
\begin{equation*}
\Anew_{r,R}(z):=\{w\in H^1(A_{r,R};\mathcal{S}^1)\,:\,\deg(w,\partial B_r)=z\}\,.
\end{equation*}

Given $z\in\Z\setminus\{0\}$, for every $\delta>0$ we define the functionals $\FF_{\delta}^z(\cdot; A_{r,R}): H^1(A_{r,R};\R^2)\to [0,+\infty]$ as
\begin{equation*}
\FF_{\delta}^z(v;A_{r,R}):=\begin{cases}
\displaystyle \FF_\delta(v;A_{r,R})&\textrm{if }v\in\Anew_{r,R}(z)\,,\\
+\infty&\textrm{otherwise},
\end{cases}
\end{equation*}
and $\FF_{\hom}^z:H^1(A_{r,R};\R^2)\to [0,+\infty]$ as
\begin{equation}\label{homoF}
\FF_{\hom}^z(v;A_{r,R}):=\begin{cases}
\displaystyle \FF_{\hom}(v;A_{r,R})&\textrm{if }v\in\Anew_{r,R}(z)\,.\\
+\infty&\textrm{otherwise},
\end{cases}
\end{equation}
The next result is a consequence of Theorem \ref{homogen_thm0}.

\begin{theorem}\label{homogen_thm}
Let $z\in\Z\setminus\{0\}$ and let $\FF_{\delta}^z(\cdot;A_{r,R})$ be the functional defined in \eqref{homoF}. Then, as $\delta\to 0$, $\FF_{\delta}^z(\cdot;A_{r,R})$ $\Gamma$-converge with respect to the strong $L^2$-convergence to the functional $F_{\hom}^z(\cdot; A_{r,R})$.
\end{theorem}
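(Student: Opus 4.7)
The plan is to deduce this from Theorem \ref{homogen_thm0} by showing that the degree constraint $\deg(w,\partial B_r)=z$ is compatible both with the compactness supplied by the coercivity in \eqref{Gprop} and with the recovery sequences coming from the unconstrained result.

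First I will establish a \emph{degree-stability lemma}: if $\{w_\delta\}\subset H^1(A_{r,R};\mathcal S^1)$ and $w_\delta\weakly w$ in $H^1(A_{r,R};\R^2)$, then $\deg(w_\delta,\partial B_r)\to \deg(w,\partial B_r)$. I would fix a smooth cutoff $\psi\in C^\infty(\overline{A_{r,R}})$ with $\psi\equiv 1$ on $\partial B_r$ and $\psi\equiv 0$ on $\partial B_R$ and use the representation
\begin{equation*}
2\pi\deg(w,\partial B_r)=\int_{A_{r,R}}j(w)\cdot\nabla^\perp\psi\,\ud x,
\end{equation*}
which follows from \eqref{defcur} applied on a smooth exhaustion together with the identity $Jw=\tfrac12\cu\, j(w)=0$ a.e.~(valid because $|w|=1$ a.e.). Since $w_\delta\to w$ strongly in $L^2$ (by Rellich) and $\nabla w_\delta\weakly\nabla w$ weakly in $L^2$, the bilinear expression $j(w_\delta)=w_\delta^1\nabla w_\delta^2-w_\delta^2\nabla w_\delta^1$ converges to $j(w)$ in the sense of distributions; testing against the fixed smooth vector field $\nabla^\perp\psi$ passes to the limit, proving the claim. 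Since degrees are integer-valued, in fact $\deg(w_\delta,\partial B_r)=\deg(w,\partial B_r)$ for all $\delta$ small enough.

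For the $\Gamma$-liminf inequality, I take $v_\delta\to v$ in $L^2(A_{r,R};\R^2)$ with $v_\delta\in\Anew_{r,R}(z)$, assuming $\liminf_{\delta\to 0}\FF_\delta^z(v_\delta;A_{r,R})<+\infty$ (otherwise the bound is trivial). Coercivity \eqref{Gprop} forces $\{v_\delta\}$ to be bounded in $H^1$, so up to subsequences $v_\delta\weakly v$ in $H^1$; the strong $L^2$ convergence together with $|v_\delta|=1$ gives $|v|=1$ a.e., hence $v\in H^1(A_{r,R};\mathcal S^1)$. The stability lemma then yields $\deg(v,\partial B_r)=z$, i.e.~$v\in\Anew_{r,R}(z)$, and Theorem \ref{homogen_thm0} gives
\begin{equation*}
\liminf_{\delta\to 0}\FF_\delta^z(v_\delta;A_{r,R})=\liminf_{\delta\to 0}\FF_\delta(v_\delta;A_{r,R})\ge F_{\hom}(v;A_{r,R})=F_{\hom}^z(v;A_{r,R}).
\end{equation*}

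For the $\Gamma$-limsup inequality, given $v\in\Anew_{r,R}(z)$, I invoke Theorem \ref{homogen_thm0} to produce $\tilde v_\delta\to v$ in $L^2$ with $\tilde v_\delta\in H^1(A_{r,R};\mathcal S^1)$ and $\limsup_{\delta\to 0}\FF_\delta(\tilde v_\delta;A_{r,R})\le F_{\hom}(v;A_{r,R})<+\infty$, finiteness coming from \eqref{2C}. Once more by \eqref{Gprop}, $\{\tilde v_\delta\}$ is bounded in $H^1$, so $\tilde v_\delta\weakly v$ in $H^1$, and the stability lemma forces $\deg(\tilde v_\delta,\partial B_r)=z$ for all $\delta$ small enough. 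Hence $\tilde v_\delta\in\Anew_{r,R}(z)$ and $\FF_\delta^z(\tilde v_\delta;A_{r,R})=\FF_\delta(\tilde v_\delta;A_{r,R})$ eventually, closing the argument. The only genuinely subtle point is the stability lemma: the degree is \emph{not} continuous under weak $H^{1/2}$ convergence of traces in general, but the $\mathcal S^1$-valuedness (through $Jw=0$) allows us to move the computation into the interior of the annulus, where weak $H^1$ convergence of $w_\delta$ combined with $L^2$-compactness suffices to pass to the limit.
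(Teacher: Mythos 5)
Your proposal is correct, and it takes a genuinely different route from the paper's. Both reduce the theorem to the stability of the degree constraint under weak $H^1$ convergence of $\mathcal S^1$-valued maps, as you observe; what differs is how that stability is established. The paper uses a Fubini slicing argument: it shows that for a.e.\ $\rho\in(r,R)$ the traces $w_\delta\big|_{\partial B_\rho}$ are (along a subsequence) bounded in $H^1(\partial B_\rho)$, hence converge weakly there, and since they must agree with the strong $L^2$ limit $w_0$, one concludes $\deg(w_0,\partial B_r)=z$ by the $H^{1/2}$-stability of the degree recalled in the preliminaries. Your proof instead moves the degree computation \emph{into the interior} via a radial cutoff $\psi$ and the identity $2\pi\deg(w,\partial B_r)=\pm\int_{A_{r,R}}j(w)\cdot\nabla^\perp\psi\,\ud x$ (valid because $Jw=\tfrac12\mathrm{curl}\,j(w)=0$ for $\mathcal S^1$-valued maps), and then passes to the limit by compensated compactness for $j(w_\delta)$. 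This avoids choosing $\rho$-dependent subsequences and gives quantitative continuity of the degree directly, which you then exploit together with integer-valuedness to handle both the liminf and limsup directions. You might double-check the sign in your degree formula (with your normalization $\psi(r)=1$, $\psi(R)=0$, the integral equals $-2\pi\deg$), but this does not affect the argument. You also state the limsup reasoning more explicitly than the paper, which merely asserts ``it is enough to prove the constraint is closed''; strictly speaking closedness of the constraint set for the fixed $z$ is not by itself sufficient for the limsup, and one needs (as you supply, via continuity plus integrality of the degree, or equivalently closedness for all $z$ together with a uniform bound on degrees coming from the energy bound) that unconstrained recovery sequences eventually satisfy the degree constraint.
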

\begin{proof}
It is enough to prove that the constraint $\deg(v,\partial B_r)=z$ is closed with respect to the weak convergence in $H^1(A_{r,R}; \R^2)$\,.
Let $\{w_\delta\}_\delta\subset \Anew_{r,R}(z)$ be such that
$w_\delta\weakly w_0$ in $H^1(A_{r,R};\R^2)$ for some $w_0\in H^1(A_{r,R};\mathcal S^1)$\,.
 By standard Fubini arguments, for almost every $r<\rho<R$, we have that the trace of $w_\delta$ on $\partial B_\rho$ is bounded in $H^1(\partial B_\rho;\mathcal S^1)$ and hence (up to a not relabeled subsequence) it weakly converges to a function $g_\rho$\,. Since $\|w_\delta-w_0\|_{L^2(A_{r,R};\R^2)}\to 0$\,, we get that $g_\rho=w_0$ for a.e. $\rho\in (r,R)$. By the very definition of degree in \eqref{defcur}, $\deg(w_0,\partial B_r)=z$ and hence $w_0\in\Anew_{r,R}(z)$.
\end{proof}
The following corollary holds true as a consequence of \eqref{Gprop}, \eqref{2C}, Theorem \ref{homogen_thm} and thanks to the well-known property of convergence of minima in $\Gamma$-convergence (see \cite{B02, B06, DM}).
\begin{corollary}\label{homdeg0}
Let $z\in\Z\setminus\{0\}$. Then, for every $0<r<R$, it holds
\begin{equation*}
\lim_{\delta\to 0}\inf_{w\in\Anew_{r,R}(z)}F_{\delta}(w;A_{r,R})=\min_{w\in\Anew_{r,R}(z)}F_{\hom}(w;A_{r,R})\,.
\end{equation*}
\end{corollary}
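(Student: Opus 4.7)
My plan is to derive the corollary as a direct consequence of the general principle that $\Gamma$-convergence together with equicoercivity implies convergence of the minimum values and attainment of the minimum in the limit (see e.g.\ \cite{B02, B06, DM}). Theorem \ref{homogen_thm} already supplies the strong $L^2$-$\Gamma$-convergence of $F_{\delta}^z(\cdot;A_{r,R})$ to $F_{\hom}^z(\cdot;A_{r,R})$, so what remains is to verify equicoercivity in this topology.

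To establish equicoercivity I would take any sequence $\{w_\delta\}_\delta$ with $F_\delta^z(w_\delta;A_{r,R})\le C$. Finiteness of the energy forces $w_\delta\in\Anew_{r,R}(z)$, hence $|w_\delta|=1$ a.e.\ and $\|w_\delta\|_{L^2(A_{r,R};\R^2)}^2=|A_{r,R}|$. The lower bound in \eqref{Gprop} gives $\alpha\|\nabla w_\delta\|_{L^2(A_{r,R})}^2\le C$, so $\{w_\delta\}_\delta$ is bounded in $H^1(A_{r,R};\R^2)$. By Rellich's compactness theorem a subsequence converges strongly in $L^2$ to some $w_0$, and a further a.e.-convergent subsequence yields $|w_0|=1$ a.e.; by the degree-stability argument already carried out in the proof of Theorem \ref{homogen_thm}, $\deg(w_0,\partial B_r)=z$, so $w_0\in\Anew_{r,R}(z)$. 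This is exactly the equicoercivity in the strong $L^2$-topology.

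At this point the convergence-of-minima theorem gives $\lim_{\delta\to 0}\inf_{w\in\Anew_{r,R}(z)}F_{\delta}(w;A_{r,R})=\min_{w\in\Anew_{r,R}(z)}F_{\hom}(w;A_{r,R})$, where the attainment of the right-hand side is a further consequence of the same equicoercivity applied to a minimizing sequence for $F_{\hom}^z$ (this also uses the $L^2$-lower semicontinuity of $F_{\hom}^z$, which is automatic since it is a $\Gamma$-limit). The class $\Anew_{r,R}(z)$ is nonempty and its infimum is finite, as the map $w(x)=(x/|x|)^z$ belongs to $\Anew_{r,R}(z)$ and has finite energy in view of the upper bound in \eqref{2C}. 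I do not foresee any genuine obstacle here: the only potentially delicate point, namely the preservation of the degree condition under $L^2$-convergence, has already been resolved inside the proof of Theorem \ref{homogen_thm}, which is what makes the present corollary a truly routine consequence of that result.
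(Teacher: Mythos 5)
Your proposal is correct and follows the same route the paper takes: the paper states the corollary as an immediate consequence of Theorem \ref{homogen_thm}, the bounds \eqref{Gprop} and \eqref{2C}, and the standard $\Gamma$-convergence-plus-equicoercivity implies convergence-of-minima principle, and you have simply spelled out the equicoercivity step (lower bound in \eqref{Gprop} gives $H^1$-boundedness of sublevel sets, Rellich gives $L^2$-precompactness, and the degree stability proved inside Theorem \ref{homogen_thm} keeps the limit in $\Anew_{r,R}(z)$). No gap; this is a faithful expansion of the paper's terse justification.
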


\section{The effective energy of a singularity}
In this section we introduce and discuss the properties of the minimal energy cost $\Psi(z;h)$ of a vortex like singularity of degree $z$ for a homogeneous quadratic functional of energy density $h$ defined on $\mathcal S^{1}$-valued maps.
The function $\Psi(\cdot;h)$ is crucial in order to determine the $\Gamma$-limits for both the cases $\delta_\ep\lesssim\ep$ and $\delta_\ep\gg\ep$\,, choosing $h=T\!f_{\hom}$\,, with  $T\!f_{\hom}$ defined in \eqref{genhom1}.
On the one hand (see Section \ref{sec:delta<ep}) for $\delta_\ep\lesssim\ep$\,,  $\Psi(z;T\!f_{\hom})$ turns out to be the effective energy cost of a singularity of degree $z$  (see Theorems \ref{mainthmcradopo} and \ref{mainthmgldopo}).
On the other hand (see Section \ref{sec:delta>ep})  for $\delta_\ep\gg\ep$\,, recalling the definition of $\lambda$ in \eqref{limlog}, we have that
 $\lambda\Psi(z;T\!f_{\hom})$ is the effective energy cost of a singularity of degree $z$ on scales of order between $\delta_\ep$ and $1$\, (see Theorems \ref{cradelta>ep} and \ref{gldelta>ep})\,.

Let $h:\T\mathcal S^1\to[0,+\infty)$ be a continuous function,  tangentially quasi-convex according to \eqref{def:3QC},
and such that
\begin{equation}
\label{3H}
h(s,\lambda M)=\lambda^2\, h(s,M)\,,\quad\textrm{for every }(s,M)\in \T\mathcal S^1\textrm{ and }\forall\,\lambda\in\R\,.
\end{equation}
Assume moreover that there exist $\alpha,\beta$ such that $0<\alpha\le\beta$ and
\begin{equation}
\label{3C}
\alpha|M|^2\le h(s,M) \le \beta|M|^2\,,\quad\textrm{for every }(s,M)\in \T\mathcal S^1\,.
\end{equation}
 For every open bounded set $E\subset\R^2$ we define the functional $H(\cdot;E):H^1(E;\mathcal S^1)\to[0,+\infty)$ as
\begin{equation*}
H(w;E):=\int_{E} h(w(x),\nabla w(x))\ud x\,.
\end{equation*}
Given $z\in\Z\setminus\{0\}$ and $0<r<R$ we set
\begin{equation}\label{glim0}
\psi_{r,R}(z;h):=\frac{1}{\log \frac R r}\min_{w\in\Anew_{r,R}(z)}H(w;A_{r,R})\,.
\end{equation}
Making the  change of variable $y=\frac{x}{r}$ and considering the $2$-homogeneity \eqref{3H} of the function $h$ we conclude that,
for every $w\in H^1(A_{r,R};\mathcal{S}^1)$, the following relation holds:
\begin{equation}\label{scal0}
\begin{aligned}
H(w;A_{r,R})=&\int_{A_{r,R}} h(w(x),\nabla w(x))\ud x=\int_{A_{1,\frac{R}{r}}} h(\hat w(y),\nabla \hat{w}(y))\ud y=H(\hat w;A_{1,\frac{R}{r}})\,,
\end{aligned}
\end{equation}
where $\hat{w}(y):=w(ry)$. Gathering together \eqref{scal0} and \eqref{glim0} we deduce that
\begin{equation}\label{psiscal}
\psi_{r,R}(z;h)=\psi_{1,\frac{R}{r}}(z;h)\,.
\end{equation}
\begin{proposition}\label{prop:glim2}
Let $h:\T\mathcal S^1\to[0,+\infty)$ be a Carath\'eodory function satisfying \eqref{3C} and \eqref{3H}. For $z\in\Z\setminus\{0\}$ and $0<r<R$ let $\psi_{r,R}(z,h)$ be the function defined in \eqref{glim0}. Then there exists the limit
\begin{equation}\label{form:glim2}
\psi(z;h):=\lim_{\frac{R}{r}\to +\infty} \psi_{r,R}(z;h).
\end{equation}
\end{proposition}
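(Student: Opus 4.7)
The plan is to use the scaling invariance (\ref{psiscal}) to reduce to the one-parameter problem $t := R/r \to +\infty$, and then derive existence of $\lim_{t \to +\infty} m(t)/\log t$ (with $m(t) := \min_{w \in \Anew_{1,t}(z)} H(w; A_{1,t})$) from a super-additivity property of $m$ in the $\log$-scale. First I would record the a~priori bounds $2\pi \alpha z^2 \log t \le m(t) \le 2\pi \beta z^2 \log t$. The upper bound follows from evaluating $H$ on the radial competitor $w(x) = (x/|x|)^z$, whose gradient has squared norm $z^2/|x|^2$, so by \eqref{3C}, $H(w; A_{1,t}) \le 2\pi \beta z^2 \log t$. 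The lower bound uses \eqref{3C} together with the elementary fact that any $w \in \Anew_{1,t}(z)$, written in polar coordinates $(\rho, \theta)$, satisfies $\int_0^{2\pi} |\partial_\theta w|^2 \, d\theta \ge 2\pi z^2$ on a.e.\ circle (by Cauchy--Schwarz and the degree condition), which upon integration in $\rho \in (1,t)$ yields $H(w; A_{1,t}) \ge 2\pi \alpha z^2 \log t$. In particular $\psi_{1,t}(z;h)$ is trapped in the fixed interval $[2\pi \alpha z^2, 2\pi \beta z^2]$.

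The core of the argument is the following super-additivity estimate: for every $t_1, t_2 > 1$,
\begin{equation*}
m(t_1 t_2) \ge m(t_1) + m(t_2).
\end{equation*}
To prove this, let $w^\ast \in \Anew_{1,t_1 t_2}(z)$ be a (quasi-)minimizer for $m(t_1 t_2)$, and split $A_{1, t_1 t_2} = A_{1, t_1} \cup A_{t_1, t_1 t_2}$. The restriction $w^\ast|_{A_{1, t_1}}$ is clearly admissible for $m(t_1)$, so $H(w^\ast; A_{1, t_1}) \ge m(t_1)$. On the other hand, since $Jw^\ast \equiv 0$ for any $\mathcal S^1$-valued $H^1$ map, the degree of $w^\ast$ is conserved on every circle $\partial B_\rho$ with $\rho \in (1, t_1 t_2)$; in particular $\deg(w^\ast, \partial B_{t_1}) = z$. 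The rescaled map $\hat w(y) := w^\ast(t_1 y)$ therefore belongs to $\Anew_{1, t_2}(z)$, and by the 2-homogeneity \eqref{3H} of $h$, exactly as in the scaling computation \eqref{scal0}, one has $H(w^\ast; A_{t_1, t_1 t_2}) = H(\hat w; A_{1, t_2}) \ge m(t_2)$. Summing the two inequalities proves the claim.

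Setting $M(s) := m(e^s)$, the above reads $M(s_1 + s_2) \ge M(s_1) + M(s_2)$ for all $s_1, s_2 > 0$; combined with $0 \le M(s) \le 2\pi \beta z^2 s$, the continuous version of Fekete's lemma (applied to the super-additive function $M$, or equivalently to the sub-additive function $-M$) gives
\begin{equation*}
\lim_{s \to +\infty} \frac{M(s)}{s} = \sup_{s > 0} \frac{M(s)}{s} \in [2\pi \alpha z^2, 2\pi \beta z^2],
\end{equation*}
which in the original variables is exactly the existence of $\psi(z;h)$ in \eqref{form:glim2}. The proof is short because the 2-homogeneity and the $\mathcal S^1$-valued constraint together make super-additivity a direct consequence of restriction and rescaling; the only delicate point to write out explicitly is the conservation of the degree across $\partial B_{t_1}$, which hinges on $w^\ast$ being $\mathcal S^1$-valued so that its distributional Jacobian vanishes. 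No gluing or transition layer is needed, which is the main reason the super-additivity route is preferable here to the more standard sub-additivity approach that would require matching boundary traces.
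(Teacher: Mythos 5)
Your proof is correct and matches the paper's argument in its essential structure: both decompose the large annulus into concentric sub-annuli, use conservation of the degree across circles (which you justify via the vanishing of the distributional Jacobian for $\mathcal S^1$-valued $H^1$ maps) together with the $2$-homogeneity \eqref{3H} to rescale each sub-annulus to the reference one $A_{1,\rho}$, and conclude that the minimal energy grows at least linearly in $\log R$. The paper realizes this comparison as a pigeonhole estimate on the cheapest $\rho$-adic annulus of a minimizer, yielding $\psi_{1,R}\ge\bigl(1-\tfrac{\log\rho}{\log R}\bigr)\psi_{1,\rho}$ and then passing to the limit first in $R$ and then in $\rho$, whereas you abstract the same decomposition-plus-rescaling step to super-additivity of $M(s):=m(e^s)$ and invoke Fekete's lemma---an equivalent and cleanly packaged formulation.
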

\begin{proof}
In view of \eqref{psiscal}, it is enough to prove the inequality
\begin{equation}\label{glim2:claim}
\limsup_{R\to +\infty}\psi_{1,R}(z;h)\le \liminf_{R\to +\infty} \psi_{1,R}(z;h)\,.
\end{equation}
For $\rho\in\mathbb R$ with $1<\rho<R$ we define $K_{R,\rho}:=\lfloor \frac{\log R}{\log \rho}\rfloor$ and note that
$$
A_{1,R}\supset \bigcup_{k=1}^{K_{R,\rho}}A_{\rho^{k-1},\rho^k}(\rho)\,.
$$
Denoting by $w_R$ a minimizer of \eqref{glim0}, letting $\bar k=\bar k_{R,\rho}\in\{1,\ldots,K_{R,\rho}\}$ be such that
$$
H(w_R;A_{\rho^{\bar k-1},\rho^{\bar k}})\le H(w_R;A_{\rho^{k-1},\rho^{k}})\qquad \textrm{for all }k=1,\ldots,K_{R,\bar\rho}\,,
$$
and setting $\hat w_{\rho,\bar k}(y):=w_R(\rho^{\bar k-1}y)$, we obtain
\begin{equation*}
\begin{aligned}
\min_{w\in \Anew_{1,R}(z)}
H(w;A_{1,R})&\ge \sum_{k=1}^{K_{R,\rho}} H(w_R;A_{\rho^{k-1},\rho^{k}})
\ge K_{R,\rho}\,  H(w_R;A_{\rho^{\bar k-1},\rho^{\bar k}})\\
&=K_{R,\rho}H(\hat w_{\rho,\bar k};A_{1,\rho})\,,
\end{aligned}
\end{equation*}
where the last equality follows by \eqref{3H}. By the very definition of $K_{R,\rho}$ we conclude that
\begin{equation*}
\psi_{1,R}(z;h)\ge K_{R,\rho}\frac{\log\rho}{\log R}\, \psi_{1,\rho}(z;h)\ge \Big(1-\frac{\log\rho}{\log R}\Big)\psi_{1,\rho}(z;h).
\end{equation*}
The inequality above yields \eqref{glim2:claim} on taking first the limit as $R\to +\infty$ and then as $\rho\to +\infty$.
\end{proof}
Notice that
\begin{equation}\label{quapsi}
2\pi\alpha|z|^2\le \psi(z;h)\le 2\pi\beta|z|^2\qquad\textrm{for every }z\in\Z\,,
\end{equation}
where $\alpha$ and $\beta$ are the constants appearing in \eqref{3C}.
We define the function $\Psi(\cdot; h):\Z\to[0,+\infty)$ as
\begin{equation}\label{defPsi}
\Psi(z; h):=\inf\biggl\{\sum_{j=1}^J\psi(z_{j};h)\,:\,\sum_{j=1}^{J}z_{j}=z\,, J\in \N,\,z_{j}\in\Z\biggr\}.
\end{equation}
\begin{remark}\label{propePsi}
\rm{
It follows from \eqref{quapsi} that the infimum in problem  \eqref{defPsi} is actually a minimum and
\begin{equation}\label{linPsi}
2\pi\alpha|z|\le \Psi(z;h)\le 2\pi\beta|z|\qquad\textrm{for every }z\in\Z\,.
\end{equation}
Moreover, by definition,  the function $\Psi(\cdot;h)$ is sub-additive, i.e.,
$$
\Psi(z_1+z_2;h)\le\Psi(z_1;h)+\Psi(z_2;h)\qquad\textrm{for every }z_1,z_2\in\Z\,.
$$
Such a property implies that the functional $\Fs(\cdot;h):X(\Omega)\to [0,+\infty)$ defined by
$$
\Fs(\mu;h):=\sum_{i=1}^n\Psi(z_i;h)\qquad\textrm{for every }\mu=\sum_{i=1}^{n}z_i\delta_{x_i}
$$
is lower semi-continuous with respect to the flat convergence, while \eqref{linPsi} yields
$$
2\pi\alpha|\mu|(\Omega)\le \Fs(\mu;h)\le 2\pi\beta|\mu|(\Omega)\,.
$$
}
 \end{remark}

In the next proposition we show that if $f$ is of the form in \eqref{simplf}, then $\Psi(z;T\!f_{\hom})$ equals $|z|$ up to a constant pre-factor.

\begin{proposition}\label{prop:psisimplf}
 Let $f:\R^2\times \R^{2\times 2}\to [0,+\infty)$ satisfy \eqref{simplf}. Then
 \begin{equation}\label{psisimplf}
 \Psi(z;T\!f_{\hom})=2\pi\sqrt{\det\ho} |z|\qquad\textrm{for every }z\in\Z,
 \end{equation}
 where $\ho$ is defined in \eqref{defahom}.
 \end{proposition}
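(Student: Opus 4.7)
My plan is to reduce the problem to an isotropic Dirichlet minimization on an elliptical annulus through a linear change of variables, compute $\psi(z;T\!f_{\hom})$ in this reduced setting, and then apply the infimum formula \eqref{defPsi} to deduce $\Psi$.

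By Remark \ref{cutannforpsi}, when $f$ is of the form \eqref{simplf} we have $T\!f_{\hom}(s, s^\perp\otimes\xi) = \langle \ho \xi, \xi\rangle$ independently of $s$, so for any local lifting $u$ of $w=e^{\iota u}$, $T\!f_{\hom}(w,\nabla w) = \langle \ho\nabla u,\nabla u\rangle$. Setting $B:=\ho^{1/2}$ (the positive-definite square root of $\ho$), $y:=B^{-1}x$, and $\tilde u(y):=u(By)$, the identities $\nabla_x u = B^{-1}\nabla_y\tilde u$ and $\ud x = \sqrt{\det\ho}\,\ud y$ give
\begin{equation*}
\int_{A_{r,R}} \langle\ho\nabla u,\nabla u\rangle\,\ud x = \sqrt{\det\ho} \int_{\mathcal E_{r,R}} |\nabla\tilde u|^2\,\ud y,
\end{equation*}
where $\mathcal E_{r,R}:=B^{-1}(A_{r,R})=\{y:r^2<\langle\ho y,y\rangle<R^2\}$ is the annular region between two similar concentric ellipses; the circulation of $\nabla\tilde u$ around the inner ellipse is preserved and equals $2\pi z$.

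For $z\in\Z\setminus\{0\}$ I would prove $\psi(z;T\!f_{\hom}) = 2\pi z^2 \sqrt{\det\ho}$ by matching upper and lower bounds. The upper bound uses the test function $\tilde u(y)=z\arg y$: in polar coordinates $y=se_\theta$ the radial slice of $\mathcal E_{r,R}$ is $s\in(r/\sqrt{q(\theta)},R/\sqrt{q(\theta)})$ with $q(\theta):=\langle\ho e_\theta,e_\theta\rangle$, so $\int_{\mathcal E_{r,R}} z^2/|y|^2\,\ud y = 2\pi z^2\log(R/r)$. For the matching lower bound I would invoke the classical identity that on any doubly connected domain $\mathcal A$ the minimum Dirichlet energy of a multivalued map with circulation $2\pi z$ equals $2\pi z^2\,M(\mathcal A)$, where $M(\mathcal A)$ denotes the conformal modulus. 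Since $\mathcal E_{r,R}$ is the $(R/r)$-scaling of a fixed-shape elliptical annulus, classical potential theory (via the Green function at infinity of the complement of the inner ellipse, or via Gr\"otzsch-type subadditivity) gives $M(\mathcal E_{r,R})=\log(R/r)+O(1)$ as $R/r\to+\infty$; dividing by $\log(R/r)$ and passing to the limit produces the claimed value of $\psi$.

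Plugging this into \eqref{defPsi},
\begin{equation*}
\Psi(z;T\!f_{\hom}) = 2\pi\sqrt{\det\ho}\,\inf\Bigl\{\sum_{j=1}^J z_j^2:J\in\N,\,z_j\in\Z,\,\sum_j z_j = z\Bigr\}.
\end{equation*}
Since $z_j^2\ge|z_j|$ for every $z_j\in\Z$, we have $\sum z_j^2\ge\sum|z_j|\ge|z|$, with equality attained by taking $J=|z|$ and each $z_j=\mathrm{sgn}(z)$; this yields $\Psi(z;T\!f_{\hom})=2\pi\sqrt{\det\ho}\,|z|$. The main obstacle is the sharp lower bound on $\psi$: a direct Cauchy--Schwarz slicing along concentric ellipses produces only $\psi(z;T\!f_{\hom})\ge 4\pi z^2\sqrt{\det\ho}/K'$ with $K':=\int_0^{2\pi}|Be_\phi||B^{-1}e_\phi^\perp|\,\ud\phi > 2\pi$ in the anisotropic case, so the conformal-modulus input is indispensable.
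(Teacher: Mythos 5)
Your reduction is the same as the paper's: the linear change of variables $y=(\sqrt{\ho})^{-1}x$ converts the anisotropic energy $\langle\ho\nabla u,\nabla u\rangle$ on the circular annulus $A_{r,R}$ into $\sqrt{\det\ho}$ times the isotropic Dirichlet energy on the elliptical annulus $\mathcal E_{r,R}:=(\sqrt{\ho})^{-1}(A_{r,R})$, preserving the circulation constraint, and the final optimization $\inf\{\sum_j z_j^2:\sum_j z_j=z\}=|z|$ via \eqref{defPsi} is also identical. The two arguments differ only in how they dispose of the ellipse. Your upper bound is in fact tidier than the paper's: the identity $\int_{\mathcal E_{r,R}}z^2|y|^{-2}\,\ud y = 2\pi z^2\log(R/r)$ is exact, with no $O(1)$ error coming from the shape. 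For the lower bound, the paper simply inscribes a circular annulus $A_{\lambda r,\Lambda R}\subset\mathcal E_{r,R}$, with $\lambda,\Lambda$ depending only on the extreme eigenvalues of $\sqrt{\ho}$, and invokes the elementary Dirichlet minimum $2\pi z^2\log(\Lambda R/\lambda r)=2\pi z^2\log(R/r)+O(1)$ on circular annuli. Your route through the conformal modulus of $\mathcal E_{r,R}$ is valid, and your warning that Cauchy--Schwarz on elliptical slices is lossy is correct; but the estimate $M(\mathcal E_{r,R})=\log(R/r)+O(1)$ you invoke is itself most directly obtained from the very same inscribed/circumscribed circular-annulus monotonicity (Gr\"otzsch is exactly that comparison), so the extremal-length language adds machinery without really changing the argument. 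In short: correct, and morally the paper's proof, with a nicer upper bound and a more elaborate packaging of the lower bound.
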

\begin{proof}
For $r,\,R\in \mathbb R$,  $0<r<R$,  let $L:=\{(0,x_2)\,:\,-R\le x_2\le -r\}$ be a cut of the annulus $A_{r,R}$. Then the domain $A_{r,R}\setminus L$ is simply connected. We set
$$
\Anew^{L}_{r,R}(z):=\{u\in SBV^2(A_{r,R})\cap H^1(A_{r,R}\setminus L)\,:\,e^{\iota u}\in \Anew_{r,R}(z)\}.
$$
By Remark \ref{cutannforpsi} and by \eqref{glim0} we have
\begin{equation}\label{newp}
\begin{aligned}
\psi_{r,R}(z;T\!f_{\hom})&=\frac{1}{\log\frac R r}\min_{u\in \Anew_{r,R}^L(z)}\int_{A_{r,R}}\langle \ho \nabla u(x),\nabla u(x)\rangle\ud x\\
&=\frac{1}{\log\frac R r}\min_{u\in \Anew_{r,R}^L(z)}\int_{A_{r,R}}|\sqrt{\ho}\nabla u(x)|^2\ud x\,,
\end{aligned}
\end{equation}
where the last equality follows from the fact that $\ho$ is symmetric and hence $\sqrt{\ho}$ is.
Setting $\hat u(y):=u(\sqrt{\ho}y)$, we have that $\nabla \hat u(y):=\sqrt{\ho}\nabla u(\sqrt{\ho}y)$. Thus the change of variables $x=\sqrt{\ho}y$ in \eqref{newp} yields
\begin{equation}\label{newp2}
\psi_{r,R}(z;T\!f_{\hom})=\frac{\sqrt{\det\ho}}{\log\frac R r}\min_{u\in \widehat{\Anew}_{r,R}^L(z)}\int_{(\sqrt{\ho})^{-1} ({A_{r,R}})}|\nabla \hat u(y)|^2\ud y\,,
\end{equation}
where we have set
\begin{equation*}
\begin{aligned}
\widehat{\Anew}_{r,R}^L(z)&:=\{\hat u\in SBV^2((\sqrt{\ho})^{-1}(A_{r,R}))\cap H^1((\sqrt{\ho})^{-1}(A_{r,R}\setminus L))\,:\,\\
&\qquad \hat u=u\circ \sqrt{\ho}\,,\quad u\in \Anew^L_{r,R}(z)\}.
\end{aligned}
\end{equation*}
For sufficiently large $R/r$  there exist $0<\lambda<\Lambda$ that depend only on $\ho$ and do not depend on $r$ and $R$ such that $A_{\lambda r,\Lambda R}\subset (\sqrt{\ho})^{-1}(A_{r,R})$ so that, by \eqref{newp2},
\begin{equation*}
\begin{aligned}
\psi_{r,R}(z;T\!f_{\hom})&= \frac{\sqrt{\det\ho}}{\log\frac R r}\min_{u\in {\Anew}_{\lambda r,\Lambda R}^L(z)} \int_{A_{\lambda r,\Lambda R}}|\nabla \hat u(y)|^2\ud y+O\Big(\frac1{\log\frac Rr}\Big)\\
&=2\pi\sqrt{\det\ho}|z|^2+O\Big(\frac1{\log\frac Rr}\Big).
\end{aligned}
\end{equation*}
It follows that $\psi(z)=2\pi\sqrt{\det\ho}|z|^2$, whence \eqref{psisimplf} follows using the very definition of $\Psi$ in \eqref{defPsi}.
\end{proof}
\section{Asymptotic analysis on annuli}
In this section we prove some auxiliary results on the asymptotic behavior of the minimal energy on an annulus when its inner and outer radii are powers of $\ep$. Such results will be crucial in the proofs of the $\Gamma$-convergence theorems in Section \ref{sec:delta<ep} and in Section \ref{sec:delta>ep}.
The next lemma states that the minimum in \eqref{glim0} for $H=F_\delta$ changes by at most a multiple of $z^{2}$ if the competitors are chosen with fixed trace $(x/|x|)^{z}$ instead of fixed degree $z$, thus belonging to a new appropriate set of admissible functions defined as
\begin{equation}\label{anew}
\widetilde\Anew_{r,R}(z):=\left\{w\in\Anew_{r,R}(z)\,:\, w(x)=\Big(\frac{x}{|x|}\Big)^z\textrm{ on }\partial B_r\cup\partial B_R\right\}\,.
\end{equation}
\begin{lemma}\label{lm:glim0}
Let $0<2r\leq R$ and let $F_{\delta}(\cdot,A_{r,R})$ be the functional defined in \eqref{manhom0} for $E=A_{r,R}$ and with $f$ satisfying condition \eqref{Gprop}. Then, there exists a constant $\bar C=\bar C(\alpha,\beta)>0$ such that, for every  $z\in\Z\setminus\{0\}$,
\begin{equation}\label{lateuse0}
 \inf_{w\in\Anew_{r,R}(z)} F_{\delta}(w;A_{r,R})\le \inf_{w\in\widetilde\Anew_{r,R}(z)} F_{\delta}(w;A_{r,R})\le \inf_{w\in\Anew_{r,R}(z)} F_\delta(w;A_{r,R})+\bar Cz^2\,.
\end{equation}
\end{lemma}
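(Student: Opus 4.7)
The left inequality in \eqref{lateuse0} is immediate from the inclusion $\widetilde\Anew_{r,R}(z)\subset\Anew_{r,R}(z)$. For the right inequality, fix $\eta>0$, pick a near-minimizer $w\in\Anew_{r,R}(z)$ with $F_\delta(w;A_{r,R})\le\inf_{\Anew_{r,R}(z)}F_\delta+\eta$, and construct $\tilde w\in\widetilde\Anew_{r,R}(z)$ that agrees with $w$ on the bulk $A_{2r,R/2}$ and is modified in the two dyadic collars $A_{r,2r}$ and $A_{R/2,R}$ to match the prescribed trace $(x/|x|)^z$ on $\partial B_r\cup\partial B_R$. The thin case $R<4r$ can be disposed of by the global competitor $(x/|x|)^z$, whose energy is at most $2\pi\beta z^2\log 4$, already within the claimed bound. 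The two collars are treated symmetrically, so I focus on the inner one.

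Since $w$ and $(x/|x|)^z$ share the same degree on $\partial B_r$, the quotient $v:=w\,(x/|x|)^{-z}$ has zero degree on $A_{r,2r}$ and admits a single-valued $H^1$-lifting $v=e^{\iota g}$ with $g\in H^1(A_{r,2r};\R)$, unique up to a $2\pi$-shift. Pick $\rho_1\in(3r/2,2r)$ via Fubini, take the linear cutoff $\chi(\rho)=(\rho-r)/(\rho_1-r)$ with $\chi(r)=0$, $\chi(\rho_1)=1$, and set
\[
\tilde w(\rho,\theta):=\left(\frac{x}{|x|}\right)^{z}e^{\iota\chi(\rho)\,g(\rho,\theta)}\quad\text{on }A_{r,\rho_1}.
\]
This forces $\tilde w=(x/|x|)^z$ on $\partial B_r$ and $\tilde w=w$ on $\partial B_{\rho_1}$, so pasting $\tilde w=w$ on $A_{\rho_1,\rho_2}$ together with the symmetric outer modification gives a well-defined $\tilde w\in\widetilde\Anew_{r,R}(z)$.

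The core identity follows from the orthogonality $\int_{A_{r,\rho_1}}\nabla\theta\cdot\nabla h\,dx=0$ valid for any $h\in H^1$: the field $\nabla\theta$ is divergence-free in $\R^2\setminus\{0\}$ and has vanishing normal component on every concentric circle, so the Dirichlet energies of the liftings $u=z\theta+g$ and $\tilde u=z\theta+\chi g$ split additively, giving
\[
\int_{A_{r,\rho_1}}|\nabla\tilde w|^2\,dx-\int_{A_{r,\rho_1}}|\nabla w|^2\,dx = \int_{A_{r,\rho_1}}\bigl(|\nabla(\chi g)|^2-|\nabla g|^2\bigr)dx.
\]
A radial integration by parts on the right-hand side kills its non-positive interior contribution and leaves a boundary term bounded by $\frac{\rho_1}{\rho_1-r}\|g(\rho_1,\cdot)\|^2_{L^2(\partial B_{\rho_1})}\le 3\|g(\rho_1,\cdot)\|^2_{L^2}$. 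Combining with the growth $\alpha|M|^2\le f(y,M)\le\beta|M|^2$ and the fact that $\tilde w=w$ outside the collars, the total modification cost on $A_{r,R}$ is bounded by $C_1 z^2+C_2(\beta-\alpha)D+C_3\beta\|g(\rho_1,\cdot)\|^2_{L^2}$, where $D:=\int_{A_{r,2r}}|\nabla g|^2\,dx$ is the collar deficit.

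The main obstacle is making $\|g(\rho_1,\cdot)\|^2_{L^2}$ and $D$ both of order $z^2$ uniformly in $r,R,\delta$. I would (i) optimize the $2\pi$-branch of $g$ so that $\overline{g(\rho_1,\cdot)}\in[-\pi,\pi]$ and apply the Poincaré inequality on $S^1$, reducing $\|g(\rho_1,\cdot)\|^2_{L^2}$ to $\|g_\theta(\rho_1,\cdot)\|^2_{L^2}+O(1)$; (ii) refine the Fubini selection of $\rho_1$ to ensure $\|g_\theta(\rho_1,\cdot)\|^2_{L^2}\le CD$; and (iii) invoke the variational character of the near-minimizer $w$ together with a dyadic-decomposition argument, exploiting the splitting of the total deficit of a near-minimizer of $F_\delta$ on $A_{r,R}$ across dyadic scales in order to replace $w$, if necessary, by another near-minimizer whose contribution on each boundary-adjacent collar is of size at most $C(\alpha,\beta)z^2$. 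Combining (i)–(iii) yields the required constant $\bar C=\bar C(\alpha,\beta)$.
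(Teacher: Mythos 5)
The overall strategy (keep the left inequality by inclusion, build a competitor from $w$ by interpolating in a collar to match the prescribed boundary trace, pass through the Dirichlet energy via \eqref{Gprop}) is sound and matches the spirit of the paper's argument. The thin-annulus reduction and the orthogonality $\int_A\nabla\theta\cdot\nabla h=0$ for $h\in H^1(A)$ on a centered annulus are also correct.

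However, there is a genuine gap at exactly the point you flag as ``the main obstacle.'' Your construction always interpolates in the \emph{fixed} collars $A_{r,2r}$ and $A_{R/2,R}$, and then needs the collar deficit $D=\int_{A_{r,2r}}|\nabla g|^2$ and the trace term $\|g(\rho_1,\cdot)\|_{L^2}^2$ to be $O(z^2)$. This is simply false in general: a near-minimizer of $F_\delta$ on $A_{r,R}$ can concentrate an $O(\log(R/r))$-sized share of its energy in the innermost dyadic collar (for instance if the microstructure is very favorable near $|x|\approx 2r$ and unfavorable near $\partial B_r$), so no uniform bound $D\le Cz^2$ holds. Your step (iii) --- ``replace $w$ by another near-minimizer whose contribution on each boundary-adjacent collar is at most $C(\alpha,\beta)z^2$'' --- is not a construction but a restatement of what must be proved; as written it is circular.

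The missing idea, which the paper uses, is to \emph{choose} the interpolation collar by a pigeonhole comparison with the reference map $(x/|x|)^z$. Decompose $A_{r,R}$ into dyadic annuli $A_k=A_{2^{k-1}r,2^kr}$. If at most one $A_k$ satisfies $F_\delta((x/|x|)^z;A_k)\ge F_\delta(w;A_k)$, the global competitor $(x/|x|)^z$ already does the job. Otherwise, let $A_{k_1}$ and $A_{k_2}$ be the innermost and outermost such annuli: on each of these the energy of $w$ is dominated by that of $(x/|x|)^z$, hence is at most $2\pi\beta z^2\log 2$, which gives you $D\le Cz^2$ for free on the chosen collar. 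Then set $\widehat w=(x/|x|)^z$ on the annuli before $A_{k_1}$ and after $A_{k_2}$ (this \emph{decreases} the energy there, by the definition of $k_1,k_2$), keep $\widehat w=w$ in between, and interpolate on $A_{k_1},A_{k_2}$ exactly as you propose, now with a deficit already of size $O(z^2)$; the remaining Poincar\'e--Wirtinger estimate on the cut annulus closes the argument. Without this adaptive choice of collar, the construction you describe does not yield a constant $\bar C$ independent of $r,R,\delta$, and the proof does not go through.
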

\begin{proof}
The first inequality in \eqref{lateuse0} follows from the inclusion $\widetilde\Anew_{r,R}(z)\subset \Anew_{r,R}(z)$.  Hence, it is enough to show that
for every $w\in\Anew_{r,R}(z)$ there exists $\widehat w\in \widetilde\Anew_{r,R}(z)$ such that
\begin{equation}\label{w<hat}
F_{\delta}(\widehat w;A_{r,R})\le  F_\delta(w;A_{r,R})+\bar Cz^2,
\end{equation}
for some constant $\bar C$ depending only on the constants $\alpha$ and $\beta$ in \eqref{Gprop}.
Set $K:=\lfloor\frac{\log R-\log r}{\log 2}\rfloor$ and  $A_k:=A_{2^{k-1}r,2^kr}$\, for $k=1,2,\dots,K$. We have that
$$
A_{r,R}=\bigcup_{k=1}^K A_k\cup A_{2^K r,R}\,.
$$
Since $K\ge \frac{\log R-\log r}{\log 2}-1$\,, in view of \eqref{Gprop}, we notice that
\begin{equation}\label{bdryerr}
F_{\delta}\Big(\Big(\frac{x}{|x|}\Big)^z;A_{2^K r,R}\Big)\le \beta z^2\int_{A_{2^Kr,R}}\frac{1}{|x|^2}\ud x =2\pi\beta z^2\log\frac{R}{2^K r}
\le  2\pi\beta z^2\log 2\,.
\end{equation}
We first consider the case that there is at most one annulus $A_k$ such that
\begin{equation}\label{giuin}
F_{\delta}\Big(\Big(\frac{x}{|x|}\Big)^z;A_k\Big)\ge F_{\delta}(w;A_k)\,.
\end{equation}
Then, in view of \eqref{Gprop}, we have
\begin{equation*}
F_{\delta}\Big(\Big(\frac{x}{|x|}\Big)^z;A_k\Big)\le \beta z^2\int_{A_k}\frac{1}{|x|^2}\ud x= 2\pi\beta z^2\log 2,
\end{equation*}
whence, using also \eqref{bdryerr}, we deduce $F_{\delta}\big(\big(\frac{x}{|x|}\big)^z;A_{r,R}\big)\le F_{\delta}(w;A_{r,R})+\bar Cz^2$\,,
which proves \eqref{w<hat} for $\widehat w(x)=(x/|x|)^{z}$.

From now on, we can assume that \eqref{giuin} is satisfied by at least two of the annuli $A_k$. We let $k_1$ and $k_2$ denote the smallest and the largest $k\in\{1,\ldots,K\}$ satisfying \eqref{giuin}. Let moreover $L:=\{(0,x_2)\,:\,-R\le x_2\le -r\}$ be a cut of the annulus $A_{r,R}$ such that the domain $A_{r,R}\setminus L$ is simply connected. By \cite{BZ}, there exists a lifting $u\in H^{1}(A_{r,R}\setminus L; \R)$ of $w$ in $A_{r,R}\setminus L$. Moreover, since $\deg(w, \partial B_\rho)=z$ for every $\rho\in[r,R]$, we have that the function $u$ jumps by $2\pi z$ across $L$.
By the properties of the lifting,
\begin{equation}\label{used}
\|\nabla u\|_{L^2(E;\R^2)}=\|\nabla w\|_{L^2(E,;\R^{2\times 2})}\qquad\textrm{for every open set }E\subset A_{r,R}.
\end{equation}
Furthermore,
setting
\begin{equation}\label{deftheta}
\theta(x):=\begin{cases}
\arctan\frac{x_2}{x_1}&\textrm{ if }x_1>0\,,\\
\frac{\pi} 2&\textrm{ if }x_1=0, x_2>0\,,\\
\pi+\arctan\frac{x_2}{x_1}&\textrm{ if }x_1<0\,,\\
\frac{3} 2\pi&\textrm{ if }x_1=0, x_2<0\,,
\end{cases}
\end{equation}
for every $x\in\R^2\setminus \{0\}$\,, the function $z\theta\in SBV^2(A_{r,R})$ is a lifting of $(\frac{x}{|x|})^z$.
Using the complex notation we set $\widehat w:=e^{\iota\widehat u}$, where the lifting $\widehat u$ is defined as
\begin{equation}\label{constrw}
\widehat u(x):=\left\{\begin{array}{ll}
 z\theta(x)&\textrm{ if } r\le|x|\le 2^{k_1-1}r\,,\\           
(1-\sigma_1(|x|))z\theta(x)+\sigma_1(|x|)u(x)&\textrm{ if }2^{k_1-1}r \le|x|\le 2^{k_1}r\,,\\
u(x)&\textrm{ if }2^{k_1}r \le|x|\le 2^{k_2-1}r\,,\\   
\sigma_2(|x|)z\theta(x)+(1-\sigma_2(|x|))u(x)&\textrm{ if }2^{k_2-1}r\le |x|\le 2^{k_2}r\,,\\ 
z\theta(x)&\textrm{ if }2^{k_2}r\le |x|\le R\,. 
\end{array}\right.
\end{equation}
In the formula above, for $i=1,2$ the function  $\sigma_i:[2^{k_i-1}r,2^{k_i}r]\to [0,1]$ is defined by
\begin{equation*}
\sigma_i(\rho):=\frac{1}{2^{k_i-1}r}(\rho-2^{k_i-1}r)
\end{equation*}
and satisfies
$\sigma_i'(\rho)=\frac{1}{2^{k_i-1}r}\textrm{ and }\|\sigma_i\|_{L^\infty}\le 1$.

Note that $\widehat w\in \widetilde\Anew_{r,R}(z)$.
By construction and by \eqref{bdryerr}, we have that
\begin{equation*}
\begin{aligned}
F_{\delta}(\widehat w; A_{r,2^{k_1-1}r}\cup A_{2^{k_2}r,R})&=F_{\delta}\Big(\Big(\frac{x}{|x|}\Big)^z; A_{r,2^{k_1-1}r}\cup A_{2^{k_2}r,R}\Big)\\
&\le F_{\delta}(w; A_{r,2^{k_1-1}r}\cup A_{2^{k_2}r,R})+Cz^2\,.
\end{aligned}
\end{equation*}
Therefore, in view of \eqref{constrw} it is enough to prove that the energy of $\widehat w$ on $A_{k_1}$ and $A_{k_2}$ is bounded from above by $\bar{C}|z|^2$ for some constant $\bar C>0$ depending only on $\alpha$ and $\beta$. We prove this fact only for the annulus $A_{k_1}$, being the proof for $A_{k_2}$ similar.
To this end, we notice that in $A_{k_1}$ one has that
\begin{equation}\label{gradhatw}
\begin{aligned}
|\nabla \widehat w|^2&= |\nabla \widehat u|^2\le 3|\sigma_1'|^2|u-z\theta|^2+3|\sigma_1|^2|\nabla u-z\nabla\theta|^2+3z^{2}|\nabla\theta|^{2}\\
&\le  \frac{C}{2^{2(k_1-1)}r^2}(|u|^2+z^2|\theta|^2)+C(|\nabla u|^2+z^2|\nabla \theta|^2)\,.
\end{aligned}
\end{equation}
Set $l_{k_1}:=\dashint_{A_{k_1}} u\ud x$\,. Up to adding an integer multiple of $2\pi$, we can always assume that $|l^{k_1}|\le 2\pi$ and estimate
$\|l_{k_{1}}\|_{L^2(A_{k_1})}^{2}\leq(2\pi(|z|+1)^{2})2^{2k_{1}}r^{2}$. Hence
\begin{equation}\label{meanu}
\begin{aligned}
\|u\|_{L^2(A_{k_1})}^2&\le 2\|u-l_{k_1}\|_{L^2(A_{k_1})}^2+2\|l_{k_1}\|_{L^2(A_{k_1})}^2\\
&\le C2^{2k_1}r^2\|\nabla u\|_{L^2(A_{k_1};\R^2)}^2+ Cz^2 2^{2k_1}r^2\\
&\le C 2^{2k_1}r^2z^2 \|\nabla \theta\|_{L^2(A_{k_1};\R^2)}^2+Cz^2 2^{2k_1}r^2\le Cz^2 2^{2k_1}r^2,
\end{aligned}
\end{equation}
where the second inequality is a consequence of the Poincar\'e-Wirtinger inequality applied to the domain $A_{k_1}\setminus L$, and the third inequality follows on gathering together \eqref{giuin}, \eqref{used}, and \eqref{Gprop}. Note that all the constants appearing in \eqref{meanu} depend only on $\alpha$ and $\beta$.
By integrating \eqref{gradhatw} and using \eqref{meanu}, \eqref{giuin}, \eqref{used} and \eqref{Gprop},
 we deduce that
\begin{equation*}
\begin{aligned}
F_{\delta}(\widehat w; A_{k_1})&\le C\|\nabla \widehat w\|_{L^2(A_{k_1;\R^{2\times 2}})}^2\\
&\le\frac{C}{2^{2(k_1-1)}r^2}(\|u\|_{L^2(A_{k_1})}^2+z^2\|\theta\|_{L^2(A_{k_1})}^2)+
C (\|\nabla u\|_{L^2(A_{k_1};\R^2)}^2+z^2\|\nabla \theta\|_{L^2(A_{k_1};\R^2)}^2)
\\
&\le \frac{C\, 2^{2k_1}z^2}{2^{2(k_1-1)}}+Cz^2\|\nabla \theta\|_{L^2(A_{k_1};\R^2)}^2=:\bar{C}z^2\,,
\end{aligned}
\end{equation*}
thus concluding the proof of \eqref{w<hat}.
\end{proof}

In the next proposition we show that in the $|\log\ep|$ regime, to some extent, the homogenization process commutes  with the minimization process defining $\psi(d;T\!f_{\hom})$.
\begin{proposition}\label{homdeg}
Let $\precrw$ be defined in \eqref{intro:cra_en_0} with $f$ satisfying assumptions \eqref{Pprop}, \eqref{Gprop}, \eqref{Hprop}, and let $T\!f_{\hom}$ be defined in \eqref{genhom1}.
Then for any $s_1$ and $s_2$ such that $0\le s_1<s_2<1$ and $\lim_{\ep\to 0}\frac{\delta_\ep}{\ep^{s_2}}=0$ we have
\begin{equation}\label{limhom}
\lim_{\ep\to 0}\frac{1}{|\log\ep|}\inf_{w\in \Anew_{\ep^{s_2},\ep^{s_1}}(z)}
\precrw(w;A_{\ep^{s_2},\ep^{s_1}})
=(s_2-s_1)\psi(z;T\!f_{\hom})\,,
\end{equation}
where $\psi(z;T\!f_{\hom})$ is the function defined in \eqref{form:glim2} with $h=T\!f_{\hom}$\,.
\end{proposition}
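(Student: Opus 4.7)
The plan is to reduce, via a rescaling that exploits the $2$-homogeneity \eqref{Hprop}, to a problem on a growing annulus with vanishing homogenization scale, decompose that annulus into a chain of geometrically scaled subannuli, apply Corollary \ref{homdeg0} on each piece, and conclude using the asymptotic formula defining $\psi$ in Proposition \ref{prop:glim2}.

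First I set $\eta_\ep:=\delta_\ep/\ep^{s_2}$ and $R_\ep:=\ep^{s_1-s_2}$; by hypothesis $\eta_\ep\to 0$ and $R_\ep\to+\infty$, and $\log R_\ep=(s_2-s_1)|\log\ep|$. The change of variables $x=\ep^{s_2}y$ combined with \eqref{Hprop} shows that for $w\in\Anew_{\ep^{s_2},\ep^{s_1}}(z)$ and $\tilde w(y):=w(\ep^{s_2}y)\in\Anew_{1,R_\ep}(z)$ one has $\precrw(w;A_{\ep^{s_2},\ep^{s_1}})=\FF_{\eta_\ep}(\tilde w;A_{1,R_\ep})$. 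Consequently \eqref{limhom} is equivalent to
\[
\lim_{\ep\to 0}\frac{1}{\log R_\ep}\inf_{v\in\Anew_{1,R_\ep}(z)}\FF_{\eta_\ep}(v;A_{1,R_\ep})=\psi(z;T\!f_{\hom}).
\]
Fix $\rho>1$ and set $K_\ep:=\lfloor\log R_\ep/\log\rho\rfloor$. The annuli $A_{\rho^{k-1},\rho^k}$, $k=1,\dots,K_\ep$, tile $A_{1,R_\ep}$ up to a residual annulus of log-aspect ratio at most $\log\rho$. The dilation $y\mapsto\rho^{k-1}y$ together with \eqref{Hprop} gives $\FF_{\eta_\ep}(v;A_{\rho^{k-1},\rho^k})=\FF_{\eta_\ep/\rho^{k-1}}(v(\rho^{k-1}\cdot);A_{1,\rho})$ and preserves degrees. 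Setting $m_\rho:=\min_{\Anew_{1,\rho}(z)}F_{\hom}(\cdot;A_{1,\rho})$, Corollary \ref{homdeg0} furnishes, for every $\eps>0$, some $\eta_0>0$ with $|\inf_{\Anew_{1,\rho}(z)}\FF_\eta(\cdot;A_{1,\rho})-m_\rho|<\eps$ whenever $\eta<\eta_0$; since $\eta_\ep/\rho^{k-1}\le\eta_\ep$ for all $k\ge 1$, this bound holds uniformly in $k$ once $\eta_\ep<\eta_0$.

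For the lower bound, restricting any $v\in\Anew_{1,R_\ep}(z)$ to each subannulus gives functions still of degree $z$ (by the $r$-invariance of the degree on admissible traces), so $\FF_{\eta_\ep}(v;A_{1,R_\ep})\ge K_\ep(m_\rho-\eps)$ for $\ep$ small. Since $K_\ep/\log R_\ep\to 1/\log\rho$, dividing by $\log R_\ep$ and letting $\ep\to 0$, $\eps\to 0$, $\rho\to+\infty$ in turn, and using $m_\rho/\log\rho=\psi_{1,\rho}(z;T\!f_{\hom})\to\psi(z;T\!f_{\hom})$ from Proposition \ref{prop:glim2}, yields $\liminf\ge\psi(z;T\!f_{\hom})$. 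For the upper bound, I would pick, on each $A_{\rho^{k-1},\rho^k}$, a competitor obtained by rescaling a near-optimizer in $\widetilde\Anew_{1,\rho}(z)$; by Lemma \ref{lm:glim0} combined with Corollary \ref{homdeg0}, its $\FF_{\eta_\ep/\rho^{k-1}}$-energy is $\le m_\rho+\bar Cz^2+\eps$, and the common trace $(y/|y|)^z$ on $\partial B_1\cup\partial B_\rho$ ensures that the rescaled pieces glue across all interfaces into a function of $\Anew_{1,R_\ep}(z)$. Extending by $(y/|y|)^z$ on the residual annulus adds only $O(z^2)$. Summing, dividing by $\log R_\ep$, and letting $\ep\to 0$, $\eps\to 0$, $\rho\to+\infty$ delivers $\limsup\le\psi(z;T\!f_{\hom})$.

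The main obstacle is the interplay of the two limits $\ep\to 0$ and $\rho\to+\infty$: the homogenization scales $\eta_\ep/\rho^{k-1}$ on the subannuli depend on $k$, but since they are all bounded above by $\eta_\ep\to 0$, the convergence of infima from Corollary \ref{homdeg0} kicks in simultaneously for every $k\ge 1$. The secondary technical point is gluing competitors across the geometrically many interfaces; this is handled by working in the fixed-trace class $\widetilde\Anew_{1,\rho}(z)$ from \eqref{anew} and paying the $O(z^2)$ overhead per interface guaranteed by Lemma \ref{lm:glim0}, which is negligible after dividing by $\log R_\ep$ and sending $\rho\to+\infty$.
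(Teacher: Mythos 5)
Your proof is correct and follows essentially the same route as the paper: an annular dyadic-in-$\log$ decomposition, rescaling to a fixed annulus so that Corollary~\ref{homdeg0} controls the infima, Lemma~\ref{lm:glim0} to pay an $O(z^2)$ cost for the fixed-trace gluing in the upper bound, and finally Proposition~\ref{prop:glim2} to pass $\rho\to+\infty$. The only cosmetic difference is in the lower bound, where you invoke the $\eps$-$\eta_0$ form of the limit in Corollary~\ref{homdeg0} uniformly over all subannuli, whereas the paper instead selects the subannulus of minimal energy and applies the corollary to that single rescaled scale; both variants are valid and of comparable effort.
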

\begin{proof}
We first show that
\begin{equation}\label{lbhom}
\liminf_{\ep\to 0}\frac{1}{|\log\ep|}\inf_{w\in \Anew_{\ep^{s_2},\ep^{s_1}}(z)}
\precrw(w;A_{\ep^{s_2},\ep^{s_1}})
\ge (s_2-s_1)\psi(z;T\!f_{\hom})\,.
\end{equation}
To this purpose, we fix $R>1$, set $K_{\ep,R}=\lfloor (s_2-s_1)\frac{|\log\ep|}{\log R}\rfloor$, and note that
$\displaystyle
A_{\ep^{s_2},\ep^{s_1}}\supset \bigcup_{k=1}^{K_{\ep,R}}A_{R^{k-1}\ep^{s_2},R^{k}\ep^{s_2}}$.
Let moreover $w_\ep\in \Anew_{\ep^{s_2},\ep^{s_1}}(z)$  be such that
\begin{equation}\label{nuo}
\precrw(w_\ep ;A_{\ep^{s_2},\ep^{s_1}})\le \inf_{w\in \Anew_{\ep^{s_2},\ep^{s_1}}(z)}
\precrw(w;A_{\ep^{s_2},\ep^{s_1}})+C,
\end{equation}
for some constant $C$ (independent of $\ep$)
and let $\bar k=\bar k_{\ep,R}\in\{1,\ldots,K_{\ep,R}\}$ be such that
$$
\precrw(w_\ep;A_{R^{\bar k-1}\ep^{s_2},R^{\bar k}\ep^{s_2}})\le \precrw(w_\ep;A_{R^{k-1}\ep^{s_2},R^{k}\ep^{s_2}})\,,\qquad \textrm{for all }k=1,\ldots,K_{\ep,R}\,.
$$
Therefore
\begin{equation}\label{four0}
\precrw(w_\ep;A_{\ep^{s_2},\ep^{s_1}})\ge \sum_{k=1}^{K_{\ep,R}} \precrw(w_\ep;A_{R^{k-1}\ep^{s_2},R^{k}\ep^{s_2}})\ge K_{\ep,R}\,  \precrw(w_\ep;A_{R^{\bar k-1}\ep^{s_2},R^{\bar k}\ep^{s_2}})\,.
\end{equation}
By the change of variable $y=\frac{x}{R^{\bar k-1}\ep^{s_2}}$, $w'_{\ep,\bar k}(y):=w_\ep(R^{\bar k-1}\ep^{s_2}y)$ and by property \eqref{Hprop}, we have
\begin{equation}\label{five0}
\precrw(w_\ep;A_{R^{\bar k-1}\ep^{s_2},R^{k}\ep^{s_2}})=F_{\frac{\delta_\ep}{R^{\bar k-1}\ep^{s_2}}}(w'_{\ep,\bar k};A_{1,R})\,.
\end{equation}
Therefore, since by assumption $\limsup_{\ep\to 0}\frac{\delta_{\ep}}{R^k\ep^{s_2}}=0$ for every $k=1,\ldots,K_{\ep,R}$,  by using \eqref{nuo}, \eqref{four0}, \eqref{five0}, and Corollary \ref{homdeg0}, we deduce that
\begin{equation*}
\begin{aligned}
\liminf_{\ep\to 0}\frac{1}{|\log\ep|}\inf_{w\in \Anew_{\ep^{s_2},\ep^{s_1}}(z)}&
\precrw(w;A_{\ep^{s_2},\ep^{s_1}})\\
&\ge\liminf_{\ep\to 0} \frac{1}{|\log\ep|}\precrw(w_\ep;A_{\ep^{s_2},\ep^{s_1}})\\
&\ge \liminf_{\ep\to 0}\frac{K_{\ep,R}}{|\log\ep|}\inf_{w\in \Anew_{1,R}(z)} F_{\frac{\delta_\ep}{R^{\bar k-1}\ep^{s_2}}}(w; A_{1,R})\\
&\ge  \lim_{\ep\to 0}\Big(\frac{s_2-s_1}{\log R}-\frac{1}{|\log\ep|} \Big)\liminf_{\ep\to 0} \inf_{w\in \Anew_{1,R}(z)} F_{\frac{\delta_\ep}{R^{\bar k-1}\ep^{s_2}}}(w; A_{1,R})\\
&= \frac{s_2-s_1}{\log R}\min_{w\in \Anew_{1,R}(z)}F_{\hom}(w;A_{1,R})=(s_2-s_1)\psi_{1,R}(z;T\!f_{\hom})\,.
\end{aligned}
\end{equation*}
Formula \eqref{lbhom} follows from the estimate above as $R\to +\infty$ thanks to Proposition \ref{prop:glim2} applied to $h=T\!f_{\hom}$.
To conclude the proof of \eqref{limhom} we are left to show that
\begin{equation}\label{ubhom}
\limsup_{\ep\to 0}\frac{1}{|\log\ep|}\inf_{w\in \Anew_{\ep^{s_2},\ep^{s_1}}(z)}
\precrw(w; A_{\ep^{s_2},\ep^{s_1}})
\le (s_2-s_1)\psi(z;T\!f_{\hom})\,.
\end{equation}
To this purpose, we take $R>1$ and set $J_{\ep,R}:=\lceil (s_2-s_1)\frac{|\log\ep|}{\log R}\rceil$\,. We observe that
\begin{equation}\label{n1}
\inf_{w\in \Anew_{\ep^{s_2},\ep^{s_1}}(z)}
\precrw(w; A_{\ep^{s_2},\ep^{s_1}})
\le\sum_{j=1}^{J_{\ep,R}}\inf_{w\in \widetilde\Anew_{R^{j-1}\ep^{s_2},R^j\ep^{s_2}}(z)}
\precrw(w;A_{R^{j-1}\ep^{s_2},R^j\ep^{s_2}})\,.
\end{equation}
We also note that for every $R>1$, thanks to Corollary \ref{homdeg0}, there exists a modulus of continuity $\omega$ such that
$$
\inf F_{\delta}(w; A_{1,R})\le \min F_{\hom}(w;A_{1,R}) +\omega(\delta).
$$
Letting $\delta_{\ep,j}:=\frac{\delta_\ep}{R^{j-1}\ep^{s_2}}$ for every $j=1,\ldots,J_{\ep,R}$ and taking into account the fact that $\{\delta_{\ep,j}\}_{j}$ is decreasing yields $\delta_{\ep,j}\le \delta_{\ep,1}=\frac{\delta_{\e}}{\ep^{s_2}}\to 0$ as $\e\to 0$ for all $j=1,\ldots, J_{\ep,R}$. We set  $\omega(\delta_{\ep,\bar \jmath}):=\max_{j=1,\ldots,J_{\ep,R}} \omega(\delta_{\ep,j})$. Note that $\omega(\delta_{\ep,\bar\jmath})$ depends only on $\ep$ and $R$, moreover,   since $\delta_{\ep,\bar\jmath}\to 0$ as $\e\to 0$, \
$\omega(\delta_{\ep,\bar\jmath})$  vanishes as $\e\to 0$.
For every $j=1,\ldots, J_{\ep,R}$, using the change of variable $y=\frac{x}{R^{j-1}\ep^{s_2}}$, applying Lemma \ref{lm:glim0} with $\delta=\delta_{\ep,j}$
(see formula \eqref{lateuse0}) and Corollary \ref{homdeg0}, we have that
\begin{equation}\label{annlimsup}
\begin{aligned}
\inf_{w\in \widetilde\Anew_{R^{j-1}\ep^{s_2},R^j\ep^{s_2}}(z)}
\precrw(w; A_{R^{j-1}\ep^{s_2},R^j\ep^{s_2}})
&=\inf_{w\in \widetilde\Anew_{1,R}(z)}
F_{\delta_{\ep,j}}(w; A_{1,R})\,\\
&\le  \inf_{w\in \Anew_{1,R}(z)}
F_{\delta_{\ep,j}}(w; A_{1,R})+\bar Cz^2\\
&\le  \min_{w\in \Anew_{1,R}(z)}
F_{\hom}(w; A_{1,R})+\omega(\delta_{\ep,\bar \jmath})+\bar Cz^2\,,
\end{aligned}
\end{equation}
where the constant $\bar C>0$ is given in Lemma \ref{lm:glim0}. By combining \eqref{annlimsup} with \eqref{n1} we get that
\begin{equation*}
\begin{aligned}
\limsup_{\ep\to 0}\frac{1}{|\log\ep|}\inf_{w\in \Anew_{\ep^{s_2},\ep^{s_1}}(z)}
&\precrw(w;A_{\ep^{s_2},\ep^{s_1}})\\
&\le \frac{s_2-s_1}{\log R}\min_{w\in \Anew_{1,R}(z)}
F_{\hom}(w; A_{1,R})+ \frac{s_2-s_1}{\log R}\bar Cz^2\\
&= (s_2-s_1)\psi_{1,R}(z;T\!f_{\hom})+ \frac{s_2-s_1}{\log R}\bar Cz^2\,,
\end{aligned}
\end{equation*}
whence \eqref{ubhom} follows by taking the limit as $R\to +\infty$ and using Proposition \ref{prop:glim2} with $h=T\!f_{\hom}$.
This concludes the proof of \eqref{limhom}.
\end{proof}
\begin{remark}\label{homdegrmk}
\rm{Note that \eqref{limhom} holds true also if the center of the annulus is a point $\xi_\ep$ depending on $\ep$\,, since all the estimates in the previous proof do not depend on the center of the annulus.}
\end{remark}
\section{The ball construction}\label{bc:section}
{In this section we present the so-called {\it ball construction} introduced in \cite{J,S}, which provides lower bounds of the Dirichlet energy in the presence of topological singularities. We slightly revisit it, following the approach by Sandier \cite{S} and adopting the notation in \cite{DLP} (see also \cite{AP}).}

Let $\B=\{B_{r_1}(x_1), \ldots,B_{r_n}(x_n)\}$ be a finite family of open balls in $\R^2$ with disjoint closure $\bar B_{r_i}(x_i) \cap \bar B_{r_j}(x_j) = \emptyset$ for $i\neq j$ and let  $\mu=\sum_{i=1}^n z_i\delta_{x_i}$ with $z_i\in\Z\setminus\{0\}$\,.

Let moreover $\E(\B,\mu,\cdot)$ be an increasing set-function defined on open subsets of $\R^2$ satisfying the following properties:
\begin{itemize}
\item[(i)] $\E(\B,\mu,E_{1}\cup E_{2})\ge \E(\B,\mu,E_{1})+\E(\B,\mu,E_{2})$ for all $E_{1},\, E_{2}$ open disjoint subsets of $\R^2$;
\item[(ii)] for any annulus $A_{r,R}(x)=B_{R}(x)\setminus \bar{B}_r(x)$ with $A_{r,R}(x)\cap\bigcup_{i}\bar B_{r_i}(x_i)=\emptyset$, it holds
\begin{equation}\label{lbastratto}
\E(\B,\mu,A_{r,R}(x))\ge2\pi\alpha|\mu(B_r(x))|\log\frac{R}{r}\,,
\end{equation}
for some constant $\alpha>0$\,.
\end{itemize}

\begin{remark}\label{disuene0}
{\rm
Let   $ w\in H^1_{\mathrm{loc}} (\R^2\setminus \bigcup_{B\in\B} B; \mathcal S^1)$ be such that $\mu=\sum_{B\in\B} \deg(w,\partial B) \delta_{x_B}$, where   $x_B$ is the center of $B$. Then, an explicit example of admissible functional $\E(\B,\mu, \cdot)$ is given by
$$
\E(\B, \mu, A) :=  \alpha \int_{A\setminus \bigcup_{B\in\B} \overline B} |\nabla w|^2\ud x,
$$
for every open set $A\subset \R^2$\,.
For further details see Remark \ref{disuene}.
}
\end{remark}

For every ball $B\subset\R^2$, let $r(B)$ denote the radius of the ball $B$; moreover, for every  family $\Bnew$ of balls in $\R^2$ we set
\begin{equation*}
\rad(\Bnew):=\sum_{B\in\Bnew}r(B).
\end{equation*}
\begin{proposition}\label{ballconstr}
There exists a one-parameter family of open balls $\B(t)$ with $t\ge 0$  such that, setting $U(t):=\bigcup_{B\in\B(t)} B$, the following conditions are fulfilled:
\begin{enumerate}
\item $\B(0)=\B \, $;
\item $ U(t_1)\subset U(t_2)$  for any $0\le t_1<t_2 \, $;
\item the balls in $\B(t)$ are pairwise disjoint;
\item for any $0\le t_1<t_2$ and for any open set $U \subset \R^2 \,$,
\begin{equation}\label{sti3}
\E(\B,\mu,U\cap(U(t_2)\setminus \overline U(t_1)))\ge2\pi\alpha\sum_{\newatop{B\in\B(t_2)}{B\subset U}}|\mu(B)|\log\frac{1+t_2}{1+t_1}\,;
\end{equation}
\item $\rad(\B(t))\le (1+t)\rad(\B)$.
\end{enumerate}
\end{proposition}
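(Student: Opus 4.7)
The plan is to adapt the classical Jerrard--Sandier ball construction (as revisited in \cite{S,DLP,AP}) via alternating \emph{expansion} and \emph{merging} phases. Setting $s_0:=0$ and $\B(s_0):=\B$, we build $\B(t)$ inductively along a sequence $0=s_0<s_1<s_2<\cdots$ of merge times. Given a pairwise disjoint configuration $\B(s_k)$, for $t\ge s_k$ we define
$$\B(t):=\Big\{B_{r(1+t)/(1+s_k)}(x)\colon B_r(x)\in\B(s_k)\Big\},$$
and let $s_{k+1}>s_k$ be the first time at which two or more balls become tangent. At $s_{k+1}$ we partition the balls into connected components via the tangency graph and replace each component $\{B_{r_i}(x_i)\}_{i\in I}$ by a single enclosing ball $B_R(y)\supseteq\bigcup_{i\in I}B_{r_i}(x_i)$ with $R\le\sum_{i\in I}r_i$, obtained by iteratively pairing overlapping balls (two overlapping balls of radii $r,r'$ fit in an enclosing ball of radius $\le r+r'$). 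Since the number of balls strictly decreases at each merge, $\B(t)$ is well defined for every $t\ge 0$.

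Properties (1)--(3) are built into the construction. Property (5) follows by induction on $k$: within each expansion phase $\rad(\B(t))=\frac{1+t}{1+s_k}\rad(\B(s_k))$, while the merge at $s_{k+1}$ does not increase $\rad$, so from $\rad(\B(s_k))\le(1+s_k)\rad(\B)$ we deduce $\rad(\B(t))\le(1+t)\rad(\B)$ for every $t$.

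The core of the argument is (4). Let $t_1=\sigma_0<\sigma_1<\cdots<\sigma_m=t_2$ be the merge times of $\{s_k\}$ falling in $[t_1,t_2]$. On each expansion sub-phase $[\sigma_j,\sigma_{j+1}]$, every post-merge ball $B_\rho(\xi)\in\B(\sigma_j)$ produces the annulus $A_{\rho,\rho(1+\sigma_{j+1})/(1+\sigma_j)}(\xi)\subset U(\sigma_{j+1})\setminus\overline U(\sigma_j)$, and these annuli are pairwise disjoint by construction. Applying (ii) to each annulus contained in $U$ and (i) to their union yields
$$\E(\B,\mu,U\cap(U(t_2)\setminus\overline U(t_1)))\ge 2\pi\alpha\sum_{j=0}^{m-1}\sum_{\newatop{B_\rho(\xi)\in\B(\sigma_j)}{B_\rho(\xi)\subset U}}|\mu(B_\rho(\xi))|\log\frac{1+\sigma_{j+1}}{1+\sigma_j}.$$
For $B\in\B(t_2)$ with $B\subset U$, let $\{B^{(j)}_i\}_i\subset\B(\sigma_j)$ be its \emph{ancestors} (the balls whose forward evolution yields $B$). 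Since expansion and merging both preserve containment, $B^{(j)}_i\subset B\subset U$; since the atoms of $\mu$ inside $B$ are partitioned among the ancestors, $\sum_i\mu(B^{(j)}_i)=\mu(B)$, so the triangle inequality gives $\sum_i|\mu(B^{(j)}_i)|\ge|\mu(B)|$. Plugging this into the displayed bound, using the disjointness of ancestor histories of distinct terminal balls, and telescoping $\sum_j\log[(1+\sigma_{j+1})/(1+\sigma_j)]=\log[(1+t_2)/(1+t_1)]$ yields (\ref{sti3}).

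The main technical subtlety is the bookkeeping of ancestor histories across merges: one must verify that each terminal $B\in\B(t_2)$ has its entire ancestor family contained in $B$, that ancestor families of distinct terminal balls are disjoint (automatic since merging joins rather than splits), and that the set of merge times is locally finite so that the telescoping is justified. The degenerate cases in which $t_1$ or $t_2$ coincides with some $s_k$ are handled by left/right continuity of the construction in the Hausdorff sense, since the corresponding annuli degenerate continuously.
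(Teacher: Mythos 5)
Your construction and the treatment of (1)--(3) and (5) coincide with the paper's (expansion phases where radii scale like $1+t$, merging into enclosing balls with $r^k_j\le\sum_{B\in S^k_j}r(B)$). For (4) the paper first reduces, via superadditivity (i), to an interval $]t_1,t_2[$ containing no merge time, applies the annulus bound \eqref{lbastratto} with $B\in\B(\tau)$ (the \emph{outer} ball) and a separately established monotonicity $\sum_{B\in\B(\tau),\,B\subset U}|\mu(B)|\ge\sum_{B\in\B(t_2),\,B\subset U}|\mu(B)|$, then lets $\tau\to t_2^-$. Your telescoping-over-sub-phases with ancestor bookkeeping is a valid reorganization of the same idea.

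There is, however, a real gap in your intermediate displayed inequality: it indexes the annuli by the \emph{inner} ball $B_\rho(\xi)\in\B(\sigma_j)$ and restricts to $B_\rho(\xi)\subset U$, but that condition does \emph{not} guarantee the annulus $A_{\rho,\,\rho(1+\sigma_{j+1})/(1+\sigma_j)}(\xi)$ (whose outer radius is strictly larger) lies in $U$, and by the definition \eqref{minf} only annuli \emph{contained} in $U\cap(U(t_2)\setminus\overline U(t_1))$ may be used to lower bound $\E$. The paper avoids this by indexing on the outer ball $B\in\B(\tau)\subset U$ and sending $\tau\to t_2$. The correct fix in your setup is exactly the ancestor argument you give next, but used \emph{directly} rather than as a post-hoc rewriting of the flawed display: for a terminal ball $B\in\B(t_2)$ with $B\subset U$, every ancestor $B^{(j)}_i\in\B(\sigma_j)$ and its expansion $B'_i$ at time $\sigma_{j+1}^-$ satisfy $B^{(j)}_i\subset B'_i\subset B\subset U$, hence the annuli $B'_i\setminus\overline{B^{(j)}_i}$ are admissible, pairwise disjoint across $j$, $i$ and across distinct terminal balls; applying \eqref{lbastratto} to each, (i) to the union, then $\sum_i|\mu(B^{(j)}_i)|\ge|\mu(B)|$ and the telescoping of $\log\frac{1+\sigma_{j+1}}{1+\sigma_j}$ gives \eqref{sti3}. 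With the steps re-ordered this way the proof is sound.
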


\begin{proof}
In order to construct the family $\B(t)$,  we closely follow the  strategy of  Sandier and Jerrard in \cite{J,S}.
It consists in letting the balls alternatively expand and merge into each other as follows. In the expansion phase the balls expand, without changing their centers, in such a way that, at each (artificial) time $t$ the radius $r_i(t)$ of the ball centered at $x_i$ satisfies
\begin{equation}\label{timevel0}
\frac{r_i(t)}{r_i} = 1+t \qquad \text{ for all } i.
\end{equation}
The first expansion phase stops at the first time $T_1$ when two balls bump into each other. Then the merging phase begins. It consists in identifying a suitable partition $\{S^1_j\}_{j=1,\ldots, N_n}$ of the family $\left\{B_{r_i(T_1)}(x_i)\right\}$, and, for each subclass $S^1_j$, in  finding a ball $B_{r^1_j}(x^1_j)$ which contains all the balls in $S^1_j$ such that the following properties hold:
\begin{itemize}
\item[P1)] $B_{r^1_j}(x^1_j)\cap B_{r^1_l}(x^1_l)=\emptyset$ for all $j\neq l$;
\item[P2)] $r^1_j\ \le \sum_{B\in S^1_j} r(B)$.
\end{itemize}
After the merging phase another expansion phase begins:  we let the balls $\big\{B_{r^1_j}(x^1_j)\big\}$  expand in such a way that, for $t\geq T_1$, for every $j$ we have that
\begin{equation}\label{timevel}
\frac{r^1_j(t)}{r^1_j}=\frac{1+t}{1+T_1}\,.
\end{equation}
Again note that $r^1_j(T_1)=r^1_j$. We iterate this procedure thus obtaining a {\it discrete} set of merging times $\left\{T_1,\ldots,T_K\right\}$ with $K\le n$ and  a family $\B(t)$ for all $t\ge 0$. More precisely,  $\B(t)$ is given by $\{B_{r_j(t)}(x_j)\}_j$ for $t\in [0,T_1)$;
for  $t\in[T_k,T_{k+1})$, $\B(t)$ can be written as $\{B_{r^k_j(t)}(x_j^k)\}_j$ for all $k=1,\ldots, K-1$, while it consists of a single expanding ball for $t\ge T_K$ .
By construction, we clearly have properties (1), (2) and (3). Moreover, (5) is an easy consequence of \eqref{timevel0}, \eqref{timevel} and property P2).

It remains to show property (4). We preliminarily note that, by (2), for every open set $U\subset\R^{2}$
\begin{equation}\label{monovar}
\sum_{\newatop{B\in \B(\tau_1)}{B\subset U}}|\mu(B)|\ge \sum_{\newatop{B\in \B(\tau_2)}{ B\subset U }}|\mu(B)|\,\qquad\textrm{for any }0<\tau_1<\tau_2.
\end{equation}
 Let $t_1<\bar t<t_2$. In view of \eqref{monovar} and since $\E$ is an increasing set-function satisfying property (i), if we show that (4) holds true for  the pairs $(t_1,\bar t)$ and $(\bar t,t_2)$, then (4) follows also for $t_1$ and $t_2$. Therefore,  we can assume without loss of generality that $T_{k}\notin ]t_1,t_2[$  for any $k=1,\ldots,K$.

Let $t_1<\tau<t_2$ and let $B\in \B(\tau)$. Then there exists a unique ball $B'\in\B(t_1)$ such that $B'\sub B$. By construction, $\mu(B)=\mu(B')$ and by  \eqref{lbastratto} we have that
\begin{equation*}
\E(\B,\mu,B\setminus \bar B')\ge 2\pi\alpha|\mu(B)|\log\frac{1+\tau}{1+t_1},
\end{equation*}
which, summing up over all $B\in \B(\tau)$ with $B\subset U$, and using \eqref{monovar}, yields
\begin{eqnarray*}
\E(\B,\mu, U\cap(U(t_2)\setminus \overline U(t_1)))\ge 2\pi\alpha\sum_{\newatop{B\in \B(\tau)}{ B\subset U}}|\mu(B)|\log\frac{1+\tau}{1+t_1}\ge  2\pi\alpha\sum_{\newatop{B\in \B(t_2)}{ B\subset U}}|\mu(B)|\log\frac{1+\tau}{1+t_1} .
\end{eqnarray*}
Property (4) follows by letting $\tau\to t_2$.
\end{proof}
We recall the following well-known lemma (see e.g., \cite[Lemma 2.2]{DLP}) for the reader's convenience.

\begin{lemma}\label{bc_lemma}
Let $\Bnew$ be a family of pairwise disjoint balls in $\R^2$  and let $\Cnew$ be the family of balls in $\Bnew$ which are contained in $\Omega$.
Let moreover $\nu_1, \, \nu_2$ be two Radon measures supported in $\Omega$ with
\begin{equation*}
\supp\nu_1\subset\bigcup_{B\in\Cnew}B,\qquad \supp\nu_2\subset\bigcup_{B\in\Bnew}B\qquad\textrm{and}\quad\nu_1(B)=\nu_2(B)\quad \textrm{for any }B\in\Cnew\,.
\end{equation*}
Then, there exists a constant $C>0$ such that
$$
\|\nu_1-\nu_2\|_\flap\le C\,\rad (\Bnew)(|\nu_1|+|\nu_2|)(\Omega)\,.
$$
\end{lemma}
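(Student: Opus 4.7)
My plan is to realize the flat norm through its duality definition: $\|\nu_1-\nu_2\|_\flap = \sup\{\langle \nu_1-\nu_2,\psi\rangle : \psi\in C^{0,1}_c(\Omega),\ \|\psi\|_{C^{0,1}}\le 1\}$. Fix such a test function $\psi$. Since $\supp\nu_1\subset\bigcup_{B\in\Cnew}B$ and $\supp\nu_2\subset\bigcup_{B\in\Bnew}B$, and the balls in $\Bnew$ are pairwise disjoint, I can split
\begin{equation*}
\langle\nu_1-\nu_2,\psi\rangle=\sum_{B\in\Cnew}\int_B\psi\,\ud(\nu_1-\nu_2)\;-\sum_{B\in\Bnew\setminus\Cnew}\int_B\psi\,\ud\nu_2,
\end{equation*}
using that $\nu_1$ vanishes on balls outside $\Cnew$ by the support hypothesis and disjointness.

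For a ball $B\in\Cnew$, let $x_B$ denote its center. Since $\nu_1(B)=\nu_2(B)$, I can replace $\psi$ by $\psi-\psi(x_B)$ without changing the integral:
\begin{equation*}
\int_B\psi\,\ud(\nu_1-\nu_2)=\int_B(\psi(x)-\psi(x_B))\,\ud(\nu_1-\nu_2)(x).
\end{equation*}
The Lipschitz bound $\|\psi\|_{C^{0,1}}\le 1$ yields $|\psi(x)-\psi(x_B)|\le 2r(B)$ on $B$, and so the absolute value of that integral is at most $2r(B)(|\nu_1|(B)+|\nu_2|(B))$.

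For a ball $B\in\Bnew\setminus\Cnew$, since $B\not\subset\Omega$, there exists a point $y\in B\setminus\Omega$; extending $\psi$ by zero to $\R^2$ preserves the Lipschitz constant and gives $\psi(y)=0$. Therefore $|\psi(x)|=|\psi(x)-\psi(y)|\le 2r(B)$ for every $x\in B$, which yields $|\int_B\psi\,\ud\nu_2|\le 2r(B)|\nu_2|(B)$. Combining the two cases,
\begin{equation*}
|\langle\nu_1-\nu_2,\psi\rangle|\le 2\sum_{B\in\Bnew}r(B)\big(|\nu_1|(B)+|\nu_2|(B)\big)\le 2\,\rad(\Bnew)(|\nu_1|+|\nu_2|)(\Omega),
\end{equation*}
where the last step uses $r(B)\le\rad(\Bnew)$ for each $B$ together with the disjointness of the balls (so $\sum_B(|\nu_1|(B)+|\nu_2|(B))\le(|\nu_1|+|\nu_2|)(\Omega)$). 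Taking the supremum over $\psi$ gives the desired estimate with $C=2$.

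The main conceptual point, and the only mildly subtle step, is handling the balls in $\Bnew\setminus\Cnew$: here the matching condition $\nu_1(B)=\nu_2(B)$ is not available, so the small factor $r(B)$ must come from the compact support of $\psi$ inside $\Omega$ rather than from cancellation. All other steps are routine.
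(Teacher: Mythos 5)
Your proof is correct. The paper does not give its own argument for this lemma---it is stated with a citation to [DLP, Lemma~2.2]---so there is no in-paper proof to compare against, but the route you take (duality characterization of the flat norm, ball-by-ball decomposition, subtracting $\psi(x_B)$ on balls in $\Cnew$ where the matching $\nu_1(B)=\nu_2(B)$ makes the constant cancel, and exploiting the vanishing of the compactly supported $\psi$ near $\partial\Omega$ on balls in $\Bnew\setminus\Cnew$) is the standard one for estimates of this type and matches how the cited reference argues. You correctly identify that the only non-routine step is the treatment of balls not contained in $\Omega$: there the factor $r(B)$ cannot come from the cancellation $\nu_1(B)=\nu_2(B)$, and your observation that the zero extension of $\psi$ across $\partial\Omega$ remains $1$-Lipschitz (so that $\lvert\psi\rvert\le 2r(B)$ on any ball meeting $\Omega^c$) is exactly what is needed. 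The final bookkeeping, using disjointness of the balls and $r(B)\le\rad(\Bnew)$, is also correct and yields the explicit constant $C=2$.
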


\section{General $\Gamma$-liminf inequality}\label{sec:genlinf}
In this section we state and prove an asymptotic lower-bound estimate for general core-radius approach functionals (see Propositions \ref{mainthmcranew} and \ref{mainthmcranewdelta>ep}); such results will be instrumental for the proofs of the $\Gamma$-liminf inequalities in Theorems \ref{intro: mainthmgl}, \ref{mainthmcra}, \ref{cratbw} and \ref{gltbw}.

We introduce the increasing set-function $\E$ satisfying the assumptions (i) and (ii) in Section \ref{bc:section} as follows.
Let $\B=\{B_{r_1}(x_1), \ldots,B_{r_N}(x_n)\}$ be a finite family of open balls in $\R^2$ with $\bar B_{r_i}(x_i) \cap \bar B_{r_j}(x_j) = \emptyset$ for $i\neq j$, and
let  $\mu=\sum_{i=1}^n z_i\delta_{x_i}$ with $z_i\in\Z\setminus\{0\}$\,.

If $A_{r,R}(x)$ is an annulus  that does not intersect any $B_{r_i}(x_i)$, we set
\begin{equation}\label{preminf}
\mathfrak{G}(\B, \mu, A_{r,R}(x)) :=2\pi \alpha |\mu(B_r(x))|  \log\Big(\frac{R}{r}\Big)\,,
\end{equation}
with $\alpha$ as in assumption \eqref{Gprop}. For every open set $A\subset\R^2$ we set
\begin{equation}\label{minf}
\E(\B, \mu,A):= \sup \sum_j \mathfrak{G}(\B, \mu, A_j),
\end{equation}
where the supremum is taken over all finite families of disjoint annuli $A_j\subset A$ that do not intersect any $B_{r_i}(x_i)$.
Note that, if $A$ is an annulus that does not intersect any $B_{r_i}(x_i)$,  then $\E(\B, \mu,A)= \mathfrak{G}(B, \mu,A)$.
\begin{remark}\label{disuene}
{\rm
    The convenience of introducing $\E$ in \eqref{minf} to prove a lower-bound inequality for (an appropriate scaling of) the functional $\precrw$ in \eqref{intro:cra_en_0} will be clear in the following sections. However, the following simple observation already points in the right direction. Let $\Om(\B) = \Om\setminus \bigcup_{B\in\B} \overline B$, $w\in H^1 (\Om(\B);\mathcal S^1)$ and $\mu:=\sum_{B\in\Ccal} \deg(w,\partial B) \delta_{x_B}$ where $\Ccal$ denotes the family of balls in $\B$ that are contained in $\Om$, and  $x_B$ is the center of $B$. Then, by Jensen's inequality and by the lower bound in \eqref{Gprop}, we deduce that
\begin{equation}\label{impbound}
\E(\B, \mu, U)\le  \int_{U\cap\tilde\Om} \alpha|\nabla w|^2\ud x\le  \precrw(w;{U\cap\Om(\B)})
\end{equation}
for every open set $U\subset \Om$\,.
}
\end{remark}
For every $\mu\in X(\Omega)$ and for every family of pairwise disjoint balls $\B$ such that $\supp\mu\subset\bigcup_{B\in\B}B$, we set
\begin{equation*}
\AF(\mu,\B):=\{w\in H^1(\Omega(\B);\mathcal S^1)\,:\,\deg(w,\partial B)=\mu(B)\quad\textrm{for every }B\in\B\}.
\end{equation*}
In addition we set
\begin{equation}\label{cra_new}
\precr(\mu,\B):=\inf_{w\in\AF(\mu,\B)}\precrw(w;\Omega(\B))\,,
\end{equation}
where $F_{\delta_\ep}$ is defined in \eqref{intro:cra_en_0} and $f$ satisfies \eqref{Pprop}, \eqref{Gprop}, \eqref{Hprop}\,.

Note that if $\mu=\sum_{i=1}^{n}z_i\delta_{x_i}\in X_{\ep}(\Omega)$ for some $\ep>0$,  setting $\B_\ep=\{B_{\ep}(x_i)\}_{i=1,\ldots,n}$, we have that $\AF(\mu,\B_\ep)$ coincides with the set $\AF_\ep(\mu)$ defined in \eqref{admst} and that $\precr(\mu,\B_\ep)=\precr(\mu)$\,.
We are now in a position to state the first main result of this section, concerning the case $\delta_\ep\lesssim\ep$\,.
\begin{proposition}\label{mainthmcranew}
Let $\{\mu_\ep\}_{\ep}\subset X(\Omega)$ be such that
\begin{equation}\label{tvbound0}
|\mu_\ep|(\Omega)\le C|\log\ep|
\end{equation}
and $\mu_\ep\fla\mu$ for some $\mu\in X(\Omega)$\,. For every $\ep>0$ let $\B_\ep$ be a finite family of pairwise disjoint open balls such that $\supp\mu_\ep\subset \bigcup_{B\in\B_\ep} B$ and
\begin{equation}\label{radbound0}
\rad(\B_\ep)\le C\ep|\log\ep|.
\end{equation}
If $\limsup_{\ep\to 0}\frac{\delta_{\ep}}{\ep}<+\infty$, then
\begin{equation*}
\liminf_{\ep\to 0}\frac{1}{|\log\ep|}\precr(\mu_\ep,\B_\ep)\ge \Fzero(\mu)\,,
\end{equation*}
where $\Fzero$ is defined in \eqref{gali} and $\Psi(\cdot;T\!f_{\hom})$ is defined in \eqref{defPsi} for $h=T\!f_{\hom}$.
\end{proposition}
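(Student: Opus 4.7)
The plan is to fuse the abstract ball construction of Proposition \ref{ballconstr} with the sharp annular estimate of Proposition \ref{homdeg}, effectively replacing the coarse ellipticity constant $2\pi\alpha$ in axiom (ii) of Section \ref{bc:section} by $\psi(\cdot;T\!f_{\hom})$. First I pass to a subsequence realizing the liminf and pick near-minimizers $w_\ep\in\AF(\mu_\ep,\B_\ep)$ with $F_{\delta_\ep}(w_\ep;\Omega(\B_\ep))\le\precr(\mu_\ep,\B_\ep)+1$. Fixing $s\in(0,1)$ (to be sent to $0^+$ at the end of the argument), I apply Proposition \ref{ballconstr} to $(\B_\ep,\mu_\ep,\E)$ and stop at the first time $t_\ep$ with $\rad(\B_\ep(t_\ep))=\ep^{s}$. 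Property (5), combined with $\rad(\B_\ep)\le C\ep|\log\ep|$, yields $\log(1+t_\ep)\ge(1-s)|\log\ep|-O(\log|\log\ep|)$; since $\delta_\ep\le C\ep\ll\ep^{s}$, Proposition \ref{homdeg} is applicable on all the annular scales arising hereafter, and by Remark \ref{disuene} we have $\E(\B_\ep,\mu_\ep,A)\le F_{\delta_\ep}(w_\ep;A\cap\Omega(\B_\ep))$ for every open $A\subset\Omega$.

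The central step revisits the proof of Proposition \ref{ballconstr}(4) and replaces, on each clean shell annulus $A_{r_B(T_k),r_B(T_{k+1})}(x_B)$ arising during a no-merging interval $[T_k,T_{k+1}]$, the coarse inequality \eqref{lbastratto} by the sharper estimate delivered by Proposition \ref{homdeg} in the center-shifted form of Remark \ref{homdegrmk}:
\begin{equation*}
F_{\delta_\ep}\bigl(w_\ep;A_{r_B(T_k),r_B(T_{k+1})}(x_B)\bigr)\ge\psi(\mu_\ep(B);T\!f_{\hom})\,\log\frac{r_B(T_{k+1})}{r_B(T_k)}-o(|\log\ep|),
\end{equation*}
valid whenever the aspect ratio of the annulus represents a positive fraction of $|\log\ep|$. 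Summing over balls within each expansion phase and telescoping along the ancestry of every $B\in\B_\ep(t_\ep)$ (using conservation of degree under merging together with $\sum_j\psi(d_j;T\!f_{\hom})\ge\sum_j\Psi(d_j;T\!f_{\hom})\ge\Psi(\sum_j d_j;T\!f_{\hom})$ from the sub-additivity recorded in Remark \ref{propePsi}) leads to
\begin{equation*}
F_{\delta_\ep}(w_\ep;\Omega(\B_\ep))\ge(1-s)|\log\ep|\sum_{\substack{B\in\B_\ep(t_\ep)\\ B\subset\Omega}}\psi(\mu_\ep(B);T\!f_{\hom})-o(|\log\ep|).
\end{equation*}

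To close the argument I associate to $\B_\ep(t_\ep)$ the atomic measure $\nu_\ep:=\sum_{B\in\B_\ep(t_\ep)}\mu_\ep(B)\,\delta_{x_B}$, where $x_B$ is the center of $B$. Lemma \ref{bc_lemma} together with $\rad(\B_\ep(t_\ep))\,|\mu_\ep|(\Omega)\le C\ep^{s}|\log\ep|\to 0$ gives $\nu_\ep\fla\mu$. The trivial decomposition in \eqref{defPsi} yields the pointwise inequality $\psi(z;T\!f_{\hom})\ge\Psi(z;T\!f_{\hom})$, whence $\sum_B\psi(\mu_\ep(B);T\!f_{\hom})\ge\Fzero(\nu_\ep)$. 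Dividing by $|\log\ep|$, passing to the liminf, and invoking the flat lower semicontinuity of $\Fzero$ from Remark \ref{propePsi} one obtains $\liminf_\ep\precr(\mu_\ep,\B_\ep)/|\log\ep|\ge(1-s)\Fzero(\mu)$, and sending $s\to 0^+$ concludes the proof.

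The main obstacle lies in the refined inequality of the second paragraph: Proposition \ref{homdeg} is an asymptotic statement whose error depends on the ratio between the log-aspect of the annulus and $|\log\ep|$, so phases of the ball construction with small aspect ratio cannot be treated in isolation. The telescoping has therefore to be performed on a coarsened subdivision of $[0,t_\ep]$ whose slabs each capture a fixed positive fraction of $|\log\ep|$ while still being dissectable into clean shell annuli centered at ball-construction centers; keeping track of how the sub-additive passage from $\psi$ to $\Psi$ interacts with this coarsening is the technical heart of the argument. Balls that may protrude across $\partial\Omega$ during the construction require the standard ancillary step of working on a slightly contracted subdomain and absorbing the escaping mass via the flat convergence $\mu_\ep\fla\mu$.
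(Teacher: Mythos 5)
Your high-level strategy matches the paper's: combine the ball construction with the refined annular estimate of Proposition \ref{homdeg}, and harvest the sub-additivity of $\Psi$ at the end. But the central step, as you state it, does not go through, and your own closing paragraph only gestures at a fix without supplying it.

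The displayed inequality in your second paragraph asks Proposition \ref{homdeg} to deliver, on each no-merging shell $A_{r_B(T_k),r_B(T_{k+1})}(x_B)$ of the ball construction, a lower bound of the form $\psi(\mu_\ep(B);T\!f_{\hom})\log\frac{r_B(T_{k+1})}{r_B(T_k)}-o(|\log\ep|)$. Proposition \ref{homdeg} is an asymptotic identity on annuli with inner and outer radii $\ep^{s_2},\ep^{s_1}$ for \emph{fixed} exponents $0\le s_1<s_2<1$; it produces, for such a fixed geometry, an error $o(|\log\ep|)$. The ball-construction shells do not have this property: the number of merging times is bounded only by $|\mu_\ep|(\Omega)\le C|\log\ep|$, so there can be up to $O(|\log\ep|)$ intervals, and nothing prevents almost all of them from having logarithmically negligible aspect ratio. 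Summing an $o(|\log\ep|)$ error across $O(|\log\ep|)$ shells destroys the estimate. You acknowledge this, but the proposed remedy (``a coarsened subdivision of $[0,t_\ep]$ whose slabs \dots are dissectable into clean shell annuli centered at ball-construction centers'') is not what the paper does and is not obviously achievable: after coarsening, the slabs are no longer annuli around single ball-construction centers because mergers happen inside them, so there is nothing ``clean'' to dissect.

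The paper's resolution is structurally different. It first localizes to a single limiting vortex $\mu=z_0\delta_{x_0}$, then exploits the key quantitative fact (your sketch omits it) that the measure $\mu_\ep(p)$ read off the ball construction at the scale $\ep^p$ has \emph{uniformly bounded} total variation, $|\mu_\ep(p)|(\Omega)\le C_p$. This bounds the number of surviving sub-vortices by a constant $L$ independent of $\ep$ and makes possible the crucial combinatorial step: introduce $g_\ep(q)$, the number of connected components of $\bigcup_l B_{\ep^q}(x_\ep^l)$, observe that it has at most $L$ jumps in $q\in[\bar p,p']$, extract a subsequence along which the jump locations converge to a fixed finite set $\triangle$, and then build pairwise disjoint \emph{fat} annuli $C^{i,m}_\ep$ with inner/outer radii $\ep^{q^{i+1}-\eta},\ep^{q^i+\eta}$ enclosing the clusters. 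These annuli have fixed aspect exponents and are centered at auxiliary points $y^m_\ep$, not at ball-construction centers, so Proposition \ref{homdeg} applies directly with a total error that is a sum of finitely many $o(|\log\ep|)$ terms. Two further ingredients you omit are also load-bearing: the clean-interval Lemma \ref{lemmaus} supplying an annular layer of $O(1)$ energy, and the harmonic-type extension of $w_\ep$ across the zero-degree balls so that $\hat w_\ep$ is defined (with controlled extra energy) on the fat annuli $C^{i,m}_\ep$, which may contain such balls. Without those, the annular infima in Proposition \ref{homdeg} cannot be tested with (an extension of) $w_\ep$. In short: the idea of replacing the coarse ball-construction bound by $\psi$ is right, but the mechanism is not a telescoping along ball-construction times; it is a passage to a fixed finite family of scales dictated by the clustering combinatorics, enabled by the uniform bound on the number of sub-vortices, and it requires the extension step to make the annuli usable.
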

\begin{proof}
For every $\ep>0$, let $w_\ep\in \AF(\mu_\ep,\B_\ep)$ be such that
\begin{equation}\label{infmin0}
\precrw(w_\ep; \Omega(\B_\ep))\le \precr(\mu_\ep,\B_\ep)+C
\end{equation}
for some constant $C$ independent of $\ep$. We can assume without loss of generality that
\begin{equation}\label{energybound}
\precrw(w_\ep; \Omega(\B_\ep))\le \precr(\mu_\ep,\B_\ep)+C \le C|\log\ep|\,.
\end{equation}
Moreover, by a standard localization argument in $\Gamma$-convergence, we can assume that $\mu=z_{0}\delta_{x_{0}}$ for some $z_{0}\in\Z\setminus\{0\}$ and $x_{0}\in\Omega$\,. In view of \eqref{energybound}, by exploiting assumption \eqref{Gprop} and by applying \eqref{impbound} with $U=\Omega$,
we have that
\begin{equation}\label{impbound2}
 \E(\B_\ep, \mu_\ep, \Omega)\le \alpha\int_{\Omega(\B_\ep)}|\nabla w_\ep|^2\ud x\le C|\log\ep|\,,
\end{equation}
where $\E$ is defined in \eqref{preminf}-\eqref{minf}\,.
For every $\ep>0$, let $\B_{\ep}(t)$ be a time-parametrized family of balls introduced as in Proposition \ref{ballconstr}, starting from $\B_\ep=:\B_\ep(0)$\,.
For every $t\ge 0$, we set $\sr_\ep(t):=\rad(\B_\ep(t))$\,, $\Ccal_\ep(t):= \{ B\in\B_\ep(t)\,: \, B\subset \Om\}$ and $U_\ep(t):=\bigcup_{B\in\B_\ep(t)}B$\,.
Moreover, for any $0<p<1$ we set
\begin{equation*}
t_\ep(p):=\frac{1}{\sr_\ep^{1-p}(0)} - 1 \quad\text{ and }\quad
\mu_\ep(p):= \sum_{B\in \Ccal_\ep(t_\ep(p))}  \mu_\ep(B) \delta_{x_B}\,.
\end{equation*}
By \eqref{impbound2}, applying \eqref{sti3} with $U=\Omega$, $t_1=0$ and $t_2=t_\ep(p)$\,, and using \eqref{radbound0},  we obtain
\begin{align*}
 C |\log \ep| &\ge \E(\B_\ep, \mu_\ep,\Om\cap (U_\ep(t_\ep(p))\setminus U_\ep(0)))\ge
2\pi\alpha\sum_{B\in \Ccal_\ep(t_\ep(p))}|\mu_\ep(B)|(1-p)|\log\sr_\ep(0)|\\
&=2\pi \alpha(1-p)|\mu_\ep(p)|(\Omega)|\log\sr_\ep(0)|
\ge C(1-p)|\mu_\ep(p)|(\Omega)|\log\ep|\,
\end{align*}
 for sufficiently small $\ep$. Therefore
\begin{equation}\label{tvb}
|\mu_\ep(p)|(\Omega)\le C_{p},
\end{equation}
for some constant $C_{p}>0$ depending on $p$ (and independent of $\ep$).
By Proposition \ref{ballconstr}(5) and \eqref{radbound0}, we have that
\begin{equation*}
\sr_\ep(t_\ep(p))\le \sr^p_\ep(0)\le C \ep^p|\log\ep|^{p}\,,
\end{equation*}
whence, by applying Lemma \ref{bc_lemma} with $\nu_1=\mu(p)_\ep$ and $\nu_2=\mu_\ep$, we deduce that
\begin{equation*}
\|\mu_\ep-\mu_\ep(p)\|_{\flap}\le C\sr_\ep(t_\ep(p))(|\mu_\ep|+|\mu_\ep(p)|)(\Omega)\le C\ep^p|\log\ep|^{1+p}\to 0\quad\textrm{as }\ep\to 0\,.
\end{equation*}
Combining this relation with \eqref{tvb} and the fact that $\mu_\ep\fla\mu$ yields
\begin{equation}\label{flastar}
\mu_\ep(p)\weakstar\mu=z_{0}\delta_{x_{0}}\,,\qquad\textrm{for every }0<p<1\,.
\end{equation}
Let $c>1$ be such that $\log c<\frac p 2\frac{|\log\sr_\ep(0)|}{|\mu_\ep|(\Omega)+1}$. Note that, since $|\log\sr_\ep(0)|\geq C|\log\ep|$ and $|\mu_{\e}|(\Omega)\leq C|\log\e|$, we are allowed to take the constant $c$ in the previous inequality independent of $\e$.
By Lemma \ref{lemmaus} below (applied with $p_1=p$ and $p_2=\frac p 2$) there exist $t_\ep(p)\le \hat t_{\ep,1}<\hat t_{\ep,2}\le t_\ep({\frac p 2})$ with $(1+\hat t_{\ep,2})=c(1+\hat t_{\ep,1})$ such that no merging occurs in the interval $[\hat t_{\ep,1},\hat t_{\ep,2})$  and
\begin{equation}\label{mva}
\begin{aligned}
\int_{\Omega\cap (U(\hat t_{\ep,2})\setminus \overline{U}(\hat t_{\ep,1}))}|\nabla w_\ep|^2\ud x &\le \frac{\log c \int_{\Omega_\ep(\mu_{\e}')}|\nabla w_\ep|^2\ud x}{\frac p 2|\log\sr_\ep(0)|-\log c(|\mu_\ep|(\Omega)+1)}\\[1.5mm]
&\le\frac{\frac{\log c}{\alpha}\precrw(w_\ep; \Omega(\B_\ep))}{\frac p 2(|\log\ep|-\log|\log\ep|+C)-\log c\,(C|\log\ep|+1)}\le  C\,,
\end{aligned}
\end{equation}
where the last but one inequality follows from \eqref{radbound0} and \eqref{tvbound0}, whereas the last inequality is a consequence of \eqref{energybound}.
We classify the balls in $\Ccal_\ep(\hat t_{\ep,1})$ into two subclasses, namely
\begin{equation}\label{cccal}
\Ccal_\ep^{=0}(\hat t_{\ep,1}):=\left\{B\in\Ccal_\ep(\hat t_{\ep,1})\,:\,\mu_\ep(B)=0\right\}\text{ and }
\Ccal_\ep^{\neq 0}(\hat t_{\ep,1}):=\left\{B\in\Ccal_\ep(\hat t_{\ep,1})\,:\,\mu_\ep(B)\neq 0\right\}\,.
\end{equation}
We first consider the balls in $\Ccal_\ep^{=0}(\hat t_{\ep,1})$\,. For every such ball $B$ we let $\hat B$ denote the only ball in $\Ccal_\ep(\hat t_{\ep,2})$ containing $B$\,. Note that, the center $x_B$ of $B$ is the same as the center of $\hat B$\,. By \eqref{mva}, we have that
$$
\sum_{B\in \Ccal_\ep^{=0}(\hat t_{\ep,1})}\int_{\hat B\setminus B}|\nabla w_\ep|^2\ud x\le C\,.
$$
Now we extend the function $w_\ep$ to a function $\hat w_\ep\in H^1(\Omega_\ep(\mu_\ep)\cup \bigcup_{B\in  \Ccal_\ep^{=0}(\hat t_{\ep,1})}B;\mathcal S^1)$ in such a way that for every $B$ and $\hat B$ as above
\begin{equation}\label{extension}
\|\nabla \hat w_\ep\|_{L^2(\hat B;\R^{2\times 2})}\le \hat C\, \|\nabla w_\ep\|_{L^2(\hat B\setminus B;\R^{2\times 2})}\,,
\end{equation}
for some universal constant $\hat C$. We consider $B=B_{R}(\xi)$ and $\hat B=B_{cR}(\xi)$ two balls as above. Since $\deg(w_\ep,\partial  B_R(\xi))=\deg(w_\ep, \partial B_{cR}(\xi))=0$, by arguing as in \cite{BZ} (see also \cite {BB}), one can show that there exists a lifting $u_\ep^{A_{R,cR}(\xi)}\in H^1(A_{R,cR}(\xi))$ of $w_\ep$ in $A_{R,cR}(\xi)$. Let $U_\ep:A_{\frac{R}{c},R}(\xi)\to \R$ be the extension by reflection of the function $u_\ep^{A_{R,cR}(\xi)}$ to the annulus  $A_{\frac{R}{c},R}(\xi)$, i.e., $U_\ep(x):=u_\ep^{A_{R,cR}(\xi)}(\xi-c(x-\xi)+(1+c)R\frac{x-\xi}{|x-\xi|})$. We let $\bar U_\ep$ denote the average of $U_\ep$ on $A_{\frac{R}{c},R}(\xi)$. Let $\eta:[\frac{R}{c}, R]\to\R$ be the cut-off function defined by $\eta(\rho)=\frac{c\rho-R}{R(c-1)}$\,. We define the function $\hat u_\ep^{\hat B}:B_{cR}(\xi)\to \R$ as
$$
\hat u_\ep^{\hat B}(x):=\left\{\begin{array}{ll}
u_\ep^{A_{R,cR}(\xi)}&\textrm{if }x\in A_{R,cR}(\xi),\\
\eta(|x|)U_\ep(x)+(1-\eta(|x|))\bar U_\ep&\textrm{if }x\in A_{\frac{R}{c},R}(\xi),\\
\bar U_\ep&\textrm{if }x\in B_{\frac R c}(\xi).
\end{array}\right.
$$
By the Poincar\'e-Wirtinger inequality and by the very definition of $U_\ep$ we have that there exists a constant $\hat C$ (independent of $\ep$) such that
\begin{equation*}
\begin{aligned}
\int_{A_{\frac{R}{c},R}(\xi)}|\nabla \hat u_\ep^{\hat B}|^2\ud x
=&\int_{A_{\frac{R}{c},R}(\xi)}|\nabla (\eta(|x|)(U_\ep(x)-\bar U_\ep))|^2\ud x\\
\le\,& 2\,\frac{c^2}{R^2(c-1)^2}\int_{A_{\frac{R}{c},R}(\xi)}|U_\ep(x)-\bar U_\ep(x)|^2\ud x+2\int_{A_{\frac{R}{c},R}(\xi)}|\nabla U_\ep|^2\ud x\\
\le\,& C\,\int_{A_{\frac{R}{c},R}(\xi)}|\nabla U_\ep|^2\ud x\le \hat C\int_{A_{\frac{R}{c},R}(\xi)}|\nabla u_\ep^{A_{R,cR}(\xi)}|^2\ud x\,.
\end{aligned}
\end{equation*}
Therefore, setting
\begin{equation*}
\hat w_\ep(x):=\left\{\begin{array}{ll}
e^{\iota \hat u_\ep^{\hat B}(x)}&\textrm{if }x\in \hat B\textrm{ for some }\hat B\in \Ccal^{= 0}_{\ep}(\hat t_{\ep,1})\,,\\
w_\ep(x)&\textrm{elsewhere in }\Omega(\B_\ep)\,,
\end{array}
\right.
\end{equation*}
we have that $\hat w_\ep\in H^1(\Omega(\B_\ep)\cup \bigcup_{B\in  \Ccal_\ep^{=0}(\hat t_{\ep,1})}B;\mathcal S^1)$ and satisfies \eqref{extension}.
Then from \eqref{mva} and \eqref{extension} we deduce that
\begin{equation}\label{tozero}
\sum_{B\in \Ccal^{=0}_{\ep}(\hat t_{\ep,1})}\int_{B}|\nabla \hat w_\ep|^2\ud x\le C\,.
\end{equation}

We now focus on the balls in $\Ccal^{\neq 0}_{\ep}(\hat t_{\ep,1})$\,. We set
$\mu(\hat t_{\ep,1}):=\sum_{B\in\Ccal^{\neq 0}_{\ep}(\hat t_{\ep,1})}\mu_\ep(B)\delta_{x_B}$\,. In view of the ball construction in Section \ref{bc:section} and of \eqref{tvb}, we have that $\sharp \Ccal_\ep^{\neq 0}(\hat t_{\ep,1})\le C_{p}$. Therefore, up to extracting a subsequence we may assume that $\sharp \Ccal_\ep^{\neq 0}(\hat t_{\ep,1})=L$ for every $\ep>0$ and for some $L\in\N$\,.
For every $l=1,\ldots, L$, let $x_\ep^l$ be the center of the $l$-th ball $B_\ep^l$ in  $\Ccal_\ep^{\neq 0}(\hat t_{\ep,1})$\,.
Up to a further subsequence, we can assume that the points $x^l_\ep$ converge to some points in the finite set $\{\xi_{0}=x_{0},\xi_{1},\ldots,\xi_{{L'}}\}\subset\bar\Omega$, where $L'\le L$\,. Let $\rho>0$ be such that $B_{2\rho}(x^0)\sub\sub\Omega$ and $B_{2\rho}(\xi_{j})\cap B_{2\rho}(\xi_{k})=\emptyset$ for all $j\neq k$\,. Then $x_\ep^l\in B_{\rho}(\xi_{j})$ for some $j=1,\ldots,L'$ and for $\ep$ small enough.
We set
$$
\tilde\mu_\ep:=\sum_{x_{\ep}^l\in B_\rho(x_{0})}\mu_\ep(B^l_\ep)\delta_{x_\ep^l}\,.
$$
By construction,  we have that
\begin{equation}\label{befdsum}
|\tilde\mu_\ep|(\Omega)\le |\mu_\ep(p)|(\Omega)\quad\text{and }\quad\|\tilde\mu_\ep-\mu_\ep(p)\|_{\flap}\to 0\,,
\end{equation}
which, in view of \eqref{tvb} and \eqref{flastar}, implies that, up to a subsequence,
$\tilde\mu_\ep\weakstar\mu=z_{0}\delta_{x_{0}}$\,.
Therefore,  for sufficiently small $\ep$,
\begin{equation}\label{dsum2}
\tilde\mu_\ep(B_{2\rho}(x_{0}))=\sum_{x_{\ep}^l\in B_\rho(x_{0})}\mu_\ep(B^l_\ep)=z_{0}\,.
\end{equation}
Thanks to \eqref{tozero} and the assumption \eqref{Gprop}, we have that
\begin{equation}\label{almostfin}
\begin{aligned}
\precrw(w_\ep;\Omega(\B_\ep))&\ge\int_{\Omega(\B_\ep)\setminus \cup_{B\in\Ccal^{=0}_{\ep}(\hat t_{\ep,1}) }B}f\Big(\frac{x}{\delta_\ep},\nabla w_\ep\Big)\ud x\\
&\ge \int_{\Omega(\B_\ep)\cap B_{2\rho}(x_{0})}f\Big(\frac{x}{\delta_\ep},\nabla \hat w_\ep\Big)\ud x-C\,.
\end{aligned}
\end{equation}
It remains to prove the lower bound for the right-hand side of $\eqref{almostfin}$\,. To this end, we take $0<p'<p$ such that
$\sr_\ep(\hat t_{\ep,1})\le \ep^{p'}$ (note that such $p'$ always exists since, by Lemma \ref{lemmaus}, $\sr_\ep (\hat t_{\ep,1})< \sr_\ep^{\frac p 2}(0)\le C\ep^{\frac p 2}|\log\ep|^{\frac p 2}$),  choose $0<\bar p<p'$\,  and let $g_\ep:[\bar p,p']\to\{1,\ldots,L\}$ denote the function which associates to any $q\in [\bar p,p']$ the number $g_\ep(q)$ of connected components of the set $\bigcup_{l=1}^{L}B_{\ep^q}(x_\ep^l)$\,. For every $\ep>0$\,, the function $g_\ep$ is monotonically non decreasing so that it can have at most $\hat L\leq L$ discontinuities. Let $q_\ep^j$, for $j=1,\ldots,\hat L$, denote the discontinuity points of $g_{\e}$ and assume that
$$
\bar p\le q_\ep^1<\ldots<q_\ep^{\hat L}\le p'\,.
$$
There exists a finite set $\triangle=\{q^1,q^{2},\ldots,q ^{\tilde L}\}$ with $q^i<q^{i+1}$ and $\tilde L\le \hat L$ such that, up to a subsequence, $\{q_\ep^j\}_{\ep}$ converges to some point in $\triangle$\,, as $\ep\to 0$ for every $j=1,\ldots,\hat L$\,. Without loss of generality we may assume that $q^1=\bar p$\,, and that $q^{\tilde L}=p'$\,. Let $\lambda>0$ be such that $4\lambda<\min \{q^{i+1}-q^{i}:\, i\in\{1,2,\dots,\tilde L\}\}$ and let $\ep$ be so small that for every $j=1,\ldots, \hat L$\,, $|q_\ep^j-q^i|<\lambda$ for some $q^i\in\triangle$. Then the function $g_\ep$ is constant in the interval $[q^i+\lambda, q^{i+1}-\lambda]$\,, its value being denoted by $M_\ep^i$. For every $i=1,\ldots,\tilde L-1$ we construct a family of $M_\ep^i\le \tilde L-1$ annuli that we let $C_\ep^{i,m}:=B_{\ep^{q^i+\lambda}}(y_\ep^m)\setminus \overline{B}_{\ep^{q^{i+1}-\lambda}}(y_\ep^m)$ with $y_\ep^m\in B_{\rho}(x_{0})$ and  $m=1,\ldots,M_\ep^i$. The annuli $C_\ep^{i,m}$ can be taken pairwise disjoint for all $i$ and $m$ and such that
$$
\bigcup_{x_\ep^l\in B_{\rho}(x_{0})} B_\ep^l\subset\bigcup_{m=1}^{M_\ep^i}B_{\ep^{q^{i+1}-\lambda}}(y_\ep^m)
$$
for all $i=1,\ldots,\tilde L-1$\,. Note that, for $\ep$ small enough, $C_\ep^{i,m}\subset B_{2\rho}(x_{0})$ for all $i$ and $m$\,. By \eqref{befdsum} we have that $|\mu_\ep (B_{\ep^{q^{i+1}-\lambda}}(y_\ep^m))|\le C$ for every $i=1,\ldots,\tilde L-1$ and $m=1,\ldots, M_\ep^i$\,. Therefore, up to passing to a further subsequence, we can assume that $M_\ep^i=M^i$ and that $\mu_\ep (B_{\ep^{q^{i+1}-\lambda}}(y_\ep^m))=z_{i,m}\in\Z\setminus\{0\}$\,, with $M^i$ and $z_{i,m}$ independent of $\ep$\,. Finally, in view of \eqref{dsum2}, we have that
\begin{equation}\label{dsum4}
\sum_{m=1}^{M^i} z_{i,m}= z_{0}\,.
\end{equation}
Observe that the assumption $\limsup_{\ep\to 0}\frac{\delta_{\ep}}{\ep}<+\infty$ implies the inequality $\lim_{\ep\to 0}\frac{\delta_{\ep}}{\ep^{q^{i+1}-\lambda}}=0$ for every $i$. Hence, we can apply Proposition \ref{homdeg}  
with $s_{1}=q^{i}+\lambda<q^{i+1}-\lambda=s_2$ (see also Remark \ref{homdegrmk})
 to get that for every $i$ and $m$ there exists a modulus of continuity $\omega$ such that
\begin{equation*}
\frac{1}{|\log\ep|}\int_{C_\ep^{i,m}}f\Big(\frac{x}{\delta_\ep},\nabla \hat w_\ep\Big)\ud x\ge (q^{i+1}-q^i-2\lambda)\psi(z_{i,m};T\!f_{\hom})-\omega(\ep)\,.
\end{equation*}
Summing the previous inequality over $m$ and $i$ and using \eqref{almostfin} yields
\begin{equation}\label{final2}
\begin{aligned}
\frac 1 {|\log\ep|}  {\precrw(w_\ep;\Omega(\B_\ep))} &\ge \sum_{i=1}^{\tilde L-1}\sum_{m=1}^{M^i} (q^{i+1}-q^i-2\lambda)\psi (z_{i,m};T\!f_{\hom})-\omega(\ep)\\
&\ge \sum_{i=1}^{\tilde L-1} (q^{i+1}-q^i-2\lambda) \Psi(z_{0};T\!f_{\hom})-\omega(\ep)\\
&= (p'-\bar p- 2(\tilde L-1)\lambda)\Psi(z_{0};T\!f_{\hom})-\omega(\ep)\,,
\end{aligned}
\end{equation}
where the second inequality follows from \eqref{dsum4} and from the very definition of $\Psi$ in \eqref{defPsi}.
Then, the claim follows by \eqref{final2} taking the limits as $\ep\to 0$\,, $\lambda\to 0$\,, $\bar p\to 0$\,, and $p,p'\to 1$ and using \eqref{infmin0}.
\end{proof}
We turn to the technical lemma that has been exploited in the proof of Proposition \ref{mainthmcranew} above (see formula \eqref{mva}).
For every $p\in (0,1)$ let $t(p):=\frac{1}{\rad^{1-p}(\B)}-1$\,.
\begin{lemma}\label{lemmaus}
Let $\mu\in X(\Omega)$ and  let $\B$ be a finite family of pairwise disjoint open balls such that $\supp\mu\subset \bigcup_{B\in\B} B$ and $\rad(\B)<1$.
Assume that  $0<p_2<p_1<1$, and  $c>1$ be such that $\log c<(p_1-p_2)\frac{|\log\rad(\B)|}{|\mu|(\Omega)+1}$. Assume also that $\B(t)$ is a time parametrized family of balls constructed as in Proposition \ref{ballconstr}, starting from $\B=:\B(0)$, and $U(t):=\bigcup_{B\in\B(t)}B$ for every $t\ge 0$\,.
Then, there exist $\hat t_{1}, \hat t_{2}\in [t(p_1), t( p_2))$ with $1+\hat t_{2}=c(1+\hat t_{1})$ such that no merging occurs in the interval $[ \hat t_{1}, \hat t_{2})$ and
\begin{equation*}
\int_{\Omega\cap (U(\hat t_{2})\setminus \overline{U}(\hat t_{1}))}|\nabla w|^2\ud x\le \frac{\log c \int_{\Omega(\B)}|\nabla w|^2\ud x}{(p_1- p_2)|\log\rad(\B)|-\log c(|\mu|(\Omega)+1)}\,,
\end{equation*}
for every $w\in\AF(\mu,\B)$.
\end{lemma}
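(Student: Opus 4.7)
The plan is to change variables to logarithmic time $\tau=\log(1+t)$: the multiplicative constraint $1+\hat t_2=c(1+\hat t_1)$ then becomes the additive translation $\hat\tau_2=\hat\tau_1+\log c$, while the interval $[t(p_1),t(p_2))$ is carried onto $[\tau_1,\tau_2)$ with $\tau_i:=(1-p_i)|\log\rad(\B)|$, whose length is $L:=(p_1-p_2)|\log\rad(\B)|$. The problem reduces to finding $\hat\tau_1\in[\tau_1,\tau_2-\log c]$ such that (i) the sub-interval $[\hat\tau_1,\hat\tau_1+\log c)$ contains no image $\log(1+T_k)$ of a merging time, and (ii) the gradient energy of $w$ on the corresponding annular region $U(\hat t_2)\setminus\overline U(\hat t_1)$ satisfies the claimed bound. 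Both objectives will be met by combining a Fubini computation with a pigeonhole argument.

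For the energy control, for each $x\in\Omega(\B)$ I would introduce the entry time
$$\tau_x:=\log\bigl(1+\inf\{t\ge 0:x\in U(t)\}\bigr),$$
which is strictly positive because $x\notin\overline U(0)$. Then $x$ belongs to $U(\hat t_2)\setminus\overline U(\hat t_1)$ if and only if $\hat\tau_1<\tau_x\le\hat\tau_1+\log c$. Setting
$$g(\tau):=\int_{\Omega\cap(U(e^{\tau+\log c}-1)\setminus\overline U(e^\tau-1))}|\nabla w|^2\ud x,$$
Fubini's theorem yields
$$\int_{\tau_1}^{\tau_2-\log c}g(\tau)\ud\tau=\int_{\Omega(\B)}|\nabla w(x)|^2\,\bigl|\{\tau\in[\tau_1,\tau_2-\log c]:\tau<\tau_x\le\tau+\log c\}\bigr|\ud x\le\log c\int_{\Omega(\B)}|\nabla w|^2\ud x,$$
because the $\tau$-measure inside the bracket is at most $\log c$ for every fixed $x$.

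To exclude mergings, let $\mathcal M:=\{\tau\in[\tau_1,\tau_2-\log c]:\log(1+T_k)\in[\tau,\tau+\log c)\text{ for some }k\}$; this is a union of at most $K$ intervals of length $\log c$, where $K$ denotes the number of merging times of the ball construction. Since in the relevant setting each ball of $\B$ contains an atom of $\mu$ with $|\mu(B)|\ge 1$, one has $|\B|\le|\mu|(\Omega)$ and therefore $K\le|\mu|(\Omega)$. Hence the good set $\mathcal G:=[\tau_1,\tau_2-\log c]\setminus\mathcal M$ has Lebesgue measure at least $L-(|\mu|(\Omega)+1)\log c$, strictly positive by the hypothesis on $c$. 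Averaging the Fubini bound over $\mathcal G$ produces some $\hat\tau_1\in\mathcal G$ with
$$g(\hat\tau_1)\le\frac{\log c\int_{\Omega(\B)}|\nabla w|^2\ud x}{L-(|\mu|(\Omega)+1)\log c},$$
and the pair $\hat t_1:=e^{\hat\tau_1}-1$, $\hat t_2:=e^{\hat\tau_1+\log c}-1$ satisfies all the required properties.

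The only delicate point is the bound $K\le|\mu|(\Omega)$ on the number of merging times, which is precisely what dictates the form of the denominator in the conclusion and of the admissible range of $c$. Everything else is a clean Fubini-plus-pigeonhole, whose essential ingredient is the passage to logarithmic time that converts the multiplicative relation $1+\hat t_2=c(1+\hat t_1)$ into the additive translation $\hat\tau_2=\hat\tau_1+\log c$.
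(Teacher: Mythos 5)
Your proof is correct and takes essentially the same approach as the paper's: a pigeonhole argument in logarithmic time $\tau=\log(1+t)$, where the number of bad (merging-containing) intervals of length $\log c$ is controlled via the number of merging times $K\le|\mu|(\Omega)$. The only difference is the implementation: the paper discretizes $[t(p_1),t(p_2))$ into $J=\lfloor(p_1-p_2)|\log\rad(\B)|/\log c\rfloor$ geometric subintervals of fixed ratio $c$ and applies the mean-value theorem to the subfamily avoiding all mergings, whereas you average continuously via Fubini over all admissible starting times $\hat\tau_1$ --- yielding the identical bound while trading away the measurability-free convenience of the discrete partition.
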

\begin{proof}
We set $J:=\lfloor (p_1-p_2)\frac{|\log\rad(\B)|}{\log c}\rfloor$ and $\hat t^j:=c^j(1+t(p_1))-1$ for every $j=0,1,\ldots,J$.
Note that  $\frac{1+\hat t^{j+1}}{1+\hat t^j}=c$ for every $j=0,1,\ldots, J-1$\,.
We let $\mathcal{J}$ denote the set of indices in $\{0,1,\ldots,J-1\}$ for which no merging occurs in the interval $[\hat t^j,\hat t^{j+1})$\,. Since the number of merging times is bounded from above by $|\mu|(\Omega)$, we have
 $\sharp \mathcal J\ge J-|\mu|(\Omega)\ge (p_1-p_2)\frac{|\log\rad(\B)|}{\log c}-|\mu|(\Omega)-1$\,.
 For every $j\in\mathcal J$ and for every ball $B(\hat t^j)\in \B(\hat t^j)$,  $B(\hat t^{j+1})$ denotes the unique ball in $\B(\hat t^{j+1})$ such that $B(\hat t^j)\subset B(\hat t^{j+1})$.
By the mean-value theorem, there exists $k\in\mathcal J$ such that
 \begin{eqnarray*}
 \int_{\Omega(\B)}|\nabla w|^2\ud x&\ge& \sum_{j\in\mathcal J}\sum_{B(\hat t^j)\in \B(\hat t^j)}\int_{\Omega\cap(B(\hat t^{j+1})\setminus \bar B(\hat t^{j}))}|\nabla w|^2\ud x\\
 &\ge &\sharp \mathcal J \int_{\Omega\cap(U(\hat t^{k+1})\setminus \bar U(\hat t^{k}))}|\nabla w_\ep|^2\ud x\\
 &\ge & \Big((p_1-p_2)\frac{|\log\rad(\B)|}{\log c}-|\mu|(\Omega)-1\Big)\int_{\Omega\cap(U(\hat t^{k+1})\setminus \bar U(\hat t^{k}))}|\nabla w|^2\ud x\,,
 \end{eqnarray*}
 from which the claim follows setting $\hat t_{1}:=\hat t^{k}$ and $\hat t_{2}:=\hat t^{k+1}$\,.
\end{proof}
As for the case $\delta_\ep\gg\ep$\,, we restrict our analysis to functionals of the form \eqref{aform}. In such a case the main result is the following.
\begin{proposition}\label{mainthmcranewdelta>ep}
Let $\precrw,\,\precr$ be defined in \eqref{aform}, \eqref{cra_new}, respectively, where $a$ is a measurable $(0,1)^2$-periodic function satisfying $a(x)\in [\alpha,\beta]\subset (0,+\infty)$ for a.e. $x\in\R^2$\,.
Let $\{\mu_\ep\}_{\ep}\subset X(\Omega)$ be such that
\begin{equation}\label{tvbound02}
|\mu_\ep|(\Omega)\le C|\log\ep|
\end{equation}
and $\mu_\ep\fla\mu$ for some $\mu\in X(\Omega)$\,. For every $\ep>0$ let $\B_\ep$ be a finite family of pairwise disjoint open balls such that $\supp\mu_\ep\subset \bigcup_{B\in\B_\ep} B$ and
\begin{equation}\label{radbound02}
\rad(\B_\ep)\le C\ep|\log\ep|.
\end{equation}
If \eqref{limlog} is satisfied, then
\begin{equation*}
\liminf_{\ep\to 0}\frac{1}{|\log\ep|}\precr(\mu_\ep,\B_\ep)\ge2\pi\bigg(\Big(1-\lambda)\mathrm{ess}\inf a+ \lambda\sqrt{\det\ho}\bigg)|\mu|(\Omega)\,,
\end{equation*}
where $\ho$ is defined in \eqref{defahom}.
\end{proposition}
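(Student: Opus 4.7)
The strategy is to adapt the proof of Proposition \ref{mainthmcranew} to the regime $\delta_\ep\gg\ep$: since here $\delta_\ep$ behaves like $\ep^\lambda$ with $\lambda\in[0,1)$, the scale $\ep^\lambda$ separates a \emph{homogenization regime} at scales $\ep^q$ with $q<\lambda$ (on which Proposition \ref{homdeg} applies and yields the factor $\sqrt{\det\ho}$) from a \emph{scalar regime} at scales $\ep^q$ with $q>\lambda$ (on which $a(\cdot/\delta_\ep)$ oscillates much more slowly than the annulus and can only be bounded from below by $\mathrm{ess\,inf}\,a$). Summing the two contributions will yield the $\lambda,\,1-\lambda$ convex combination in the statement.

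I first repeat verbatim the first portion of the proof of Proposition \ref{mainthmcranew}: localize to $\mu=z_0\delta_{x_0}$, pick $w_\ep$ nearly minimizing $\precrw(\cdot;\Omega(\B_\ep))$ on $\AF(\mu_\ep,\B_\ep)$, apply the ball construction of Section \ref{bc:section} up to $t_\ep(p)=\rad(\B_\ep)^{p-1}-1$ to produce $\mu_\ep(p)\weakstar z_0\delta_{x_0}$, and invoke Lemma \ref{lemmaus} to extract $\hat t_{\ep,1}<\hat t_{\ep,2}$ with no merging in between and a uniformly bounded Dirichlet integral on $U_\ep(\hat t_{\ep,2})\setminus U_\ep(\hat t_{\ep,1})$. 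This permits extending $w_\ep$ across the degree-zero balls in $\Ccal^{=0}_\ep(\hat t_{\ep,1})$ as in \eqref{constrw}--\eqref{tozero}, producing $\hat w_\ep$. Passing to a subsequence the nonzero-degree centers converge to a finite set $\{x_0,\xi_1,\dots,\xi_{L'}\}$, and the restriction $\tilde\mu_\ep$ of $\mu_\ep(\hat t_{\ep,1})$ to $B_\rho(x_0)$ satisfies $\tilde\mu_\ep\weakstar z_0\delta_{x_0}$ and $\tilde\mu_\ep(B_{2\rho}(x_0))=z_0$ for small $\ep$.

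The key new ingredient is the scale decomposition. As in the previous proof I analyze the step function $g_\ep(q)$ counting connected components of $\bigcup_l B_{\ep^q}(x_\ep^l)$ and extract a limiting partition $\triangle=\{q^1<\dots<q^{\tilde L}\}\subset[\bar p,p']$ with $q^1=\bar p$ and $q^{\tilde L}=p'$. Assuming without loss of generality $\bar p<\lambda<p'$, I insert $\lambda$ as an additional breakpoint $q^{i_0}=\lambda$ (since $g_\ep$ is locally constant around it, this insertion does not disturb the subsequence extraction of $M^i$). For each $i$ and $m=1,\dots,M^i$ I form the annulus $C_\ep^{i,m}=B_{\ep^{q^i+\eta}}(y_\ep^m)\setminus\overline{B}_{\ep^{q^{i+1}-\eta}}(y_\ep^m)$ of net degree $z_{i,m}$, with $\sum_m z_{i,m}=z_0$. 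For $i<i_0$ we have $q^{i+1}-\eta<\lambda$, so $\delta_\ep/\ep^{q^{i+1}-\eta}\to 0$, and Proposition \ref{homdeg} together with Remark \ref{homdegrmk} yield
\begin{equation*}
\frac{1}{|\log\ep|}\int_{C_\ep^{i,m}} a\Bigl(\tfrac{x}{\delta_\ep}\Bigr)|\nabla\hat w_\ep|^2\ud x \ge (q^{i+1}-q^i-2\eta)\,\psi(z_{i,m};Tf_{\hom})-\omega(\ep);
\end{equation*}
for $i\ge i_0$, i.e.\ $q^i+\eta>\lambda$, combining $a(\cdot/\delta_\ep)\ge\mathrm{ess\,inf}\,a$ with the classical annular lower bound $\int_{A_{r,R}}|\nabla w|^2\ud x\ge 2\pi z^2\log(R/r)$ for $H^1(A_{r,R};\mathcal S^1)$ maps of degree $z$ gives
\begin{equation*}
\frac{1}{|\log\ep|}\int_{C_\ep^{i,m}} a\Bigl(\tfrac{x}{\delta_\ep}\Bigr)|\nabla\hat w_\ep|^2\ud x \ge 2\pi\,\mathrm{ess\,inf}\,a\,\,|z_{i,m}|^2\,(q^{i+1}-q^i-2\eta).
\end{equation*}
Summing in $m$ via the subadditivity estimates $\sum_m\psi(z_{i,m};Tf_{\hom})\ge\Psi(z_0;Tf_{\hom})=2\pi\sqrt{\det\ho}\,|z_0|$ (Proposition \ref{prop:psisimplf}) for $i<i_0$, and $\sum_m |z_{i,m}|^2\ge\bigl|\sum_m z_{i,m}\bigr|=|z_0|$ for $i\ge i_0$, then summing in $i$ (the large-scale indices cover total $q$-length $q^{i_0}-q^1=\lambda-\bar p$ and the small-scale ones cover $q^{\tilde L}-q^{i_0}=p'-\lambda$), and finally letting $\ep\to 0$, $\eta\to 0$, $\bar p\to 0$, $p'\to 1$ yields the claim.

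The main obstacle is the careful bookkeeping: verifying that the insertion of $\lambda$ as an artificial breakpoint is harmless at the subsequence level, and ensuring that no energy contribution is over-counted on annuli straddling the transition scale (handled by keeping the margin $\eta>0$ throughout and sending it to $0$ only at the end). The remaining steps parallel those of Proposition \ref{mainthmcranew}.
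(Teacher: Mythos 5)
Your proposal is correct and follows essentially the same route as the paper's proof: both repeat the localization, ball-construction, and extension argument of Proposition~\ref{mainthmcranew}, and the key new step in both is to split the dyadic scale decomposition $[\bar p,p']$ at the exponent $\lambda$, applying Proposition~\ref{homdeg} together with Proposition~\ref{prop:psisimplf} on the annuli with exponents below $\lambda$ (where $\delta_\ep/\ep^{s_2}\to 0$) and the elementary bound $a\ge\mathrm{ess\,inf}\,a$ plus the Dirichlet degree inequality on those with exponents above $\lambda$. The only cosmetic difference is that you insert $\lambda$ as an artificial breakpoint of a single limiting partition $\triangle$, whereas the paper extracts two separate limit partitions $\triangle_1\subset[\bar p,\lambda]$ and $\triangle_2\subset[\lambda,p']$ and normalizes so that $\lambda$ is the common endpoint; these are interchangeable, and your remark that the insertion is harmless because $g_\ep$ is locally constant near $\lambda$ is the right justification.
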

\begin{proof}
The proof closely resembles the one of Proposition \ref{mainthmcranew}; here we only highlight the main changes that are needed to prove the different lower bound in the regime \eqref{limlog0}.

Let $w_\ep\in \AF_\ep(\mu_\ep,\Omega(\B_\ep))$ be such that
\begin{equation}\label{infmin0000}
\precrw(w_\ep; \Omega(\B_\ep))\le \precr(\mu_\ep,\B_\ep)+C
\end{equation}
for some constant $C$ independent of $\ep$.
By a standard localization argument in $\Gamma$-convergence, we can assume that $\mu=z_{0}\delta_{x_{0}}$ for some $z_{0}\in\Z\setminus\{0\}$ and $x_{0}\in\Omega$\,.

For every $\ep>0$, let $\B_{\ep}(t)$ be a time-parametrized family of balls introduced as in Proposition \ref{ballconstr}, starting from $\B_\ep=:\B_\ep(0)$\,.
For every $t\ge 0$, we set $\sr_\ep(t):=\rad(\B_\ep(t))$\,, $\Ccal_\ep(t):= \{ B\in\B_\ep(t)\,: \, B\subset \Om\}$ and $U_\ep(t):=\bigcup_{B\in\B_\ep(t)}B$\,.
For every $0<p<1$ we set
\begin{equation*}
t_\ep(p):=\frac{1}{\sr_\ep^{1-p}(0)} - 1 \quad\text{ and }\quad
\mu_\ep(p):= \sum_{B\in \Ccal_\ep(t_\ep(p))}  \mu_\ep(B) \delta_{x_B}.
\end{equation*}
Fix $\lambda<p<1$\,. By arguing as in the proof of \eqref{tvb} and \eqref{flastar}, we have that
\begin{equation}\label{tvb2}
|\mu_\ep(p)|(\Omega)\le C_{p}\quad\textrm{ and }\quad \mu_{\ep}(p)\weakstar\mu=z_0\delta_{x_0}\quad\textrm{as }\ep\to 0\,.
\end{equation}
Following the reasoning in the  proof of Proposition \ref{mainthmcranew} we have that for every $0<\eta<p-\lambda$ there exists $t_\ep(p)\le \hat t_{\ep,1}\le t_\ep(p-\eta)$ and a  a map $\hat w_\ep\in H^1(\Omega(\B_\ep)\cup\bigcup_{B\in\Ccal_{\ep}^{=0}(\hat t_{\ep,1})};\mathcal S^1)$ (with $\Ccal_{\ep}^{=0}(\hat t_{\ep,1})$ defined in \eqref{cccal}) satisfying \eqref{tozero}.

Recalling the definition of $\Ccal^{\neq 0}_{\ep}(\hat t_{\ep,1})$ in \eqref{cccal}, we set
$\mu(\hat t_{\ep,1}):=\sum_{B\in\Ccal^{\neq 0}_{\ep}(\hat t_{\ep,1})}\mu_\ep(B)\delta_{x_B}$\,. In view of the ball construction in Section \ref{bc:section} and of \eqref{tvb2}, we have that $\sharp \Ccal_\ep^{\neq 0}(\hat t_{\ep,1})\le C_{p}$.
Therefore, up to extracting a subsequence we may assume that $\sharp \Ccal_\ep^{\neq 0}(\hat t_{\ep,1})=L$ for every $\ep>0$ and for some $L\in\N$\,.
For every $l=1,\ldots, L$, let $x_\ep^l$ be the center of the $l$-th ball $B_\ep^l$ in  $\Ccal_\ep^{\neq 0}(\hat t_{\ep,1})$\,.
Up to a further subsequence, we can assume that the points $x^l_\ep$ converge to some points in the finite set $\{x_{0}=\xi_0,\xi_{1},\ldots,\xi_{{L'}}\}\subset\bar\Omega$, where $L'\le L$\,. Let $\rho>0$ be such that $B_{2\rho}(x^0)\sub\sub\Omega$ and $B_{2\rho}(\xi_{j})\cap B_{2\rho}(\xi_{k})=\emptyset$ for all $j\neq k$\,. Then $x_\ep^l\in B_{\rho}(\xi_{j})$ for some $j=1,\ldots,L'$ and for $\ep$ small enough.
Setting
$$
\tilde\mu_\ep:=\sum_{x_{\ep}^l\in B_\rho(x_{0})}\mu_\ep(B^l_\ep)\delta_{x_\ep^l}\,,
$$
by construction,  we have that
\begin{equation}\label{befdsum2}
|\tilde\mu_\ep|(\Omega)\le |\mu_\ep(p)|(\Omega)\quad\text{and }\quad\|\tilde\mu_\ep-\mu_\ep(p)\|_{\flap}\to 0\,,
\end{equation}
which, in view of \eqref{tvb2}, implies that, up to a subsequence,
\begin{equation*}
\tilde\mu_\ep\weakstar\mu=z_{0}\delta_{x_{0}}\,.
\end{equation*}
Therefore,  for sufficiently small $\ep$,
\begin{equation}\label{dsum22}
\tilde\mu_\ep(B_{2\rho}(x_{0}))=\sum_{x_{\ep}^l\in B_\rho(x_{0})}\mu_\ep(B^l_\ep)=z_{0}\,,
\end{equation}
and, by arguing as in the proof of \eqref{almostfin}, we obtain
\begin{equation}\label{almostfindopo}
\begin{aligned}
\precrw(w_\ep;\Omega(\B_\ep))\ge \int_{\Omega(\B_\ep)\cap B_{2\rho}(x_{0})}a\Big(\frac{x}{\delta_\ep}\Big)|\nabla \hat w_\ep|^2\ud x-C\,.
\end{aligned}
\end{equation}
It remains to prove the lower bound for the right-hand side of $\eqref{almostfindopo}$\,. To this end, we take $\lambda<p'<p$ such that
$\rad(\hat t_{\ep,1})\le \ep^{p'}$ (note that such $p'$ always exists since $\sr_\ep(\hat t_{\ep,1})< \sr_\ep^{p-\eta}(0)\le C\ep^{ p -\eta}|\log\ep|^{p -\eta}$ for every $\eta<p-\lambda$), choose $0<\bar p<\lambda<p'$\,  and  let $g_\ep:[\bar p,p']\to\{1,\ldots,L\}$ denote the function which associates to any $q\in [\bar p,p']$ the number $g_\ep(q)$ of connected components of the set $\bigcup_{l=1}^{L}B_{\ep^q}(x_\ep^l)$\,. For every $\ep>0$\,, the function $g_\ep$ is monotonically non decreasing so that it can have at most $\hat L\leq L$ discontinuities.
Let $q_\ep^j$ for $j=1,\ldots,\hat L_1$ and   $\kappa_\ep^j$ for $j=1,\ldots,\hat L_2$ denote the discontinuity points of $g_{\e}$ in $[\bar p, \lambda]$ and in in $[\lambda,p']$, respectively. Assume that
$$
\bar p\le q_\ep^1<\ldots<q_\ep^{\hat L_1}\le \lambda\le \kappa_\ep^1\le\ldots\le \kappa_\ep^{\hat L_2}\le p'\,.
$$
There exists a finite set $\triangle_1=\{q^1,\ldots,q ^{\tilde L_1}\}$ (resp., $\triangle_2=\{\kappa^1,\ldots,\kappa ^{\tilde L_2}\}$)
with $q^i<q^{i+1}$ (resp., $\kappa^i<\kappa^{i+1}$), and $\tilde L_1\le \hat L_1$ (resp., $\tilde L_2\le \hat L_2$) such that, up to a subsequence, $\{q_\ep^j\}_{\ep}$ converges to some point in $\triangle_1$\,, as $\ep\to 0$ for every $j=1,\ldots,\hat L_1$ (resp., $\{\kappa_\ep^j\}_{\ep}$ converges to some point in $\triangle_2$\,, as $\ep\to 0$ for every $j=1,\ldots,\hat L_2$). Without loss of generality we may assume that $q^1=\bar p$\,, $q^{\tilde L_1}=\lambda=\kappa^1$\,, and $\kappa^{\tilde L_2}=p'$\,.
Let $\eta>0$ be such that $4\eta<\min \{q^{i+1}-q^{i}:\, i\in\{1,\dots,\tilde L_1\}\}$
and  $4\eta<\min \{\kappa^{i+1}-\kappa^{i}:\, i\in\{1,\dots,\tilde L_2\}\}$
and let $\ep$ be so small that for every $j=1,\ldots, \hat L_1$\,, $|q_\ep^j-q^i|<\eta$ for some $q^i\in\triangle_1$ and for every $j=1,\ldots, \hat L_2$\,, $|\kappa_\ep^j-\kappa^i|<\eta$ for some $s^i\in\triangle_2$\,. Then the function $g_\ep$ is constant in the intervals $[q^i+\eta, q^{i+1}-\eta]$ and in the intervals $[\kappa^i+\eta, \kappa^{i+1}-\eta]$ , and in both cases we let its value be denoted by $M_\ep^i$. For every $i=1,\ldots,\tilde L_1-1$ (resp.,  $i=1,\ldots,\tilde L_2-1$) we construct a family of $M_\ep^i\le \tilde L_1-1$ (resp., $M_\ep^i\le \tilde L_2-1$) annuli that are denoted by $C_\ep^{i,m}:=B_{\ep^{q^i+\eta}}(y_\ep^m)\setminus \overline{B}_{\ep^{q^{i+1}-\eta}}(y_\ep^m)$
(resp., $C_\ep^{i,m}:=B_{\ep^{\kappa^i+\eta}}(y_\ep^m)\setminus \overline{B}_{\ep^{\kappa^{i+1}-\eta}}(y_\ep^m)$)
with $y_\ep^m\in B_{\rho}(x_{0})$ and  $m=1,\ldots,M_\ep^i$. The annuli $C_\ep^{i,m}$ can be taken pairwise disjoint for all $i$ and $m$ and such that
$$
\bigcup_{x_\ep^l\in B_{\rho}(x_{0})} B_\ep^l\subset\bigcup_{m=1}^{M_\ep^i}B_{\ep^{q^{i+1}-\eta}}(y_\ep^m)\qquad\textrm{and}\qquad \bigcup_{x_\ep^l\in B_{\rho}(x_{0})} B_\ep^l\subset\bigcup_{m=1}^{M_\ep^i}B_{\ep^{\kappa^{i+1}-\eta}}(y_\ep^m)\,.
$$
Note that, for $\ep$ small enough, $C_\ep^{i,m}\subset B_{2\rho}(x_{0})$ for all $i$ and $m$ and, by \eqref{befdsum2}\,, we get
\begin{equation*}
|\mu_\ep (B_{\ep^{q^{i+1}-\eta}}(y_\ep^m))|\le C\qquad\textrm{and}\qquad|\mu_\ep (B_{\ep^{\kappa^{i+1}-\eta}}(y_\ep^m))|\le C\,.
\end{equation*}
Therefore, up to passing to a further subsequence, we can assume that $M_\ep^i=M^i$ and that $\mu_\ep (B_{\ep^{q^{i+1}-\eta}}(y_\ep^m))=z_{i,m}\in\Z\setminus\{0\}$ and $\mu_\ep (B_{\ep^{\kappa^{i+1}-\eta}}(y_\ep^m))=z_{i,m}\in\Z\setminus\{0\}$\,, with $M^i$ and $z_{i,m}$ independent of $\ep$\,. Finally, in view of \eqref{dsum22}, we have that
\begin{equation}\label{dsum42}
\sum_{m=1}^{M^i} z_{i,m}= z_{0}\,.
\end{equation}
For every $i=1,\ldots,\tilde L_2-1$ and for every $m=1,\ldots,M^i$ we have that
\begin{equation*}
\begin{aligned}
\int_{C_\ep^{i,m}}a\Big(\frac{x}{\delta_\ep}\Big)|\nabla \hat w_\ep|^2\ud x\ge&\ \mathrm{ess}\inf a\int_{C_\ep^{i,m}}\frac {1}{|x-y_\ep^m|^2}\ud x\\
\ge &\  2\pi \mathrm{ess}\inf a \,(\kappa^{i+1}-\kappa^i-2\eta)|\log\ep||z_{i,m}|^2\\
\ge &\  2\pi \mathrm{ess}\inf a\, (\kappa^{i+1}-\kappa^i-2\eta)|\log\ep||z_{i,m}|
\end{aligned}
\end{equation*}
which, summing over $m$ and over $i$, dividing by $|\log\ep|$ and using \eqref{dsum42}, yields
\begin{equation}\label{nuo1}
\begin{aligned}
\frac{1}{|\log\ep|}\sum_{i=1}^{\tilde L_2-1}\sum_{m=1}^{M^i}\int_{C_\ep^{i,m}}a\Big(\frac{x}{\delta_\ep}\Big)|\nabla \hat w_\ep|^2\ud x\ge&\  2\pi \,\mathrm{ess}\inf a\sum_{i=1}^{\tilde L_2-1}(\kappa^{i+1}-\kappa^i-2\eta)|z_0|\\
\ge&\ 2\pi\, \mathrm{ess}\inf a\,(\kappa^{\tilde L_2}-\kappa^{1}-2\eta\tilde L_2)|z_0|\\
=&\  2\pi \,\mathrm{ess}\inf a\,(p'-\lambda-2\eta\tilde L_2)|z_0|\,.
\end{aligned}
\end{equation}
Moreover, since $q^i+\eta<q^{\tilde L_1-1}+\eta<\lambda$ for every $i=1,\ldots,\tilde L_0-1$\,, by \eqref{limlog0}, we have that $\lim_{\ep\to 0}\frac{\delta_\ep}{\ep^{q^i+\eta}}=0$ for every $i=1,\ldots,\tilde L_1-1$\,.
Therefore, we can apply Proposition \ref{homdeg}  
with $s_{1}=q^{i}+\eta<q^{i+1}-\eta=s_2$ (see also Remark \ref{homdegrmk}) and Proposition \ref{prop:psisimplf}
 to get that for every $i$ and $m$ there exists a modulus of continuity $\omega$ such that
\begin{equation*}
\begin{aligned}
\frac{1}{|\log\ep|}\int_{C_\ep^{i,m}}a\Big(\frac{x}{\delta_\ep}\Big)|\nabla \hat w_\ep|^2\ud x\ge&\ 2\pi(q^{i+1}-q^i-2\eta)\sqrt{\det\ho}|z_{i,m}|^2-\omega(\ep)\\
\ge&\ 2\pi(q^{i+1}-q^i-2\eta)\sqrt{\det\ho}|z_{i,m}|^2-\omega(\ep)\,.
\end{aligned}
\end{equation*}
Summing the previous inequality over $m$ and $i$ and using \eqref{dsum42} yields
\begin{equation}\label{nuo2}
\begin{aligned}
\frac{1}{|\log\ep|}\sum_{i=1}^{\tilde L_1-1}\sum_{m=1}^{M^i}\int_{C_\ep^{i,m}}a\Big(\frac{x}{\delta_\ep}\Big)|\nabla \hat w_\ep|^2\ud x\ge&\
 2\pi\sqrt{\det\ho}(q^{\tilde L_1}-\bar p-2\eta\tilde L_1) |z_0|-\omega(\ep)\\
= &\ 2\pi\sqrt{\det\ho}(\lambda-\bar p- 2\eta\tilde L_1)|z_0|-\omega(\ep)\,.
\end{aligned}
\end{equation}
By \eqref{almostfindopo}, summing \eqref{nuo1} and \eqref{nuo2}, the claim follows taking the limits as $\ep\to 0$\,, $\eta\to 0$, $\bar p\to 0$ and $p,p'\to 1$ and using \eqref{infmin0000}.
\end{proof}
\section{The case $\delta_\ep\lesssim\ep$}\label{sec:delta<ep}
This section is devoted to the proofs of Theorems \ref{mainthmcra} and \ref{intro: mainthmgl}.

\subsection{The core-radius approach}\label{sec:mainthmcra}
For the reader's convenience, we  re-state Theorem \ref{mainthmcra} and  recall that $X_\ep(\Omega)$ is defined in \eqref{measep}.
\begin{theorem}\label{mainthmcradopo}
Let $\precrw,\,\precr$ be defined in \eqref{intro:cra_en_0}, \eqref{intro:cra_en}, respectively, with $f$ satisfying \eqref{Pprop}, \eqref{Gprop}, \eqref{Hprop}. Let moreover $\Fzero$ be defined by formula \eqref{gali} with $\Psi(\cdot;T\!f_{\hom})$ given by \eqref{defPsi} and, in the latter formula,  $h=T\!f_{\hom}$.
 If $\limsup_{\ep\to 0}\frac{\delta_\ep}{\ep}< +\infty$,
then the following statements hold true.
\begin{itemize}
\item[(i)] (\,$\Gamma$-liminf inequality)
For any $\left\{\mu_\ep\right\}_\ep\subset X(\Omega)$ such that $\mu_\ep\in X_\ep(\Omega)$ for each $\ep>0$ and
$\mu_\ep\fla\mu$ with $\mu\in X(\Omega)$\, the following inequality holds:
\begin{equation*}
\liminf_{\ep\to 0}\frac{\precr(\mu_\ep)}{|\log\ep|}\ge\Fzero(\mu).
\end{equation*}
\item[(ii)](\,$\Gamma$-limsup inequality) For every $\mu\in X(\Omega)$, there exists a sequence
$\left\{\mu_\ep\right\}_\ep\subset X(\Omega)$ with $\mu_\ep\in X_\ep(\Omega)$ for every $\ep>0$
such that  $\mu_\ep\fla\mu$ and
\begin{equation*}
\limsup_{\ep\to 0}\frac{\precr(\mu_\ep)}{|\log\ep|}\le\Fzero(\mu)\,.
\end{equation*}
\end{itemize}
\end{theorem}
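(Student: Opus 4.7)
The plan is to dispatch the two parts separately: (i) follows from the general lower bound Proposition~\ref{mainthmcranew} once the natural family of balls is introduced, while (ii) is constructive, combining the cell-formula Proposition~\ref{homdeg} with the fixed-trace Lemma~\ref{lm:glim0} through a carefully staged multi-scale recovery sequence.

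For (i), given $\mu_\ep\in X_\ep(\Omega)$ with $\mu_\ep\fla\mu$, the plan is to introduce $\B_\ep:=\{B_\ep(x):x\in\supp\mu_\ep\}$. The definition of $X_\ep(\Omega)$ in \eqref{measep} ensures these balls are pairwise disjoint and contained in $\Omega$, so $\precr(\mu_\ep)=\precr(\mu_\ep,\B_\ep)$. Assuming without loss of generality that $\precr(\mu_\ep)\le C|\log\ep|$ (else the $\liminf$ is trivially $+\infty$), standard ball-construction arguments applied to a near-minimizer via Proposition~\ref{ballconstr} and \eqref{impbound} yield $|\mu_\ep|(\Omega)\le C|\log\ep|$; since $\mu_\ep\in X_\ep(\Omega)$, this in turn implies $\rad(\B_\ep)=\#\supp(\mu_\ep)\cdot\ep\le |\mu_\ep|(\Omega)\cdot\ep\le C\ep|\log\ep|$. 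Both hypotheses \eqref{tvbound0}--\eqref{radbound0} of Proposition~\ref{mainthmcranew} are thus met, and its conclusion is exactly (i).

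For (ii), by a standard localization argument in $\Gamma$-convergence it suffices to construct a recovery sequence for $\mu=z_0\delta_{x_0}$. Given $\eta>0$, select an integer decomposition $z_0=\sum_{j=1}^J z_j$ with $\sum_{j}\psi(z_j;T\!f_{\hom})\le\Psi(z_0;T\!f_{\hom})+\eta$ by \eqref{defPsi}. Fix $\gamma\in(0,1)$ small, $s_2\in(\gamma,1)$ close to $1$, and $\rho>0$ with $\overline B_\rho(x_0)\subset\Omega$. Setting $\sigma_\ep:=\ep^\gamma$, place the vortex centers at $x_j^\ep:=x_0+2\sigma_\ep e_j$ where $e_1,\dots,e_J$ are unit vectors in general position, so that $\mu_\ep:=\sum_j z_j\delta_{x_j^\ep}\in X_\ep(\Omega)$ for small $\ep$ and $\mu_\ep\weakstar\mu$, hence $\mu_\ep\fla\mu$. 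The competitor $w_\ep\in\AS_\ep(\mu_\ep)$ is then defined piecewise on five regions, using the fixed-trace class $\widetilde\Anew$ of \eqref{anew} to guarantee matching across interfaces: (a) on $\Omega\setminus B_\rho(x_0)$, set $w_\ep(x):=((x-x_0)/|x-x_0|)^{z_0}$; (b) on $A_{2\sigma_\ep,\rho}(x_0)$, pick a near-optimal competitor in $\widetilde\Anew_{2\sigma_\ep,\rho}(z_0)$; (c) on each $A_{\ep^{s_2},\sigma_\ep/(2J)}(x_j^\ep)$, a near-optimal competitor in $\widetilde\Anew_{\ep^{s_2},\sigma_\ep/(2J)}(z_j)$; (d) on each tiny core $A_{\ep,\ep^{s_2}}(x_j^\ep)$, use the explicit map $((x-x_j^\ep)/|x-x_j^\ep|)^{z_j}$; and (e) on the leftover region $B_{2\sigma_\ep}(x_0)\setminus\bigcup_j\overline B_{\sigma_\ep/(2J)}(x_j^\ep)$, any $H^1$ extension compatible with the prescribed traces---after the rescaling $y=(x-x_0)/\sigma_\ep$ this becomes a fixed model domain with smooth $\mathcal S^1$-valued boundary data, so the contribution is $O(1)$.

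Combining the energy estimates via Proposition~\ref{homdeg} (whose hypothesis $\delta_\ep/\ep^{s_2}\to 0$ follows from $\delta_\ep\lesssim\ep$ and $s_2<1$), Lemma~\ref{lm:glim0} (to transfer the optimization from $\Anew$ to $\widetilde\Anew$ at a cost of $O(z^2)$), the pointwise bound $\int|x-x_j^\ep|^{-2}\,dx$ for (d), and the dilation argument for (a) and (e), one obtains
\begin{equation*}
\limsup_{\ep\to 0}\frac{\precr(\mu_\ep)}{|\log\ep|}\le \gamma\,\psi(z_0;T\!f_{\hom})+(s_2-\gamma)\big(\Psi(z_0;T\!f_{\hom})+\eta\big)+2\pi\beta(1-s_2)\sum_{j=1}^J z_j^2.
\end{equation*}
Letting $s_2\to 1$, then $\gamma\to 0$, then $\eta\to 0$, and diagonalizing in $\ep$ produces a single recovery sequence achieving $\limsup\le \Psi(z_0;T\!f_{\hom})=\Fzero(\mu)$. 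The general case $\mu=\sum_i z_i\delta_{x_i}$ is handled by running this construction on pairwise disjoint balls $B_\rho(x_i)$ and using the product $\prod_i((x-x_i)/|x-x_i|)^{z_i}$ as the outer map. The main technical obstacle is the trace-matching across the five regions together with the handling of step (e), where both the fixed-trace property of $\widetilde\Anew$ and the scale-invariance under dilation by $\sigma_\ep$ are essential; a secondary challenge is organizing the diagonal extraction so that the parameters $\gamma,\,s_2,\,\eta$ (and the $\ep$-dependence of the near-optimal $\widetilde\Anew$-competitors produced by Proposition~\ref{homdeg}) all contribute a vanishing error in the final $\limsup$.
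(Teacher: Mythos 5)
Your proof of part (i) is exactly the paper's argument: introduce $\B_\ep=\{B_\ep(x):x\in\supp\mu_\ep\}$, use a near-minimizer together with the lower bound $\E(\B_\ep,\mu_\ep,\Omega)\ge 2\pi\alpha\log 2\,|\mu_\ep|(\Omega)$ (a consequence of \eqref{impbound} and Jensen) to obtain $|\mu_\ep|(\Omega)\le C|\log\ep|$, deduce $\rad(\B_\ep)\le \ep|\mu_\ep|(\Omega)\le C\ep|\log\ep|$, and feed these bounds into Proposition~\ref{mainthmcranew}.

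For part (ii), however, you take a genuinely different route. The paper observes that the functional $\widetilde{\mathscr{F}}_0(\mu):=\sum_i\psi(z_i;T\!f_{\hom})$ has $\Fzero$ as its lower semicontinuous envelope with respect to flat convergence (this follows from Remark~\ref{propePsi} and the definition \eqref{defPsi}), and since the $\Gamma$-$\limsup$ of any sequence of functionals is automatically lower semicontinuous, it suffices to produce a recovery sequence attaining $\widetilde{\mathscr{F}}_0(\mu)$. This is done with $\mu_\ep\equiv\mu$ and a three-layer competitor \eqref{reccra}: explicit winding on $A_{\ep,\ep^s}(x_i)$, a near-optimal $\widetilde\Anew_{\ep^s,2\bar\rho}(z_i)$-competitor on $A_{\ep^s,\bar\rho}(x_i)$, a linear interpolation on $A_{\bar\rho,2\bar\rho}(x_i)$, and $\Theta$ outside. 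No vortex splitting occurs at any stage. You, by contrast, build the relaxation $\psi\mapsto\Psi$ directly into the competitor by explicitly splitting $z_0$ into sub-charges $z_j$ located at points $x_j^\ep$ at an intermediate scale $\ep^\gamma$, and adding the extra annulus $A_{2\sigma_\ep,\rho}(x_0)$ carrying degree $z_0$ plus the glue region (e). Both approaches are correct and lead to the same multi-parameter $\limsup$ followed by a diagonalization, but yours pays the price of two additional regions and an extra scale $\gamma\in(0,1)$ to let vanish. The paper's reduction via the l.s.c.\ envelope is cleaner and is precisely what makes Remark~\ref{propePsi} worth stating separately.

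One small geometric slip: with $x_j^\ep:=x_0+2\sigma_\ep e_j$ and $|e_j|=1$, the centers lie \emph{on} $\partial B_{2\sigma_\ep}(x_0)$, so the balls $\overline B_{\sigma_\ep/(2J)}(x_j^\ep)$ straddle $\partial B_{2\sigma_\ep}(x_0)$ and your region (e), namely $B_{2\sigma_\ep}(x_0)\setminus\bigcup_j\overline B_{\sigma_\ep/(2J)}(x_j^\ep)$, is not the annular-type domain you rescale; the $\widetilde\Anew$ traces on $\partial B_{\sigma_\ep/(2J)}(x_j^\ep)$ would partly lie outside (e) while parts of $A_{\ep^{s_2},\sigma_\ep/(2J)}(x_j^\ep)$ would overlap region (b). Placing $x_j^\ep$ at distance, say, $\sigma_\ep$ from $x_0$ (or shrinking the inner radius of region (b)) fixes this; the rest of your energy bookkeeping then goes through.
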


{\it Proof of {\rm(i)}.} For every $\ep>0$ we set $\B_\ep:=\{B_\ep(x)\,:\,x\in \supp(\mu_\ep)\}$ and choose $w_\ep\in \AF_\ep(\mu_\ep)$ in such a way that
\begin{equation*}
\precrw(w_\ep; \Omega_\ep(\mu_\ep))\le \precr(\mu_\ep)+C
\end{equation*}
for some constant $C$ independent of $\ep$. We can assume without loss of generality that
\begin{equation}\label{energybound3}
\alpha\int_{\Omega_\ep(\mu_\ep)}|\nabla w_\ep|^2\ud x\le\precrw(w_\ep; \Omega_\ep(\mu_\ep))\le \precr(\mu_\ep)+C \le C|\log\ep|\,,
\end{equation}
where the first inequality is a consequence of assumption \eqref{Gprop}\,.
\medskip

In view of \eqref{energybound3}, by applying \eqref{impbound} with $U=\Omega$, we have that
\begin{equation}\label{impbound3}
\E(\B_\ep, \mu_\ep, \Omega)\le C|\log\ep|\,,
\end{equation}
where $\E$ is defined in \eqref{preminf}-\eqref{minf}\,.
By \eqref{impbound3} and the Jensen inequality, considering the definition of $X_\ep(\Omega)$, we get
\begin{equation}\label{vartotlim}
C|\log\ep|\ge \E(\B_\ep,\mu_\ep,\Omega)\ge 2\pi\alpha \log 2\,\sum_{{B\in\B_\ep}}|\mu_\ep|(B)=2\pi\alpha \log 2\,|\mu_\ep|(\Omega)\,,
\end{equation}
whence we deduce that
\begin{equation}\label{radbound}
\rad(\B_\ep)\le \ep|\mu_\ep|(\Omega)\le C\ep|\log\ep|\to 0\qquad\textrm{as }\ep\to 0\,.
\end{equation}
The claim follows by Proposition \ref{mainthmcranew} whose assumptions are fulfilled in view of \eqref{vartotlim} and \eqref{radbound}.

{\it Proof of {\rm(ii)}.}
Let $\widetilde{\mathscr{F}}_{0}:X(\Omega)\to [0,+\infty)$ be the functional defined by
$$
\widetilde{\mathscr{F}}_{0}(\mu):=\sum_{i=1}^n\psi(z_i;T\!f_{\hom})\qquad\textrm{for every }\mu=\sum_{i=1}^{n}z_i\delta_{z_i}\in X(\Omega)
$$
and note that the functional $\Fzero$ in \eqref{gali} is the lower semicontinuous envelope of $\widetilde{\mathscr{F}}_{0}$ with respect to the flat convergence.
Hence, given $\mu=\sum_{i=1}^n z_i\delta_{x_i}\in X(\Omega)$,
it is enough to construct $w_{\ep}\in \AS_\ep(\mu)$ such that
\begin{equation}\label{proofub}
\limsup_{\ep\to 0}\frac{1}{|\log\ep|}\precrw(w_\ep;\Omega_\ep(\mu))\le\widetilde{\mathscr{F}}_{0}(\mu)= \sum_{i=1}^{n}\psi(z_{i};T\!f_{\hom})\,.
\end{equation}
For this purpose, we take $\rho>0$ such that $\overline B_{2\rho}(x_{i})\subset\Omega$ for every $i=1,\ldots,n$, and
${\overline B_{2\rho}(x_{i})\cap \overline B_{2\rho}(x_{j})=\emptyset}$ for every $i,j=1,\ldots,n$ with $i\neq j$\,.
Since by assumption $\limsup_{\ep\to 0}\frac{\delta_\ep}{\ep}$ is finite, then for any $s$ with
 $0<s<1$ it holds true that
\begin{equation*}
\lim_{\ep\to 0}\frac{\delta_\ep}{\ep^s}=0\,.
\end{equation*}
We set $\bar\rho:=\min\{\rho,\frac 1 2\}$ and for every $i=1,\ldots,n$, we let  $w_{\ep,s}^i\in \widetilde\Anew_{\ep^{s},2\bar\rho}(z_{i})$ (where $\widetilde\Anew_{\ep^{s},2\bar\rho}(z_{i})$ is defined in \eqref{anew}) be such that
\begin{equation}\label{forls}
 \precrw(w^{i}_{\ep,s};A_{\ep^{s},2\bar\rho})\le \inf_{w\in \widetilde\Anew_{\ep^{s},2\bar\rho}(z_{i})} \precrw(w;A_{\ep^{s},2\bar\rho})+C\,,
\end{equation}
for some constant $C$ independent of $\ep$. By arguing as in the proof of Lemma \ref{lm:glim0}, we can write $w_{\ep,s}^i=e^{\iota u_{\ep,s}^i}$ for some function  $ u_{\ep,s}^i\in SBV^2(A_{\ep^s,2\bar\rho}(x_{i}))$ with  $u_{\ep,s}^i(\cdot)=z_{i}\theta(\cdot)$ on $\partial B_{\ep^s}\cup\partial B_{2\bar\rho}$ (where $\theta$ is defined in \eqref{deftheta}).
Let furthermore $\sigma:[\bar\rho,2\bar\rho]\to[0,1]$ be the function defined by $\sigma(r):=\frac{1}{\bar\rho}(r-\bar\rho)$ and set
$\Theta(\cdot):=\sum_{k=1}^n z_{k}\theta(\cdot-x_{k})$\,. We define the function $w_{\ep,s}:\Omega_\ep(\mu)\to\mathcal S^1$ as $w_{\ep,s}:=e^{\iota u_{\ep,s}}$ where
\begin{equation}\label{reccra}
u_{\ep,s}(x)=\begin{cases}
z_{i}\theta(x-x_{i})&\textrm{ if }x\in A_{\ep,\ep^s}(x_{i})\textrm{ for some }i\,,
\\
u_{\ep,s}^i(x)&\textrm{ if }x\in A_{\ep^s,\bar\rho}(x_{i})\textrm{ for some }i\,,
\\
(1-\sigma(|x-x_{i}|)z_{i}\theta(x-x_{i})+\sigma(|x-x_{i}|)\Theta(x)&\textrm { if }x\in A_{\bar\rho,2\bar\rho}(x_{i})\textrm{ for some }i\,,
\\
\Theta(x)&\textrm{ elsewhere\,.}
\end{cases}
\end{equation}
The function $w_{\ep,s}$ belongs to $\AS_\ep(\mu)$\,.
By property \eqref{Gprop}, for every $i=1,\ldots,n$ there exists a constant $C=C(\beta,\bar\rho,\Omega,\{z_{i}\}_i)>0$ such that
\begin{equation}\label{innann}
\precrw(w_{\ep,s};A_{\ep,\ep^s}(x_{i}))\le \beta\int_{A_{\ep,\ep^s}(x_{i})}|\nabla w_{\ep,s}|^2\ud x=2\pi\beta|z_{i}|^2 (1-s)|\log\ep|,
\end{equation}
 \begin{equation}\label{annrho}
\precrw(w_{\ep,s};\Omega_{2\bar\rho}(\mu))\le\beta \int_{\Omega_{2\bar\rho}(\mu)}|\nabla w_{\ep,s}|^2\ud x
\le C\,,
\end{equation}
and
\begin{equation}\label{outann}
\begin{aligned}
\precrw(w_{\ep,s};A_{\bar\rho,2\bar\rho}(x_{i}))
&\le \beta\int_{A_{\bar\rho,2\bar\rho}(x_{i})}|\nabla w_{\ep,s}|^2\ud x\\
&\le C \sum_{\newatop{k=1}{k\neq i}}^n |z_{k}|^2 \int_{A_{\bar\rho,2\bar\rho}(x_{i})}|\sigma'(|x|)|^2 |\theta(x-x_{k})|^2\ud x\\
&\qquad+C\sum_{k=1}^n |z_{k}|^2\int_{A_{\bar\rho,2\bar\rho}(x_{i})}|\nabla\theta(x-x_{k})|^2\ud x\le C\,.
\end{aligned}
\end{equation}
In addition, since $2\bar\rho \le 1$, by  \eqref{forls}, Lemma \ref{lm:glim0} and Proposition \ref{homdeg}, there exists a modulus of continuity $\omega$ such that for every $i=1,\ldots,n$ we have
\begin{equation}\label{ubcore}
\begin{aligned}
\frac{1}{|\log\ep|}\precrw(w_{\ep,s}; A_{\ep^{s},\bar\rho}(x_{i}))&\le
\frac{1}{|\log\ep|}\precrw(w_{\ep,s}; A_{\ep^{s},2\bar\rho}(x_{i}))
\\
&\le\frac{1}{|\log\ep|}\inf_{w\in \Anew_{\ep^s,2\bar\rho}(x_{i})}\precrw(w; A_{\ep^{s},2\bar\rho}(x_{i}))+\omega(\ep)
\\
&=\Big(s-\frac{|\log(2\bar\rho)|}{|\log\ep|}\Big)\psi(z_{i};T\!f_{\hom})+\omega({\ep})\,.
\end{aligned}
\end{equation}
Finally, due to \eqref{innann}, \eqref{annrho}, \eqref{outann} and \eqref{ubcore} we can choose $\omega$ in such a way that
\begin{equation}\label{conclub}
\frac{1}{|\log\ep|}\precrw(w_{\ep,s}; \Omega_{\ep}(\mu))\le  s\sum_{i=1}^n \psi(z_{i};T\!f_{\hom})+2\pi \beta(1-s)\sum_{i=1}^{n}|z_{i}|^2+\omega(\ep)\,.
\end{equation}
Suitably choosing $s_\ep\to 1$ as $\ep\to 0$, we have that $w_\ep=w_{\ep,s_\ep}$ satisfies the relations in \eqref{proofub}.
\qed
\subsection{The Ginzburg-Landau model}\label{sec:mainthmgl}
This subsection is devoted to the proof of Theorem \ref{intro: mainthmgl}, which we prove here under slightly more general assumptions on the potential term.
More specifically, we consider $W\in C^0([0,+\infty))$ such that $W(\tau)\ge 0$, $W^{-1}(0)=\{1\}$ and
$$
\liminf_{\tau\to 1}\frac{W(\tau)}{(1-\tau^2)}>0,\qquad \qquad\liminf_{\tau\to +\infty}W(\tau)>0\,.
$$
and, we define $\pregl^W: H^1(\Omega;\R^2)\to \R$ as
\begin{equation}\label{GL0}
\pregl^W(v):=\int_\Om a\Big(\frac{x} {\delta_\ep}\Big)|\nabla v(x)|^2 \ud x+\frac{1}{\ep^2}\int_\Om W(|v(x)|)  \ud x\,.
\end{equation}

We can re-state Theorem \ref{intro: mainthmgl} as follows.

\begin{theorem}\label{mainthmgldopo}{Let $\pregl^W$ be defined in \eqref{GL0} where $a$ is a measurable $(0,1)^2$-periodic function satisfying $a(x)\in [\alpha,\beta]$ for a.e. $x\in\R^2$.
Let moreover $\ho$ be the symmetric matrix defined in \eqref{defahom}.
If $\limsup_{\ep\to 0}\frac{\delta_\ep}{\ep}< +\infty$,
then the following $\Gamma$-convergence result holds true.

\begin{itemize}
\item[(i)] (Compactness) Let $\left\{v_\ep\right\}_\ep\subset H^1(\Omega;\R^2)$ be such that $\pregl^W(v_\ep)\le C|\log\ep|$. Then, there exists $\mu\in X(\Omega)$ such that, up to subsequences, $Jv_\ep\fla\pi\mu$.
\item[(ii)] (\,$\Gamma$-liminf inequality)
Let $\{v_\ep\}_\ep\subset H^1(\Omega;\R^2)$ be such that $J v_\ep\fla\pi\mu$ for some $\mu\in X(\Omega)$\,. Then
\begin{equation*}
\liminf_{\ep\to 0}\frac{\pregl^W(v_\ep)}{|\log\ep|}\ge 2\pi\sqrt{\det\ho}|\mu|(\Omega).
\end{equation*}
\item[(iii)](\,$\Gamma$-limsup inequality) For every $\mu\in X(\Omega)$, there exists a sequence $\{v_\ep\}_\ep\subset H^1(\Omega;\R^2)$ such that  $Jv_\ep\fla\pi\mu$ and
\begin{equation}\label{lsgldopo}
\limsup_{\ep\to 0}\frac{\pregl^W(v_\ep)}{|\log\ep|}\le2\pi\sqrt{\det\ho}|\mu|(\Omega)\,.
\end{equation}
\end{itemize}
}
\end{theorem}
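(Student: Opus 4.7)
The plan is to leverage the core-radius analysis already performed in Theorem \ref{mainthmcradopo} and Proposition \ref{mainthmcranew}. When the core-radius machinery is applied to the present setting, the Dirichlet integrand is of the special form $f(x,M)=a(x)|M|^{2}$ from \eqref{simplf}, so Proposition \ref{prop:psisimplf} gives $\Fzero(\mu)=2\pi\sqrt{\det\ho}\,|\mu|(\Omega)$. Thus all three parts of the theorem target quantities already understood at the core-radius level. Compactness follows from the pointwise lower bound $a(\cdot)\ge\alpha$, which controls $\pregl^{W}$ from below by a multiple of the classical GL functional (with a potential $W$ satisfying the hypotheses of \cite{JS,ABO}); their compactness theorem then yields $\mu\in X(\Omega)$ with $Jv_\ep\fla\pi\mu$ along a subsequence.

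For the upper bound I would reuse the core-radius recovery sequence $w_\ep\in\AS_\ep(\mu)$ from the proof of Theorem \ref{mainthmcradopo}(ii). Since the construction \eqref{reccra} prescribes the trace of $w_\ep$ on $\partial B_\ep(x_{i})$ to be $\bigl((x-x_{i})/|x-x_{i}|\bigr)^{z_{i}}$, it can be matched continuously by the prototype vortex $v_\ep(x):=(|x-x_{i}|/\ep)\bigl((x-x_{i})/|x-x_{i}|\bigr)^{z_{i}}$ on each core $B_\ep(x_{i})$; setting $v_\ep:=w_\ep$ on $\Omega_\ep(\mu)$ produces a map in $H^{1}(\Omega;\R^{2})$. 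The potential vanishes on $\Omega_\ep(\mu)$ because $|w_\ep|=1$ there, while on each core both the gradient and the potential contribute $O(1)$ by a direct scaling computation (the area is $\pi\ep^{2}$ and the gradient scales like $|z_{i}|/\ep$); hence \eqref{lsgldopo} follows from the core-radius upper bound, and $Jv_\ep\fla\pi\mu$ is a consequence of the classical concentration of Jacobians of such prototype vortices.

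The lower bound is the heart of the matter. Fix $\eta\in(0,1/2)$ and set $\zeta:=1-\eta$. The potential bound $\frac{1}{\ep^{2}}\int_\Om W(|v_\ep|)\le C|\log\ep|$ together with the non-degeneracy of $W$ near $1$ yield $|\{|v_\ep|\le\zeta\}|\le C_{\eta}\,\ep^{2}|\log\ep|$. A Sandier-type ball construction (see \cite{S,J,SS2}) then supplies a finite family $\B_\ep$ of pairwise disjoint open balls covering $\{|v_\ep|\le\zeta\}$ with $\rad(\B_\ep)\le C\ep|\log\ep|$ and $|\mu_\ep|(\Omega)\le C|\log\ep|$, where $\mu_\ep:=\sum_{B\in\B_\ep}\deg(v_\ep/|v_\ep|,\partial B)\,\delta_{x_{B}}$. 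On $\Omega(\B_\ep)$ the map $w_\ep:=v_\ep/|v_\ep|$ lies in $H^{1}(\Omega(\B_\ep);\mathcal{S}^{1})$; the polar decomposition gives $|\nabla v_\ep|^{2}\ge|v_\ep|^{2}|\nabla w_\ep|^{2}\ge\zeta^{2}|\nabla w_\ep|^{2}$, so
\[
\pregl^{W}(v_\ep)\ge \zeta^{2}\int_{\Omega(\B_\ep)}a\Bigl(\frac{x}{\delta_\ep}\Bigr)|\nabla w_\ep|^{2}\,\ud x\ge \zeta^{2}\,\precr(\mu_\ep,\B_\ep).
\]
The hypotheses of Proposition \ref{mainthmcranew} are then all in place provided $\mu_\ep\fla\mu$; this is checked by identifying $\pi\mu_\ep$ with the flat-norm asymptotic of $J_{\zeta}v_\ep$ localized on the balls (via Stokes and Lemma \ref{bc_lemma}, exploiting $\rad(\B_\ep)\to 0$ and the uniform bound on $|\mu_\ep|(\Omega)$), and bridging to $Jv_\ep$ through Proposition \ref{sempre}. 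Dividing by $|\log\ep|$, taking the liminf, and finally letting $\eta\to 0$ yields the asserted inequality.

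The principal obstacle is precisely this identification $\mu_\ep\fla\mu$, combined with the simultaneous control of the radius and of the total variation of $\mu_\ep$ uniformly in $\ep$. All the necessary ingredients (Proposition \ref{sempre}, Lemma \ref{cosu}, Lemma \ref{bc_lemma}, and Proposition \ref{mainthmcranew} itself) are in place in the paper, but their interplay with the ball construction in the presence of the heterogeneous weight $a(\cdot/\delta_\ep)$ requires careful bookkeeping, especially to ensure that the Sandier-type construction still delivers $\rad(\B_\ep)\le C\ep|\log\ep|$ when the Dirichlet integrand carries the oscillating coefficient rather than being a pure $|\nabla v|^{2}$.
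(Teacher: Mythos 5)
Your overall strategy is the paper's: reduce the liminf to Proposition \ref{mainthmcranew} applied with a suitably constructed $(\mu_\ep,\B_\ep)$, reuse the core-radius recovery sequence of Theorem \ref{mainthmcradopo}(ii) for the limsup (extending into the cores by the prototype vortex and noting that the potential vanishes on $\Omega_\ep(\mu)$), and identify the constant through Proposition \ref{prop:psisimplf}. The compactness argument, using $a\ge\alpha$ to dominate the classical GL functional from above by a multiple of $\pregl^W$ and then invoking the classical compactness theorem, is also the one in the paper.

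There is, however, a genuine gap in your lower bound. You claim that the measure estimate $|\{|v_\ep|\le\zeta\}|\le C_\eta\,\ep^2|\log\ep|$, obtained from the potential term, is enough for a Sandier-type construction to produce a family $\B_\ep$ of disjoint balls covering $\{|v_\ep|\le\zeta\}$ with $\rad(\B_\ep)\le C\ep|\log\ep|$. A bound on the \emph{measure} of the bad set does not imply a ball covering with small total radius: a set of area $\ep^2|\log\ep|$ can be a long thin tube (width $\sim\ep^2|\log\ep|$, length $\sim 1$), and any covering by disjoint balls must have total radius $\gtrsim 1$, not $O(\ep|\log\ep|)$. What is actually needed, and what the paper proves, is a bound on the $\mathcal H^1$-measure of a suitable level set of $|v_\ep|$. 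The paper obtains this via a Modica--Mortola-type argument: apply Young's inequality to the cross term $\frac{2\sqrt\alpha}{\ep}\int\sqrt{W(|v_\ep|)}\,|\nabla|v_\ep||$, pass to the auxiliary scalar $\hat v_\ep=|h(|v_\ep|)|$ with $h(t)=\int_t^1\sqrt{W(s)}\,\ud s$ so that $|\nabla\hat v_\ep|=\sqrt{W(|v_\ep|)}\,|\nabla|v_\ep||$, use the co-area formula and a mean-value argument to select a level $\bar\tau_\ep$ at which $\mathcal H^1(\{\hat v_\ep=\bar\tau_\ep\})\le C_{\bar\tau}\ep|\log\ep|$, and only then invoke the Hausdorff-measure definition to produce a ball covering of $\partial K_{\ep,\gamma_1^\ep,\gamma_2^\ep}$ with controlled $\rad$. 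This is a substantive step, not bookkeeping. You also misidentify the source of the difficulty as the heterogeneous weight $a(\cdot/\delta_\ep)$ interfering with the ball construction; since $a\in[\alpha,\beta]$ the weight is harmless throughout and plays no role at this stage. The issue is entirely the passage from a measure bound to a length bound, which requires using the Dirichlet term together with the potential, and which your proposal leaves unaddressed.
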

\begin{proof}
Since $a\le\beta$ a.e., the compactness property (i) is a corollary of classical results in the variational analysis of the classical GL functional (see for instance \cite[Theorem 4.1]{AP}).

{\it Proof of {\rm(ii)}.}
The strategy of the proof is to bound from below $\pregl^W(v_\ep)$ with $\precr(\mu_\ep,\B_\ep)$ defined in \eqref{cra_new} for a suitable choice of $\mu_\ep$ and $\B_\ep$ satisfying the assumptions of Proposition \ref{mainthmcranew}.
Without loss of generality we can assume that
\begin{equation}\label{energbound}
\pregl^W(v_\ep)\le C|\log\ep|
\end{equation}
and by the standard density arguments we can also assume that $\{v_\ep\}_{\e}\subset H^{1}(\Omega;\R^2)\cap C^1(\Omega;\R^2)$.
For every $0<\gamma_1,\gamma_2<\frac 1 2$ and for every $\ep>0$ we set
\begin{equation*}
K_{\ep,\gamma_1,\gamma_2}:=\{|v_\ep|\le 1-\gamma_1\}\cup\{|v_\ep|\ge 1+\gamma_2\}\qquad\textrm{and}\qquad \Lambda_{\ep,\gamma_1,\gamma_2}:=\partial K_{\ep,\gamma_1,\gamma_2}\setminus\partial\Omega.
\end{equation*}

By \eqref{simplf} and by the Young inequality we have that
\begin{equation}\label{ste1}
\begin{aligned}
C|\log\ep|\ge &\int_{\Omega}\alpha|\nabla|v_\ep||^2+\frac 1 {\ep^2}W(v_\ep)\ud x
\ge 2\frac{\sqrt{\alpha}}{\ep}\int_{\Omega}\sqrt{W(|v_\ep|)}|\nabla|v_\ep||\ud x.
\end{aligned}
\end{equation}
For every $t\in\R$ we set
\begin{equation*}
h(t)=\int_{t}^1\sqrt{W(s)}\ud s
\end{equation*}
and we define the function $\hat v_\ep:\Omega\to\R^+$ as $\hat v_\ep(x)=|h(|v_\ep(x)|)|$. Note that $\hat v_\ep\in H^1(\Omega)$ and that $|\nabla \hat v_\ep|=\sqrt{W(|v_\ep|)}|\nabla|v_\ep||$, so that
by \eqref{ste1}, the coarea formula and the mean-value theorem, for every $\bar \tau\in (0,1)$ there exists $\bar\tau_\ep\in (0,\bar\tau)$ such that
\begin{equation}\label{ste2}
C\ep|\log\ep|\ge \int_{\frac{\bar\tau}{2}}^{\bar \tau}\mathcal{H}^1(\{\hat v_\ep=\tau\})\ud \tau\ge \frac{\bar\tau}{2} \mathcal{H}^1(\{\hat v_\ep=\bar\tau_\ep\}).
\end{equation}
We set $\gamma^\ep_1:=1-h^{-1}(\bar\tau_\ep)$ and $\gamma^\ep_2:=h^{-1}(-\bar\tau_\ep)-1$ and note that, by construction, there exists $\gamma^{\bar \tau}\in (0,1)$ such that $\gamma^{\bar \tau}\to 0$ as $\bar\tau\to 0$  and there exists a constant $0<c<1$ (independent of $\bar\tau$) such that $\gamma^\ep_1,\gamma^\ep_2\in(c\gamma^{\bar \tau},\gamma^{\bar\tau})$.
Moreover, we have that
$$
\{\hat v_\ep<\bar\tau_\ep\}=\{h^{-1}(\bar\tau_\ep)<|v_\ep|<h^{-1}(-\bar\tau_\ep)\}=
\{1-\gamma^\ep_1<|v_\ep|<1+\gamma^\ep_2\}=\Omega\setminus K_{\ep,\gamma_1^\ep,\gamma_2^\ep}
$$
and from the regularity of the function $v_\ep$ it follows that
$$
\{\hat v_\ep=\bar\tau_\ep\}=\partial \{\hat v_\ep<\bar\tau_\ep\}\setminus\partial\Omega=\partial(\Omega\setminus K_{\e,\gamma^\ep_1,\gamma^\ep_2})\setminus\partial\Omega
=\partial K_{\ep,\gamma^\ep_1,\gamma^\ep_2}\setminus\partial\Omega=\Lambda_{\ep,\gamma^\ep_1,\gamma^\ep_2}.
$$
Therefore, by \eqref{ste2}, we obtain that
\begin{equation}\label{ste3}
\mathcal H^1(\Lambda_{\ep,\gamma_1^\ep,\gamma_2^\ep})\le C_{\bar\tau}\ep|\log\ep|.
\end{equation}
By \eqref{energbound}, we have that
\begin{equation}\label{ste4}
C\ep^2|\log\ep|\ge \int_{K_{\ep,\gamma^\ep_1,\gamma^\ep_2}}W(|v_\ep|)\ud x\ge C_{\bar\tau}|K_{\ep,\gamma^\ep_1,\gamma^\ep_2}|.
\end{equation}
As a result, thanks to the Lipschitz regularity of $\partial\Omega$ and to \eqref{ste3}, we have that
\begin{equation}\label{ste5}
\mathcal H^1(\partial K_{\ep,\gamma^\ep_1,\gamma^\ep_2})\le C_{\bar\tau}\mathcal H^1(\Lambda_{\ep,\gamma^\ep_1,\gamma^\ep_2})\le C_{\bar\tau}\ep|\log\ep|.
\end{equation}
Note that, by definition of Hausdorff measure, since $\partial K_{\ep,\gamma^\ep_1,\gamma^\ep_2}$ is compact, it is always contained in a finite union of balls $B_{r_i}(y_i)$ such that $\sum_{i}r_i\le\mathcal{H}^1(\partial K_{\ep,\gamma^\ep_1,\gamma^\ep_2})$\,. 
Moreover, after a merging procedure, we can always assume that such balls are disjoint. In view of \eqref{ste4}, for $\ep$ small enough, we have that  $K_{\ep,\gamma^\ep_1,\gamma^\ep_2}$  is contained in the union of such balls.
Therefore thanks to the previous argument, by \eqref{ste5},  we have proved that there exists a family of balls, that are denoted by $\B'_{\ep}$\,, whose union contains $K_{\ep,\gamma^\ep_1,\gamma^\ep_2}$\, and such that
\begin{equation}\label{rad}
\rad(\B'_{\ep})\le C_{\bar\tau}\mathcal{H}^1(\partial K_{\ep,\gamma^\ep_1,\gamma^\ep_2})\le
C_{\bar\tau}\ep|\log\ep|.
\end{equation}
For every $\ep>0$, let $\B'_\ep(t)$ be a time parametrized family of balls constructed as in Proposition \ref{ballconstr} starting from $\B'_\ep(0):=\B'_\ep$.
Set $\B_\ep:=\B'_\ep(1)$, $\Ccal_\ep:= \{ B\in\B_\ep \,:\, B\subset \Om\}$ and
$\mu_\ep:=\sum_{{B\in\Ccal_\ep}}\deg(v_\ep,\partial B)\delta_{x_B}\,,$ where $x_B$ denotes the center of the ball $B$.
Note that
\begin{equation}\label{alsolater}
1-\gamma^{\bar\tau}<|v_\ep|<1+\gamma^{\bar\tau}\quad\textrm{in }\Omega(\B'_\ep)\supset \Omega(\B_\ep).
\end{equation}
Now we consider the function $w_\ep:\Omega(\B'_\ep)\to \R^2$ defined as $w_\ep(x):=\frac{v_\ep(x)}{|v_\ep(x)|}$\,, and we note that $w_\ep\in H^1(\Omega(\B'_\ep);\mathcal S^1)$. Moreover, considering \eqref{simplf}, \eqref{alsolater}, using the relation $|\nabla v_{\ep}|^{2}=|v_\ep|^2\Big|\nabla\frac{v_\ep}{|v_\ep|}\Big|^2+|\nabla|v_{\e}||^{2}$ and  applying Proposition \ref{ballconstr}(4) with $t_1=0$, $t_2=1$, and $U=\Omega$, we obtain that
\begin{equation*}
\begin{aligned}
C|\log\ep|& \ge \int_{\Omega(\B'_\ep)}\alpha |\nabla v_\ep|^2\ud x\ge \int_{\Omega(\B'_\ep)}\alpha |v_\ep|^2\Big|\nabla\frac{v_\ep}{|v_\ep|}\Big|^2\ud x\ge \alpha (1-\gamma^{\bar\tau})^2 \int_{\Omega(\B'_\ep)} |\nabla w_\ep|^2\ud x \\
&\ge2\pi\alpha (1-\gamma^{\bar\tau})^2|\mu_\ep|(\Omega)\log 2\,,
\end{aligned}
\end{equation*}
from which we deduce that
\begin{equation}\label{ste7.0}
|\mu_\ep|(\Omega)\le C_{\bar\tau}|\log\ep|.
\end{equation}
Furthermore, by Proposition \ref{ballconstr}(5) and by \eqref{rad} it follows that
\begin{equation}\label{ste7.1}
\rad(\B_\ep)\le 2\rad(\B'_\ep)\le C_{\bar\tau}\ep|\log\ep|.
\end{equation}
Now we show that
\begin{equation}\label{ste7}
\mu_\ep \fla\mu.
\end{equation}
By \eqref{defcur} $\deg(v_\ep,\partial B)=\deg(w_\ep,\partial B)$ for every $B\in\Ccal_\ep$. Hence, recalling the notion of modified Jacobian introduced in \eqref{moja}, we have that
\begin{equation}\label{ste5.5}
(J_{1-\gamma_1^\ep}v_\ep-\pi\mu_\ep)(B)=0\qquad\textrm{ for every }B\in\Ccal_\ep\,.
\end{equation}
Using the triangle inequality, Proposition \ref{sempre}, \eqref{ste7.0}, \eqref{ste7.1} and \eqref{ste5.5} we also have that
\begin{equation}\label{ste8}
\begin{aligned}
\|Jv_\ep-\pi\mu'_\ep\|_{\flt}&\le \|Jv_\ep-J_{1-\gamma_1^\ep}v_\ep\|_{\flt}+\|J_{1-\gamma_1^\ep}v_\ep-\pi\mu'_\ep\|_{\flt}\\
&\le  C_{\bar\tau}\ep|\log\ep|+2\sup_{\|\ffi\|_{C_c^{0,1}(\Omega)}\le 1}\sum_{B\in\B_\ep}|\mu_{\ep}|(B)\mathrm{osc}_B(\ffi)\\
&\le  C_{\bar\tau}\ep|\log\ep|+2\rad(\B_\ep)|\mu_{\ep}|(\Omega)\\
&\le  C_{\bar\tau}\ep|\log\ep|+2C_{\bar\tau}\ep|\log\ep|^2.
\end{aligned}
\end{equation}
Eventually, \eqref{ste7} follows from \eqref{ste8} and from the assumption $J v_\ep\fla\pi\mu$ applying the triangle inequality.

Thanks to \eqref{simplf} and \eqref{alsolater}, we get that
\begin{equation}\label{ste6}
\begin{aligned}
\pregl^W(v_\ep)&\ge \int_{\Omega(\B_\ep)}a\Big(\frac{x}{\delta_\ep}\Big)|\nabla v_\ep|^2\ud x\ge\int_{\Omega(\B_\ep)}a\Big(\frac{x}{\delta_\ep}\Big)|v_\ep|^2\Big|\nabla\frac{v_\ep}{|v_\ep|}\Big|^2\ud x\\
&\ge (1-\gamma^{\bar\tau})^2 \int_{\Omega(\B_\ep)}a\Big(\frac{x}{\delta_\ep}\Big)|\nabla w_\ep|^2\ud x\ge (1-\gamma^{\bar\tau})^2 \precr(\mu_\ep,\B_\ep),
\end{aligned}
\end{equation}
where $\precr$ is defined in \eqref{cra_new} with $f(\frac{x}{\delta_\ep},\nabla w(x))=a(\frac{x}{\delta_\ep})|\nabla w(x)|^2$.
Thanks to \eqref{ste7.0}, \eqref{ste7.1}, \eqref{ste7} and \eqref{ste6} we are in a position to apply Proposition \ref{mainthmcranew}, obtaining
$$
\liminf_{\ep\to 0}\frac{\pregl^W(v_\ep)}{|\log\ep|}\ge(1-\gamma^{\bar\tau})^2\Fzero(\mu)=\sqrt{\det\ho}|\mu|(\Omega)\,,
$$
where the last equality follows by Proposition \ref{prop:psisimplf}.
The claim follows letting $\bar\tau\to 0$.

{\it Proof of {\rm(iii)}.}
We prove the claim under more general assumptions on the functional $\pregl^W$. Specifically, let $f$ be a function satisfying \eqref{Pprop}, \eqref{Gprop}, \eqref{Hprop} and define the energy functional $\pregl^{W,f}:H^{1}(\Omega;\R^2)\to [0,+\infty)$ as
$$
\pregl^{W,f}(v):=\int_\Om f\Big(\frac{x} {\delta_\ep},\nabla v(x)\Big) \ud x+\frac{1}{\ep^2}\int_\Om W(|v(x)|)  \ud x\,.
$$
 We prove that for every $\mu=\sum_{i=1}^nz_{i}\delta_{x_{i}}\in X(\Om)$ there exists a sequence $\{v_\ep\}_\ep\subset H^{1}(\Om;\R^2)$ such that $Jv_\ep\fla\pi\mu$ and
 \begin{equation}\label{lsgldopo2}
 \limsup_{\ep\to 0}\frac{\pregl^{W,f}(v_\ep)}{|\log\ep|}\le \Fzero(\mu)\,,
 \end{equation}
 where $\Fzero$ is defined in \eqref{gali}. In view of Proposition \ref{prop:psisimplf}  we have that \eqref{lsgldopo} is a consequence of \eqref{lsgldopo2}.

By arguing as in  the proof of Theorem \ref{mainthmcradopo} (iii), we may reduce to the case $\Psi(z_{i};T\!f_{\hom})=\psi(z_{i};T\!f_{\hom})$ for every $i=1,\ldots,n$\,.
Let $\rho>0$ be such that $\overline B_{2\rho}(x_{i})\subset\Omega$ for every $i=1,\ldots,n$ and
$\overline B_{2\rho}(x_{i})\cap \overline B_{2\rho}(x_{j})=\emptyset$ for every $i,j=1,\ldots,n$ with $i\neq j$.
For every $0<s<1$, since by assumption $\limsup_{\ep\to 0}\frac{\delta_\ep}{\ep}$ is finite, we have that
$\lim_{\ep\to 0}\frac{\delta_\ep}{\ep^s}=0$.
Finally, we set $\Omega_\ep(\mu):=\Omega\setminus\bigcup_{i=1}^n B_{\ep}(x_{i})$ and we let $u_\ep^s$ be the function defined in \eqref{reccra}.
For every $\ep>0$ we set
 \begin{equation*}
v_{\ep,s}(x):=\begin{cases}
e^{\iota u^s_\ep(x)}&\textrm{ if }x\in\Omega_{\ep}(\mu)\,,\\
\frac{|x-x_{i}|}{\ep}\Big(\frac{x-x_{i}}{|x-x_{i}|}\Big)^{z_{i}}&\textrm{ if }x\in B_{\ep}(x_{i})\textrm{ for some }i\,,
\end{cases}
\end{equation*}
and we note that $v_{\ep,s}\in H^1(\Omega;\R^2)$ and that $Jv_{\ep,s}=\pi\mu$ for every $\ep>0$. In addition, for almost every $x\in\Om_\ep(\mu)$ we have that $|v_{\ep,s}(x)|=1$, hence $W(|v_{\ep,s}(x)|)=0$. The latter yields
\begin{equation}\label{estcoresvep}
\int_{\Omega}W(|v_{\ep,s}|)\ud x=\sum_{i=1}^{n}\int_{B_\ep(x_{i})}W(|v_{\ep,s}|)\ud x\le nC\pi\ep^2\,
\end{equation}
by the continuity of $W$. Furthermore, by the very definition of $v_{\ep,s}$, we have that
\begin{equation}\label{estcoresgrad}
\sum_{i=1}^{n}\int_{B_\ep(x_{i})}f\Big(\frac{x}{\delta_\ep},v_{\ep,s}\Big)\ud x\le\sum_{i=1}^{n}\int_{B_\ep(x_{i})}|\nabla v_{\ep,s}|^2\ud x\,
\le 2\sum_{i=1}^{n}\pi(1+|z_{i}|^2)\,.
\end{equation}
Gathering together \eqref{estcoresvep}, \eqref{estcoresgrad} and \eqref{conclub}, we eventually obtain that
\begin{equation*}
\begin{aligned}
\frac{1}{|\log\ep|}\pregl^{W,f}(v_\ep)&\le \frac{1}{|\log\ep|}\int_{\Omega_\ep(\mu)}f\Big(\frac{x}{\delta_\ep},v_{\ep,s}\Big)\ud x+\mathrm{o}(1)\\
&\le s\sum_{i=1}^n \psi(z_{i};T\!f_{\hom})+2\pi \beta(1-s)\sum_{i=1}^{n}|z_{i}|^2+\mathrm{o}(1)\,,
\end{aligned}
\end{equation*}
which, suitably choosing $s_\ep\to 1$ as $\ep\to 0$ and setting $v_{\ep}:=v_{\ep,s_\ep}$\,, gives \eqref{lsgldopo2}. 
\end{proof}
\section{The case $\delta_\ep\gg\ep$}\label{sec:delta>ep}
This section is devoted to the proofs of Theorems \ref{cratbw} and of \ref{gltbw}.
We will prove the above $\Gamma$-convergence results under the assumption that
\begin{equation}\label{limlog0}
\lim_{\ep\to 0}\delta_\ep= 0\,,\qquad\lambda:=\lim_{\ep\to 0}\frac{|\log\delta_\ep|}{|\log\ep|}\in[0,1)\,.
\end{equation}
\subsection{The core-radius approach}
For the reader's convenience, we re-state Theorem \ref{cratbw} and we recall that $X_\ep(\Omega)$ is defined in \eqref{measep}.
\begin{theorem}\label{cradelta>ep}
Let $\precrw,\,\precr$ be defined in \eqref{aform}, \eqref{intro:cra_en}, respectively, with $f$ of the form \eqref{simplf}, where $a$ is a measurable $(0,1)^2$-periodic function satisfying $a(x)\in [\alpha,\beta]\subset (0,+\infty)$ for a.e. $x\in\R^2$\,. Let moreover $\ho$ be the matrix defined in \eqref{defahom}. If
\eqref{limlog0} is satisfied,
then the following statements hold true.
\begin{itemize}
\item[(i)] (\,$\Gamma$-liminf inequality)
For any family $\left\{\mu_\ep\right\}_\ep\subset X(\Omega)$ such that $\mu_\ep\in X_\ep(\Omega)$ for every $\ep>0$ and
$\mu_\ep\fla\mu$ with $\mu\in X(\Omega)$  we have
\begin{equation*}
\liminf_{\ep\to 0}\frac{\precr(\mu_\ep)}{|\log\ep|}\ge2\pi\bigg(\Big(1-\lambda)\mathrm{ess}\inf a+ \lambda\sqrt{\det\ho}\bigg)|\mu|(\Omega)\,.
\end{equation*}
\item[(ii)](\,$\Gamma$-limsup inequality) For every $\mu\in X(\Omega)$, there exists a sequence
$\left\{\mu_\ep\right\}_\ep\subset X(\Omega)$ with $\mu_\ep\in X_\ep(\Omega)$ for every $\ep>0$
such that  $\mu_\ep\fla\mu$ and
\begin{equation*}
\limsup_{\ep\to 0}\frac{\precr(\mu_\ep)}{|\log\ep|}\le2\pi\bigg(\Big(1-\lambda)\mathrm{ess}\inf a+ \lambda\sqrt{\det\ho}\bigg)|\mu|(\Omega)\,.
\end{equation*}
\end{itemize}
\end{theorem}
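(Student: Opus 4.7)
For the liminf inequality in (i), I would mirror the reduction used in Theorem \ref{mainthmcradopo}(i). Given $\mu_\ep\in X_\ep(\Omega)$ with $\mu_\ep\fla\mu$, pick near-minimizers $w_\ep\in\AS_\ep(\mu_\ep)$ of $\precr(\mu_\ep)$ and set $\B_\ep:=\{B_\ep(x):x\in\supp\mu_\ep\}$; these are pairwise disjoint by the very definition of $X_\ep(\Omega)$. Assuming WLOG $\precr(\mu_\ep)\le C|\log\ep|$, the comparison inequality \eqref{impbound} applied to the innermost annuli $A_{\ep,2\ep}(x_i)$ gives $|\mu_\ep|(\Omega)\le C|\log\ep|$, and hence $\rad(\B_\ep)\le\ep\,|\mu_\ep|(\Omega)\le C\ep|\log\ep|$. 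Proposition \ref{mainthmcranewdelta>ep} then applies and yields (i) at once.

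For the limsup in (ii), by a standard localization I reduce to $\mu=z\delta_{x_0}$, $z\in\Z\setminus\{0\}$. Since the target cost is linear in $|z|$, I split this single singularity into $|z|$ unit vortices: fixing a small $\gamma>0$, pick a Lebesgue point $y^\star\in(0,1)^2$ of $\mathbf{1}_{\{a\le\mathrm{ess}\inf a+\gamma\}}$ (a set of positive measure by the very definition of essential infimum), and set $x^k_\ep:=\delta_\ep\lfloor x_0/\delta_\ep\rfloor+\delta_\ep y^\star+k\delta_\ep e_1$, $k=1,\dots,|z|$. By periodicity, $x^k_\ep/\delta_\ep\equiv y^\star\pmod{\Z^2}$; the points are $\delta_\ep$-separated, so since $\delta_\ep/\ep\to\infty$ one has $\mu_\ep:=\mathrm{sgn}(z)\sum_k\delta_{x^k_\ep}\in X_\ep(\Omega)$ for $\ep$ small, and $|z|\delta_\ep\to 0$ ensures $\mu_\ep\fla z\delta_{x_0}$. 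I then build $w_\ep\in\AS_\ep(\mu_\ep)$ by pasting three blocks: on each microscopic core $A_{\ep,\rho_\ep}(x^k_\ep)$, with $\rho_\ep:=\delta_\ep/|\log\delta_\ep|$, take the canonical unit vortex $(x-x^k_\ep)/|x-x^k_\ep|$; on the intermediate annulus $A_{\rho_\ep,R_\ep}(x_0)\setminus\bigcup_k\overline{B_{\rho_\ep}(x^k_\ep)}$, with $R_\ep:=\delta_\ep|\log\delta_\ep|$, use the product field $\prod_k\bigl((x-x^k_\ep)/|x-x^k_\ep|\bigr)^{\mathrm{sgn}(z)}$; on the macroscopic annulus $A_{R_\ep,R}(x_0)$, with $R>0$ fixed so that $\overline{B_R(x_0)}\subset\Omega$, take the restriction of the homogenization recovery for a degree-$z$ configuration at $x_0$ furnished by Corollary \ref{homdeg0} on the larger annulus $A_{\ep^{\lambda+\mu},R}(x_0)$ ($\mu>0$ small). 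A Lemma \ref{lm:glim0}-type boundary adjustment at each interface costs $O(z^2)$, hence $o(|\log\ep|)$ after normalisation.

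The three contributions can then be estimated separately. The microscopic block, after the change of variables $y=x/\delta_\ep$, reduces per vortex to $\int_{A_{\ep/\delta_\ep,1/|\log\delta_\ep|}(y^\star)}a(y)|y-y^\star|^{-2}\,dy$; splitting the integrand at level $\mathrm{ess}\inf a+\gamma$ and exploiting the Lebesgue density bound $|B_r(y^\star)\cap\{a>\mathrm{ess}\inf a+\gamma\}|\le\eta\pi r^2$ (valid for $r$ small enough depending on $\eta$), one integration by parts in $G(\rho):=|B_\rho(y^\star)\cap\{a>\mathrm{ess}\inf a+\gamma\}|$ bounds this integral by $2\pi(\mathrm{ess}\inf a+\gamma+C\beta\eta)\log(\rho_\ep/\ep)$. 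Summed over the $|z|$ vortices, divided by $|\log\ep|$, and with $\ep\to 0$, then $\eta,\gamma\to 0$, this gives $2\pi(1-\lambda)\,\mathrm{ess}\inf a\cdot|z|$ (using $\log(\rho_\ep/\ep)/|\log\ep|\to 1-\lambda$). The intermediate block has logarithmic width $2\log|\log\delta_\ep|=o(|\log\ep|)$, so its energy, of order $|z|^2\log|\log\ep|$, is negligible after normalisation. The macroscopic block, by the scaling argument of Proposition \ref{homdeg} combined with $\psi(z;T\!f_{\hom})=2\pi\sqrt{\det\ho}|z|$ from Proposition \ref{prop:psisimplf}, is bounded by $(\lambda+\mu)\cdot 2\pi\sqrt{\det\ho}|z|\cdot|\log\ep|(1+o(1))$; sending $\mu\to 0$ one recovers $2\pi\lambda\sqrt{\det\ho}\cdot|z|$. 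A final diagonalisation in $(\gamma,\eta,\mu)$ concludes.

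\textbf{Main obstacle.} The most delicate step is the microscopic estimate: since $a$ is only measurable, the value $\mathrm{ess}\inf a$ need not be attained pointwise, and the Lebesgue density reasoning sketched above is what substitutes for continuity of $a$. Its sharpness hinges on the density defect being genuinely $o(r^2)$, which exactly matches the $|y-y^\star|^{-2}$ weight via one integration by parts, and it dictates the choice $\rho_\ep=\delta_\ep/|\log\delta_\ep|$: small enough that $\rho_\ep/\delta_\ep\to 0$ (needed to invoke Lebesgue density at the rescaled point $y^\star$), yet large enough that $\log(\rho_\ep/\ep)=(1-\lambda+o(1))|\log\ep|$ (needed to capture the full microscopic logarithmic range). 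A secondary difficulty, the multi-to-single-vortex transition in the intermediate annulus, is resolved by that annulus having $o(|\log\ep|)$ logarithmic length.
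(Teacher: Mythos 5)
Part (i) of your proposal matches the paper's argument step by step (same choice of $\B_\ep$, same $\eqref{impbound}$-based bound on $|\mu_\ep|(\Omega)$ and $\rad(\B_\ep)$, then Proposition~\ref{mainthmcranewdelta>ep}); nothing to add there. Your microscopic estimate in part (ii) is also sound and is in the spirit of the paper's (you use a Lebesgue point of $\{a\le \mathrm{ess}\inf a+\gamma\}$ and an integration by parts in $G(\rho)$; the paper instead uses dyadic annuli in \eqref{primoi1}, but both hinge exactly on the density-$1$ property of $y_\eta$). Your choice $\rho_\ep=\delta_\ep/|\log\delta_\ep|$ is a nice variant: it keeps the microscopic annulus inside a single period cell so no ``overflow'' term appears, whereas the paper sets the transition at $\delta_\ep^s$, $s<1$, and absorbs the overflow by the crude $\beta$ bound $2\pi\beta(1-s)\lambda$ which is then killed as $s\to1$.

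The macroscopic block, however, has two genuine gaps. First, you invoke Proposition~\ref{homdeg} on $A_{\ep^{\lambda+\mu},R}$, but the hypothesis of that proposition is $\lim_\ep \delta_\ep/\ep^{s_2}=0$, i.e.\ the inner radius must be \emph{much larger} than $\delta_\ep$, so that the annulus contains many period cells. Here $\ep^{\lambda+\mu}/\delta_\ep\sim\ep^{\mu+o(1)}\to 0$, i.e.\ $\ep^{\lambda+\mu}\ll\delta_\ep$, so the hypothesis fails. To make Proposition~\ref{homdeg} applicable you must take an inner radius of the form $\ep^{\lambda-\mu}$ (or, as in the paper, $\delta_\ep^s$ with $s<1$), and then treat the remaining gap $A_{R_\ep,\ep^{\lambda-\mu}}$ (of logarithmic width $O(\mu|\log\ep|)$) with a crude $\beta$ bound that vanishes as $\mu\to 0$.

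Second, and more fundamentally, you misquote Proposition~\ref{prop:psisimplf}: that proposition gives $\Psi(z;T\!f_{\hom})=2\pi\sqrt{\det\ho}\,|z|$ for the \emph{relaxed} cost, whereas the quantity that appears in Proposition~\ref{homdeg} is $\psi(z;T\!f_{\hom})=2\pi\sqrt{\det\ho}\,|z|^2$. In your construction the $|z|$ unit vortices are at mutual distance $O(\delta_\ep)$, which is far smaller than $R_\ep=\delta_\ep|\log\delta_\ep|$; hence on $A_{R_\ep,R}(x_0)$ your map has a single degree-$z$ singularity, and \emph{any} such competitor pays at least $2\pi\alpha z^2\log(R/R_\ep)\sim 2\pi\alpha z^2\lambda|\log\ep|$, which exceeds the target $2\pi\lambda\sqrt{\det\ho}\,|z||\log\ep|$ as soon as $\alpha|z|>\sqrt{\det\ho}$. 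The repair is the one the paper uses: reduce by density to measures with \emph{all} $|z_i|=1$ and the $x_i$ at macroscopically separated points (e.g.\ at a fixed small spacing $\zeta$, to be sent to $0$ after $\ep$), so that each macroscopic annulus carries degree $\pm1$ and contributes $\lambda\psi(\pm1)=2\pi\lambda\sqrt{\det\ho}$; a final diagonalisation in $\zeta$ then closes the argument.
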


\noindent{\it Proof of {\rm(i)}.}
For every $\ep>0$ we set $\B_\ep:=\{B_\ep(x)\,:\,x\in \supp(\mu_\ep)\}$ and choose $w_\ep\in \AF_\ep(\mu_\ep)$ in such a way that
\begin{equation}\label{infmin000}
\precrw(w_\ep; \Omega_\ep(\mu_\ep))\le \precr(\mu_\ep)+C
\end{equation}
for some constant $C$ independent of $\ep$.
We can assume without loss of generality that
\begin{equation}\label{energybound4}
\alpha\int_{\Omega_\ep(\mu_\ep)}|\nabla w_\ep|^2\ud x\le\precrw(w_\ep; \Omega_\ep(\mu_\ep))\le \precr(\mu_\ep)+C \le C|\log\ep|\,.
\end{equation}
By arguing as in the first part of the proof of Theorem \ref{mainthmcradopo}(ii) we get that $|\mu_\ep|(\Omega)\le C|\log\ep|$ and hence $\rad(\B_\ep)\le C\ep|\log\ep|$\,. Therefore, Proposition \ref{mainthmcranewdelta>ep} yields the claim.
\vskip8pt

{\it Proof of {\rm(ii)}.}
By standard density arguments in the Ginzburg-Landau theory, we can reduce to the case that $\mu=\sum_{i=1}^nz_i\delta_{x_i}$ with $|z_i|=1$\,.
We set $m:=\mathrm{ess}\inf a$ and for every $\eta\in (0,1)$ let
$$
E_\eta:=\{y\in [0,1)^2\,:\,a(y)\le m+\eta\}\,.
$$
By the very definition of $\mathrm{ess}\inf$ we have that $|E_\eta|>0$ for every $\eta$ and there exists $y_\eta\in E_\eta$ having density $1$ in $E_\eta$\,, i.e.,
\begin{equation}\label{den1}
\lim_{r\to 0}\frac{|E_\eta\cap B_{r}(y_\eta)|}{ r^2 }=1\,.
\end{equation}
For every $i=1,\ldots,n$ we set
\begin{equation}\label{newsing}
x_i^{\delta_\ep,\eta}:=\delta_\ep\Big\lfloor {\frac{x_i}{\delta_\ep}}\Big\rfloor+\delta_\ep y_\eta\,.
\end{equation}
Since $\delta_\ep\to 0$ as $\ep\to 0$ we have that $x^{\delta_\ep,\eta}_i\to x_i$ as $\ep\to 0$ for every $i=1,\ldots,n$.  Therefore, setting
\begin{equation}\label{recose}
\mu_{\ep,\eta}:=\sum_{i=1}^{n}z_i\delta_{x^{\delta_\ep,\eta}_i}\,,
\end{equation}
 we have that
 \begin{equation}\label{flareco}
 \mu_{\ep,\eta}\fla\mu\qquad\textrm{ as }\ep\to 0\,.
 \end{equation}
Now we prove that for every $0<s<1$ there exists a function $w_{\ep,\eta,s}\in \AS_{\ep}(\mu_{\ep,\eta})$ such that
\begin{equation}\label{lisu}
\begin{aligned}
\limsup_{\ep\to 0}\frac{1}{|\log\ep|}\precrw(w_{\ep,\eta,s},\Omega_\ep(\mu_{\ep,\eta}))\le &2\pi\Big((1-s\lambda)(m+\eta)+\lambda s\sqrt{\det\ho}\Big)|\mu|(\Omega)\,\\
&+\Big(2\pi\beta(1-s)\lambda+ 2\pi(1-s)\lambda\Big)|\mu|(\Omega).
\end{aligned}
\end{equation}
We fix $\rho>0$ such that $\overline B_{2\rho}(x^{\delta_\ep,\eta}_{i})\subset\Omega$ for every $i=1,\ldots,n$ and $\bar B_{2\rho}(x^{\delta_\ep,\eta}_i)\cap\bar B_{2\rho}(x^{\delta_\ep,\eta}_j)\neq\emptyset$ for every $i,j=1,\ldots,n$ with $i\neq j$\,. Let $\bar\rho:=\min\{\rho,\frac 1 2\}$\,.

Furthermore, for every $0<s<1$, we let $w_{\delta_\ep,s}^i\in \widetilde\Anew_{\delta_\ep^s,2\bar\rho}(z_i)$ be such that
\begin{equation}\label{zeroi}
F_{\delta_\ep}(w_{\delta_\ep,s}^i;A_{\delta_\ep^s,2\bar\rho})\le\inf_{w\in\widetilde\Anew_{\delta_\ep^s,2\bar\rho}}F_{\delta_\ep}(w;A_{\delta_\ep^s,2\bar\rho})+C\,.
\end{equation}

 By arguing as in the proof of Lemma \ref{lm:glim0}, we can write $w_{\delta_\ep,s}^i=e^{\iota u_{\delta_\ep,s}^i}$ for some function  $ u_{\delta_\ep,s}^i\in SBV^2(A_{\delta_\ep^s,2\bar\rho}(x_{i}))$ with  $u_{\delta_\ep,s}^i(\cdot)=z_{i}\theta(\cdot)$ on $\partial B_{\delta_\ep^s}\cup\partial B_{2\bar\rho}$ (where $\theta$ is defined in \eqref{deftheta}).

Let $\theta$ be the function defined in \eqref{deftheta}
and let $\Theta(\cdot):=\sum_{k=1}^n z_{k}\theta(\cdot-x_{k})$\,.
Let furthermore $\sigma:[\bar\rho,2\bar\rho]\to[0,1]$ be the function defined by $\sigma(r):=\frac{1}{\bar\rho}(r-\bar\rho)$\,. For every $i=1,\ldots,n$ we set
 \begin{equation*}
 u_{\ep,\eta,s}^i(x):=\left\{\begin{array}{ll}
 z_{i}\theta(x-x^{\delta_\ep,\eta}_{i})&\textrm{ if }x\in A_{\ep,\delta_\ep^s}(x^{\delta_\ep,\eta}_{i})\,,
\\
u_{\delta_\ep,s}^i(x)&\textrm{ if }x\in A_{\delta_\ep^s,\bar\rho}(x^{\delta_\ep,\eta}_{i})\,,
\\
(1-\sigma(|x-x^{\delta_\ep,\eta}_{i}|)z_{i}\theta(x-x^{\delta_\ep,\eta}_{i})+\sigma(|x-x^{\delta_\ep,\eta}_{i}|)\Theta(x)&\textrm { if }x\in A_{\bar\rho,2\bar\rho}(x^{\delta_\ep,\eta}_{i})\,,
 \end{array}\right.
 \end{equation*}
  and we define the function $w_{\ep,\eta,s}:\Omega_\ep(\mu_{\ep,\eta})\to\mathcal S^1$ as $w_{\ep,\eta,s}:=e^{\iota u_{\ep,\eta,s}}$ where
\begin{equation}\label{reccra2}
u_{\ep,\eta,s}(x)=\begin{cases}
u_{\ep,\eta,s}^i(x)&\textrm{ if }x\in A_{\ep,2\bar\rho}(x^{\delta_\ep,\eta}_{i})\textrm{ for some }i\\
\Theta(x)&\textrm{ elsewhere\,.}
\end{cases}
\end{equation}
Note that $w_{\ep,\eta,s}\in\AS_\ep(\mu_{\ep,\eta})$\,.

Let $i=1,\ldots,n$\,. By \eqref{simplf}, using the change of variable $x=\delta_\ep y+\delta_\ep\lfloor\frac{x_i}{\delta_\ep}\rfloor$ and the $1$-homogeneity of the function $a$, we have
\begin{eqnarray}\label{primoi}
\nonumber
\precrw(w_{\ep,\eta,s},A_{\ep,\delta_\ep^s}(x_i^{\delta_\ep,\eta}))&=&\int_{A_{\frac{\ep}{\delta_\ep},\delta_\ep^{s-1}}(y_\eta)}\frac{a(y)}{|y-y_\eta|^2}\ud y\\
&=&\int_{A_{\frac{\ep}{\delta_\ep},\delta_\ep^{s-1}}(y_\eta)\cap E_\eta}\frac{a(y)}{|y-y_\eta|^2}\ud y+\int_{A_{\frac{\ep}{\delta_\ep},\delta_\ep^{s-1}}(y_\eta)\setminus E_\eta}\frac{a(y)}{|y-y_\eta|^2}\ud y\\
\nonumber
&\le & 2\pi(m+\eta)\big(|\log\ep|-s|\log\delta_\ep|\big)+\beta \int_{A_{\frac{\ep}{\delta_\ep},\delta_\ep^{s-1}}(y_\eta)\setminus E_\eta}\frac{1}{|y-y_\eta|^2}\ud y\,.
\end{eqnarray}

We now estimate the last integral in \eqref{primoi}. To this end, let $\gamma\in(0,1)$\,. We note that
\begin{equation}\label{primoi0}
\int_{A_{\gamma,\delta_\ep^{s-1}}(y_\eta)\setminus E_\eta}\frac{1}{|y-y_\eta|^2}\ud y\le 2\pi\log\frac{\delta_\ep^{s-1}}{\gamma}=2\pi(1-s)|\log\delta_\ep|+2\pi|\log\gamma|\,.
\end{equation}
Let moreover $I:=\lceil \frac{|\log\ep|-|\log\delta_\ep|-|\log\gamma|}{\log 2}\rceil$\, and for every $i=0,1,\ldots,I$ we set $r_i:=2^i \frac{\ep}{\delta_\ep}$; then, using \eqref{den1}, we get
\begin{equation}\label{primoi1}
\begin{aligned}
 \int_{A_{\frac{\ep}{\delta_\ep},\gamma}(y_\eta)\setminus E_\eta}\frac{1}{|y-y_\eta|^2}\ud y
 \le& \sum_{i=1}^I \int_{A_{r_{i-1},r_i}(y_\eta)\setminus E_\eta}\frac{1}{|y-y_\eta|^2}\ud y\le \sum_{i=1}^I 4\frac{|B_{r_i}(y_\eta)\setminus E_\eta|}{r_i^2}\\
 \le&  \Big(\frac{|\log\ep|-|\log\delta_\ep|-|\log\gamma|}{\log 2}+1\Big)C_\eta(\gamma)\,,
 \end{aligned}
\end{equation}
where $\lim_{\gamma\to 0}C_\eta(\gamma)=0$ for every $\eta$\,.

By \eqref{primoi}, \eqref{primoi0} and \eqref{primoi1}, using \eqref{limlog0}, we deduce that
\begin{equation}\label{primoii}
\begin{aligned}
\limsup_{\ep\to 0}\frac{1}{|\log\ep|}\precrw(w_{\ep,\delta_\ep,s},A_{\ep,\delta_\ep^s}(x_i^{\delta_\ep,\eta})
\le 2\pi(1-s\lambda)(m+\eta)+2\pi\beta(1-s)\lambda+\beta\frac{1-\lambda}{\log 2} C_{\eta}(\gamma)\,.
\end{aligned}
\end{equation}
In addition, since $2\bar\rho \le 1$, by  \eqref{zeroi}, Lemma \ref{lm:glim0}, Proposition \ref{homdeg} (applied with $\ep=\delta_\ep$ and $s_2=s$) and \eqref{limlog0}, there exist moduli of continuity $\omega_1,\omega_2$ such that for every $i=1,\ldots,n$ we have
\begin{equation}\label{secondoi}
\begin{aligned}
&\frac{1}{|\log\ep|}\precrw(w_{\ep,\eta,s},A_{{\delta_\ep}^s,\bar\rho}(x^{\delta_\ep,\eta}_i))=\frac{|\log\delta_\ep|}{|\log\ep|}\frac{1}{|\log\delta_\ep|}\precrw(w_{\delta_\ep,s}^i,A_{{\delta_\ep}^s,\bar\rho}(x^{\delta_\ep,\eta}_i))\\
\le&(\lambda+\omega_1(\ep))\frac{1}{|\log\delta_\ep|}\inf_{w\in \Anew_{\delta_\ep^s,2\bar\rho}(x_{i})}\precrw(w; A_{\delta_\ep^{s},2\bar\rho}(x_{i}))+\omega_2(\ep)
\\
=&(\lambda+\omega_1(\ep))\Big(s-\frac{|\log(2\bar\rho)|}{|\log\delta_\ep|}\Big)2\pi\sqrt{\det\ho}+\omega_2({\ep})\,,
\end{aligned}
\end{equation}
where the last equality follows by \eqref{simplf} and \eqref{psisimplf}.
By \eqref{primoii}, \eqref{secondoi}, recalling \eqref{annrho} and \eqref{outann} we have that
\begin{equation*}
\begin{aligned}
\limsup_{\ep\to 0}\frac{1}{|\log\ep|}\precrw(w_{\ep,\eta,s},\Omega_\ep(\mu_{\ep,\eta}))\le &2\pi\Big((1-s\lambda)(m+\eta)+\lambda s  \sqrt{\det\ho}\Big)|\mu|(\Omega)\\
&+\Big(2\pi\beta(1-s)\lambda+ 2\pi(1-s)\lambda+\beta\frac{1-\lambda}{\log 2} C_{\eta}(\gamma)\Big)|\mu|(\Omega)\,,
\end{aligned}
\end{equation*}
whence, suitably choosing $\gamma=\gamma_\ep\to 0$ as $\ep\to 0$\,, we get \eqref{lisu}.
Therefore, suitably choosing $s_\ep\to 1$ and $\eta_\ep\to 0$ as $\ep\to 0$, by \eqref{lisu} we get that
$\mu_\ep=\mu_{\ep,\eta_\ep}\fla\mu$ and $w_\ep=w_{\ep,\eta_\ep,s_\ep}$ satisfies
\begin{equation*}
\limsup_{\ep\to 0}\frac{1}{|\log\ep|}F_{\delta_\ep}(w_\ep;\Omega_\ep(\mu_\ep))\le\limsup_{\ep\to 0}\frac{\precr(\mu_\ep)}{|\log\ep|}\le 2\pi\bigg(\Big(1-\lambda)\mathrm{ess}\inf a+ \lambda\sqrt{\det\ho}\bigg)|\mu|(\Omega)\,.
\end{equation*}
\qed
\subsection{The Ginzburg-Landau model}
Finally, we prove Theorem \ref{gltbw} in the more general setting introduced in Subsection \ref{sec:mainthmgl}.
\begin{theorem}\label{gldelta>ep}
Let $\pregl^W$ be defined in \eqref{GL0} where $a$ is a measurable $(0,1)^2$-periodic function satisfying $a(x)\in [\alpha,\beta]\subset(0,+\infty)$ for a.e. $x\in\R^2$.
Let moreover $\ho$ be the symmetric matrix defined in \eqref{defahom}.
If \eqref{limlog0} is satisfied,
then the following $\Gamma$-convergence result holds true.
\begin{itemize}
\item[(i)] (\,$\Gamma$-liminf inequality)
Let $\{v_\ep\}_\ep\subset H^1(\Omega;\R^2)$ be such that $J v_\ep\fla\pi\mu$ for some $\mu\in X(\Omega)$\,. Then
\begin{equation*}
\liminf_{\ep\to 0}\frac{\pregl^W(v_\ep)}{|\log\ep|}\ge 2\pi\bigg(\Big(1-\lambda)\mathrm{ess}\inf a+ \lambda\sqrt{\det\ho}\bigg)|\mu|(\Omega).
\end{equation*}
\item[(ii)](\,$\Gamma$-limsup inequality) For every $\mu\in X(\Omega)$, there exists a sequence $\{v_\ep\}_\ep\subset H^1(\Omega;\R^2)$ such that  $Jv_\ep\fla\pi\mu$ and
\begin{equation}\label{lsgl}
\limsup_{\ep\to 0}\frac{\pregl^W(v_\ep)}{|\log\ep|}\le2\pi\bigg(\Big(1-\lambda)\mathrm{ess}\inf a+ \lambda\sqrt{\det\ho}\bigg)|\mu|(\Omega)\,.
\end{equation}
\end{itemize}
\end{theorem}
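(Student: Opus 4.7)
The strategy is to reduce both inequalities to the already established core-radius statement (Theorem \ref{cradelta>ep}), following exactly the scheme used for Theorem \ref{mainthmgldopo} but invoking Proposition \ref{mainthmcranewdelta>ep} instead of Proposition \ref{mainthmcranew} in the lower bound, and the construction in the proof of Theorem \ref{cradelta>ep}(ii) with filled cores for the upper bound.

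\textbf{Lower bound.} Assume $\pregl^W(v_\ep)\le C|\log\ep|$ (otherwise there is nothing to prove) and, by density, that $v_\ep\in C^1$. Arguing exactly as in the proof of Theorem \ref{mainthmgldopo}(ii), I would use the coarea formula together with Young's inequality applied to $\alpha\int_\Omega |\nabla|v_\ep||^2 + \frac 1{\ep^2}W(|v_\ep|)\,\ud x$ to produce, for each small $\bar\tau>0$, thresholds $\gamma_1^\ep,\gamma_2^\ep\in (c\gamma^{\bar\tau},\gamma^{\bar\tau})$ such that the sublevel set $K_{\ep,\gamma_1^\ep,\gamma_2^\ep}:=\{|v_\ep|\notin (1-\gamma_1^\ep,1+\gamma_2^\ep)\}$ is covered by a finite family $\B'_\ep$ of disjoint balls with $\rad(\B'_\ep)\le C_{\bar\tau}\ep|\log\ep|$. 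Running the ball construction of Proposition \ref{ballconstr} one unit of time produces a new family $\B_\ep$, still with $\rad(\B_\ep)\le C_{\bar\tau}\ep|\log\ep|$, and on $\Omega(\B_\ep)$ the function $w_\ep:=v_\ep/|v_\ep|$ is well-defined in $H^1(\Omega(\B_\ep);\mathcal S^1)$. Setting $\mu_\ep:=\sum_{B\in\Ccal_\ep}\deg(v_\ep,\partial B)\delta_{x_B}$, Proposition \ref{sempre} combined with Lemma \ref{bc_lemma} yields $\mu_\ep\fla\mu$, while $|\mu_\ep|(\Omega)\le C_{\bar\tau}|\log\ep|$ is deduced from the ball construction lower bound together with $|v_\ep|\ge 1-\gamma^{\bar\tau}$ on $\Omega(\B_\ep)$. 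Using $|\nabla v_\ep|^2\ge |v_\ep|^2|\nabla w_\ep|^2\ge (1-\gamma^{\bar\tau})^2|\nabla w_\ep|^2$, we get
\begin{equation*}
\pregl^W(v_\ep)\ge (1-\gamma^{\bar\tau})^2\,\precr(\mu_\ep,\B_\ep).
\end{equation*}
Proposition \ref{mainthmcranewdelta>ep} then gives
\begin{equation*}
\liminf_{\ep\to 0}\frac{\pregl^W(v_\ep)}{|\log\ep|}\ge 2\pi(1-\gamma^{\bar\tau})^2\bigl((1-\lambda)\mathrm{ess}\inf a+\lambda\sqrt{\det\ho}\bigr)|\mu|(\Omega),
\end{equation*}
and letting $\bar\tau\to 0$ concludes the proof.

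\textbf{Upper bound.} Given $\mu=\sum_{i=1}^n z_i\delta_{x_i}$, by the standard factorization/sub-additivity reduction we may assume $|z_i|=1$ for all $i$. I take the recovery sequence $w_{\ep,\eta,s}\in\AS_\ep(\mu_{\ep,\eta})$ constructed in the proof of Theorem \ref{cradelta>ep}(ii), with $\mu_{\ep,\eta}$ given by \eqref{recose} (singularities displaced to near-minimum points of $a(\cdot/\delta_\ep)$ inside each $\delta_\ep$-cell), and extend it inside the small cores by the standard GL profile
\begin{equation*}
v_{\ep,\eta,s}(x):=\begin{cases}
w_{\ep,\eta,s}(x) &\text{if }x\in\Omega_\ep(\mu_{\ep,\eta}),\\[1mm]
\dfrac{|x-x_i^{\delta_\ep,\eta}|}{\ep}\Bigl(\dfrac{x-x_i^{\delta_\ep,\eta}}{|x-x_i^{\delta_\ep,\eta}|}\Bigr)^{z_i} &\text{if }x\in B_\ep(x_i^{\delta_\ep,\eta}).
\end{cases}
\end{equation*}
Then $v_{\ep,\eta,s}\in H^1(\Omega;\R^2)$ and $Jv_{\ep,\eta,s}=\pi\mu_{\ep,\eta}$, so by \eqref{flareco} the flat convergence $Jv_{\ep,\eta,s}\fla \pi\mu$ holds. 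Since $|v_{\ep,\eta,s}|=1$ outside the $\ep$-cores, the potential contribution is $O(\ep^2)$ by continuity of $W$, and the gradient contribution on each core is uniformly bounded (it reduces to a fixed scale-invariant Dirichlet integral for the canonical profile). Hence these terms are $\mathrm{o}(|\log\ep|)$, and the asymptotic cost of $\pregl^W(v_{\ep,\eta,s})/|\log\ep|$ coincides with the one computed for $\precrw(w_{\ep,\eta,s};\Omega_\ep(\mu_{\ep,\eta}))/|\log\ep|$ in Theorem \ref{cradelta>ep}(ii). A diagonal choice $\eta_\ep\to 0$, $s_\ep\to 1$ yields \eqref{lsgl}.

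\textbf{Main obstacle.} The delicate step is the lower bound: one must guarantee that the approximating atomic measures $\mu_\ep$ constructed from the degrees of $v_\ep$ on the balls of the ball construction satisfy simultaneously (a) the total variation bound $|\mu_\ep|(\Omega)\le C|\log\ep|$, (b) the radius bound $\rad(\B_\ep)\le C\ep|\log\ep|$, and (c) flat convergence to $\mu$. All three are needed to invoke Proposition \ref{mainthmcranewdelta>ep}, and the flat convergence in particular requires the joint use of Proposition \ref{sempre} (controlling the modified Jacobian) and Lemma \ref{bc_lemma} (comparing it with $\pi\mu_\ep$); this is what forces the technical bookkeeping with the thresholds $\gamma_i^\ep$ extracted from the potential term.
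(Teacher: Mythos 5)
Your proposal reproduces the paper's proof essentially verbatim: both the lower bound (coarea/Young to build the small covering balls, ball construction, $w_\ep=v_\ep/|v_\ep|$, reduction to $\precr$, invocation of Proposition \ref{mainthmcranewdelta>ep}, then $\bar\tau\to 0$) and the upper bound (the recovery maps $w_{\ep,\eta,s}$ from Theorem \ref{cradelta>ep}(ii), filled inside the $\ep$-cores by the canonical degree-$z_i$ profile, with the potential and core-gradient contributions estimated as $\mathrm{o}(|\log\ep|)$, followed by a diagonal choice of $\eta_\ep,s_\ep$). The only divergence is cosmetic and to your advantage: you correctly carry the factor $(1-\gamma^{\bar\tau})^2$, whereas the paper writes $(1-\gamma^{\bar\tau})$ (harmless, as both tend to $1$), and you correctly center the core profile at $x_i^{\delta_\ep,\eta}$ rather than $x_i$, fixing a small typo in the paper.
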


\noindent{\it Proof of {\rm(i)}.} Without loss of generality we can assume that $\pregl^W(w_\ep)\le C|\log\ep|$ and by standard density arguments we can also assume that $\{v_\ep\}_\ep\subset H^1(\Omega;\R^2)\cap C^1(\Omega;\R^2)$\,.

Let $\bar\tau\in (0,1)$\,. By arguing verbatim as in the proof of Theorem \ref{mainthmgldopo}(ii), one can prove that for every $\ep>0$\,, there exist $\gamma^{\bar\tau}>0$ with $\gamma^{\bar\tau}\to 0$ as $\bar\tau\to 0$\,, a family $\B_\ep$ of balls such that $\rad(\B_\ep)\le C_{\bar\tau}\ep|\log\ep|$ and
 \begin{equation}\label{alsolater2}
 1-\gamma^{\bar\tau}<|v_\ep|<1+\gamma^{\bar\tau}\qquad\textrm{in }\Omega(\B_\ep)\,,
 \end{equation}
and a measure $\mu_\ep$ with $\supp\mu_\ep\subset\bigcup_{B\in\B_\ep}B$ such that
$\mu_\ep\fla\mu$ as $\ep\to 0$\,.
For every $\ep>0$ we defined the function $w_\ep\in H^1(\Omega(\B_\ep); \mathcal S^1)$ as $w_\ep(x):=\frac{v_\ep(x)}{|v_\ep(x)|}$\,.
By \eqref{alsolater2} we get
\begin{equation}\label{ste62}
\begin{aligned}
\pregl^W(v_\ep)\ge&\int_{\Omega(\B_\ep)}a\Big(\frac{x}{\delta_\ep}\Big)|\nabla v_\ep|^2\ud x\ge(1-\gamma^{\bar\tau})\int_{\Omega(\B_\ep)}a\Big(\frac{x}{\delta_\ep}\Big)|\nabla w_\ep|^2\\
\ge&(1-\gamma^{\bar\tau})\precr(\mu_\ep,\Omega(\B_\ep))\,,
\end{aligned}
\end{equation}
where $\precr$ is defined in \eqref{cra_new} with $F_\delta$ defined in \eqref{aform}.
Since the assumptions of Proposition \ref{mainthmcranewdelta>ep} are satisfied, by \eqref{ste62} we have that
$$
\liminf_{\ep\to 0}\frac{\pregl^W(v_\ep)}{|\log\ep|}\ge (1-\gamma^{\bar\tau}) 2\pi\bigg(\Big(1-\lambda)\mathrm{ess}\inf a+ \lambda\sqrt{\det\ho}\bigg)|\mu|(\Omega)\,,
$$
whence the claim follows letting $\bar\tau\to 0$\,.
\vskip8pt

{\it Proof of {\rm(ii)}.}
By arguing as in  the proof of Theorem \ref{cradelta>ep} (iii), we may reduce to the case $|z_i|=1$ for every $i=1,\ldots,n$\,.
For every $0<\eta,s<1$\,, let $\mu_{\ep,\eta}$  be defined as in \eqref{recose} and let
$u_{\ep,\eta,s}$ be the function provided by \eqref{reccra2}\,.
Setting $\Omega_\ep(\mu_{\ep,\eta}):=\Omega\setminus\bigcup_{i=1}^n B_{\ep}(x_{i}^{\delta_\ep,\eta})$ with $x_{i}^{\delta_\ep,\eta}$ defined in \eqref{newsing}\,,
for every $\ep>0$ we define
 \begin{equation*}
v_{\ep,\eta,s}(x):=\begin{cases}
e^{\iota u_{\ep,\eta,s}(x)}&\textrm{ if }x\in\Omega_{\ep}(\mu_{\ep,\eta})\,,\\
\frac{|x-x_{i}|}{\ep}\Big(\frac{x-x_{i}}{|x-x_{i}|}\Big)^{z_{i}}&\textrm{ if }x\in B_{\ep}(x^{\delta_\ep,\eta}_{i})\textrm{ for some }i\,.
\end{cases}
\end{equation*}
We note that $v_{\ep,\eta,s}\in H^1(\Omega;\R^2)$ and that $Jv_{\ep,\eta,s}=\pi\mu_{\ep,\eta}$ for every $\ep>0$. In addition, for almost every $x\in\Om_\ep(\mu_{\ep,\eta})$ we have that $|v_{\ep,\eta,s}(x)|=1$, hence $W(|v_{\ep,\eta,s}(x)|)=0$.
By arguing as in \eqref{estcoresvep} and \eqref{estcoresgrad}, and using \eqref{lisu}, we thus obtain that
\begin{equation*}
\begin{aligned}
\limsup_{\ep\to 0}\frac{1}{|\log\ep|}\pregl^W(v_{\ep,\eta,s})\le &2\pi\Big((1-s\lambda)(m+\eta)+\lambda s \sqrt{\det\ho}\Big)|\mu|(\Omega)\,\\
&+\Big(2\pi\beta(1-s)\lambda+ 2\pi(1-s)\lambda\Big)|\mu|(\Omega)\,.
\end{aligned}
\end{equation*}
Therefore, suitably choosing $s_\ep\to 1$ and $\eta_\ep\to 0$ as $\ep\to 0$ and setting $v_\ep=v_{\ep,\eta_\ep,s_\ep}$\,, by \eqref{flareco} we have that $Jv_\ep\fla\mu$ as $\ep\to 0$ and that $\{v_\ep\}_\ep$ satisfies \eqref{lsgl}.
\qed
\begin{remark}\label{example}
\rm{
Note that if $\delta_\ep$ tends to zero much slower than $\ep$ in such a way that $\lambda=0$ in \eqref{limlog0}\,, then, within the $|\log\ep|$, scaling the homogenization process is not detected by
the $\Gamma$-limit in Theorems \ref{cradelta>ep} and \ref{gldelta>ep}, which in turns reduces to $2\pi\, \mathrm{ess}\inf a|\mu|(\Omega)$\,.
This is the case if $\lim_{\ep\to 0}\frac{\ep^p}{\delta_\ep}=0$ for all $p\in (0,1]$\,; for example, if $\delta_\e={1\over|\log\e|}$.
}
\end{remark}

\begin{example}\label{r-example}
\rm{
\red{We can give an explicit example, choosing $a$ piecewise constant on a checkerboard taking alternatively the values $\alpha$ and $\beta$. We have that $\ho=\sqrt{\alpha\beta}\,{\rm I}$ (see e.g.~\cite{JKO} Section 1.5), so that the corresponding $\Gamma$-limit is
$$2\pi\Big((1-\lambda)\alpha+\lambda\sqrt{\alpha\beta}\Big)|\mu|(\Omega),
$$
with $\lambda$ given by \eqref{limlog0}.
The limit has the same form if we choose $a$ as a laminate taking only the values $\alpha$ and $\beta$ with volume fraction $1/2$, whose homogenized matrix  $\ho$ has the eigenvalues ${\alpha+\beta\over 2}$ and $2\alpha\beta\over \alpha+\beta$ (see e.g.~\cite[ Section 12.2.2]{B02}).
}}
\end{example}

\end{document}